\newtheorem{theorem}{Theorem}[section]
\newtheorem{lemma}[theorem]{Lemma}
\newtheorem{proposition}[theorem]{Proposition}
\newtheorem{corollary}[theorem]{Corollary}
\theoremstyle{definition}
\newtheorem{remark}[theorem]{Remark}
\newtheorem{definition}[theorem]{Definition}
\newtheorem{claim}[theorem]{Claim}
\newcommand{\lexp}[1]{\vphantom{a}^{#1}}
\def\Z{\mathbb Z}
\def\N{\mathbb N}
\def\A{\mathcal A}
\def\pf{\begin{proof}}
\def\pfk{\end{proof}}
\def\le{\leqslant}
\def\leq{\leqslant}
\def\ge{\geqslant}
\def\geq{\geqslant}
\begin{document}


\title{ }

\begin{center}
\vskip 1cm{\LARGE\bf Minimal digit sets for parallel addition \\
\vskip .02in
in non-standard numeration systems}
\vskip 1cm
\large Christiane Frougny\\
LIAFA, CNRS UMR 7089\\
Case 7014, 75205 Paris Cedex 13\\
France\\
{\tt christiane.frougny@liafa.univ-paris-diderot.fr}\\
\ \\
Edita Pelantov\'a\\ 
Doppler Institute for Mathematical Physics and Applied Mathematics, and Department of Mathematics\\
Czech Technical University in Prague\\
Trojanova 13, 120 00 Praha 2\\
Czech Republic\\
{\tt edita.pelantova@fjfi.cvut.cz}\\
\ \\
Milena Svobodov\'a\\
Doppler Institute for Mathematical Physics and Applied Mathematics, and Department of Mathematics\\
Czech Technical University in Prague\\
Trojanova 13, 120 00 Praha 2\\
Czech Republic\\
{\tt milenasvobodova@volny.cz}\\
\end{center}

\vskip .2 in

\begin{center} {\em Dedicated to Jean-Paul Allouche for his sixtieth birthday} \end{center}

\vskip .2 in
\begin{abstract}

We study parallel algorithms for addition of numbers having finite
representation in a positional numeration system defined by a base
$\beta$ in $\mathbb{C}$ and a finite digit set $\mathcal{A}$ of
contiguous integers containing $0$. For a fixed base $\beta$, we
focus on the question of the size of the alphabet allowing to
perform addition in constant time independently of the length of
representation of the summands. We produce lower bounds on the
size of such alphabet $\mathcal{A}$. For several types of well
studied bases (negative integer, complex numbers $ -1 + \imath$,
$2 \imath$, and $\imath \sqrt{2}$, quadratic Pisot unit, and the
non-integer rational base), we give explicit parallel algorithms
performing addition in constant time. Moreover we show that digit
sets used by these algorithms are the smallest possible.

\end{abstract}



\section{Introduction}

Since the beginnings of computer science, the fact that addition
of two numbers has a worst case linear time complexity has been
considered as an important
 drawback, see in particular the seminal paper of Burks,
  Goldstine and Von Neumann \cite{BGVN}. In 1961, Avizienis
  gave a parallel algorithm to add two numbers: numbers are
   represented in base $10$ with digits from the set $\{-6,-5,\ldots,5,6\}$,
   which allows no carry propagation~\cite{Avizienis}. Note that
   already Cauchy in 1840 considered the representation of numbers
   in base $10$ and digit set $\{-5,\ldots,5\}$, and remarked that
   carries have little propagation, due to the fact that positive
   and negative digits are mutually cancelling in the addition process, ~\cite{Cauchy}.

Since the Avizienis paper, parallel addition has received a lot of
attention, because it is the core of some fast multiplication and
division algorithms, see for instance~\cite{ErcegovacLang}.
General conditions on the digit set allowing parallel
 addition in positive integer base can be found in \cite{Parhami} and
\cite{Kornerup}.

A positional numeration system is given by a base and by a set of
digits. The base $\beta$ is a real or complex number such that
$|\beta|>1$, and the digit set $\mathcal{A}$ is a finite alphabet
of real or complex digits. Non-standard numeration systems ---
where the base $\beta$ is not a positive integer --- have been
extensively studied. When $\beta$ is a real number $>1$, this is
the well known theory of the so-called $\beta$-expansions due to
R\'enyi \cite{Renyi} and Parry \cite{Parry}. Special attention has
been paid to complex bases, which allow to represent any complex
number by a single sequence (finite or infinite) of natural
digits, without separating the real and the imaginary part. For
instance, in the Penney numeration system every complex number can
be expressed with base $-1+\imath$ and digit set $\{0,1\}$,
\cite{Penney}. The Knuth~\cite{Knuth} numeration system is defined
by the base $2 \imath$ with digit set $\{0,\ldots,3\}$. Another
complex numeration system with digit set $\{0,1\}$ is based on
$\imath \sqrt{2}$, see~\cite{NM}.
\\

For designing a parallel algorithm for addition, some
\emph{redundancy} is necessary. In the Avizienis or Cauchy
numeration systems, numbers may have several representations. In
order to have parallel addition on a given digit set, there must
be \emph{enough} redundancy, see~\cite{Mazenc}
and~\cite{Kornerup}. Both the Avizienis and the Cauchy digit sets
allow parallel addition, but
the Avizienis digit set is not minimal for parallel addition, as the Cauchy digit set is.\\

When studying the question on which digit sets it is possible to
do addition in parallel for a given base $\beta$,  we restrict
ourselves to the case that the digit set is an alphabet of
contiguous integer digits containing $0$. This assumption already
implies that the base $\beta$ is an algebraic number. In a
previous paper~\cite{FrPeSv}, we have shown that it is possible to
find an alphabet of integer digits on which addition can be
performed in parallel when $\beta$ is an algebraic number with no
algebraic conjugates of modulus $1$. This digit set is not minimal
in
general, but the algorithm is quite simple, it is a kind of generalization of the Avizienis algorithm.\\

In this work we focus on the problem of finding an alphabet of
digits allowing parallel addition that is minimal in size. The
paper is organized as follows:

We first give lower bounds on the cardinality of the minimal
alphabet allowing parallel addition. When $\beta$ is a real
positive algebraic number, the bound is $\lceil \beta \rceil$.
When $\beta$ is an algebraic integer with minimal polynomial
$f(X)$, the lower bound is equal to $|f(1)|$. This bound can be
refined to $|f(1)|+2$ when $\beta$ is a real positive algebraic
integer.

Addition on an alphabet $\A$ can be seen as a digit set conversion
between alphabets $\A+\A$ and $\A$. In
Section~\ref{sec:conversion}, we show that the problem of parallel
addition on $\A$ can be reduced to problems of parallel digit set
conversion between alphabets of cardinality smaller than the one
of $\A+\A$, Proposition~\ref{conversion}. We also give some method
allowing to link parallel addition on several alphabets of the
same cardinality, namely to transform an algorithm for parallel
addition on one alphabet into algorithms performing parallel
addition on other alphabets.

We then examine some popular numeration systems, and show that our
bounds are attained. When $\beta$ is an integer~$\ge 2$, our bound
comes to $\beta +1$, and it is known that parallel addition is
feasible on any alphabet of this size, which is minimal,
see~\cite{Parhami} for instance.

In the case that the base is a negative  integer, $\beta= -b$, $b
\ge 2$, the lower bound we obtain is again equal to $b+1$. We show
that parallel addition is possible not only on the alphabet $\{0,
\ldots, b\}$, but in fact on any alphabet (of contiguous integers
containing~$0$) of cardinality $b+1$.

We then consider the more general case where  the base has the
form $\beta = \sqrt[k]{b}$, $b \in \mathbb{Z}$, $|b|\ge 2$, and $k
\in \N$, $k \ge 1$. We show that parallel addition is possible on
every alphabet (of contiguous integers containing~$0$) of
cardinality $b+1$. If $b \ge 2$, then this cardinality is minimal
(assuming that the expression of $\beta = \sqrt[k]{b}$ is written
in the minimal form). We use this result on several examples: The
complex base $\beta=-1+\imath$ satisfies $\beta^4=-4$, and the
minimal alphabet for parallel addition must have $5$~digits, in
fact it can be any alphabet (of contiguous integers
containing~$0$) of cardinality~$5$. With similar reasoning, also
for the Knuth numeration system, with base $\beta=2 \imath$,
parallel addition is doable on any alphabet (of contiguous
integers containing~$0$) of cardinality~$5$. Analogously, in base
$\beta = \imath \sqrt{2}$ parallel addition is doable on any
alphabet (of contiguous integers containing~$0$) of
cardinality~$3$.

We then consider $\beta$-expansions, where $\beta$ is  a quadratic
Pisot unit, i.e., the largest zero of a polynomial of the
form $X^2-aX+1$, with $a \in \N$, $a\ge 3$, or of a polynomial of
the form $X^2-aX-1$, with $ a \in \N$, $a\ge 1$. Such numeration
systems have been extensively studied, since they enjoy a lot of
nice properties. In particular, by a greedy algorithm, any
positive integer has a finite $\beta$-expansion, and it is known
that the set of finite $\beta$-expansions is closed under
addition~\cite{BFGK}. In the case $\beta^2=a\beta-1$, any positive
real number has a $\beta$-expansion on the alphabet $\{0, \ldots,
a-1\}$. We show that every alphabet (of contiguous integers
containing~$0$) of cardinality $a$ is sufficient to achieve
parallel addition, so the lower bound $|f(1)|+2$ is reached. In
the case $\beta^2=a\beta+1$, any positive real number has a
$\beta$-expansion on the alphabet $\{0, \ldots, a\}$. We show that
parallel addition is possible on any alphabet (of contiguous
integers containing~$0$) of cardinality $a+2$, which also achieves
our lower bound $|f(1)|+2$. In both cases, we provide explicitly
the parallel algorithms.

One case where the base is not an algebraic integer, but an
algebraic number, is the rational number $\pm a/b$, with $a >b \ge
2$. When $\beta=a/b$ our bound is equal to $\lceil a/b \rceil$,
which is not good enough, since we show that the minimal alphabet
has cardinality $a+b$. We prove that parallel addition is doable
on $\{0, \ldots,a+b-1\}$, on the opposite alphabet
$\{-a-b+1,\ldots,0\}$, and on any alphabet of cardinality $a+b$
containing $\{-b,\ldots,0, \ldots,b\}$. In the negative case,
$\beta=-a/b$, our results do not provide a lower bound. We show
that the minimal alphabet has cardinality $a+b$, and any alphabet
of such cardinality allows parallel addition.

The question of determining the size of minimal alphabet for
parallel addition in other numeration systems remains open.


\section{Preliminaries}\label{Prel}


\subsection{Numeration systems}

For a detailed presentation on these topics, the reader
 may consult~\cite{cant}. A \textit{positional numeration system}
 $(\beta,\mathcal{A})$ within the complex field $\mathbb{C}$ is
 defined by a \textit{base} $\beta$, which is a complex number
  such that $|\beta|>1$, and a \textit{digit set}  $\mathcal{A}$
  usually called \textit{alphabet}, which is a subset of $\mathbb{C}$.
  In what follows, $\mathcal{A}$ is finite and contains $0$. If a complex
  number $x$ can be expressed in the form  $ \sum_{j\leq n} x_j\beta^j$
  with coefficients $x_j$ in $\mathcal{A}$, we call the sequence
  $(x_j)_{j\leq n}$ a $(\beta, \mathcal{A})$\textit{-representation} of $x$.\\

The problem of representability in a complex base is far from
being completely characterized, see the survey~\cite{cant}.
However, when the base is a real number, the domain has been
extensively studied. The most elaborated case is the one of
representations of real numbers in a non-integer base $\beta >1$,
the so-called {\em greedy expansions}, introduced by R\'enyi
\cite{Renyi}. Denote by $T$  a transformation $T:[0,1) \rightarrow
[0,1)$  given by the prescription
$$T(x)= \beta x - D(x), \ \ \hbox{where $D(x) = \lfloor \beta x\rfloor$}. $$
Then
$$x =\frac{D(x)}{\beta} + \frac{T(x)}{\beta} \quad \hbox{for any}\ \  x \in [0,1).$$
Since $ T(x) \in [0,1)$  as well, we can  repeat this process
infinitely many times, and thereby obtain a representation of $x
\in [0, 1)$ in the form
\begin{equation}\label{EqExpansion}
x =\frac{D(x)}{\beta} +\frac{D(T(x))}{\beta^2} +
\frac{D(T^2(x))}{\beta^3} +  \frac{D(T^3(x))}{\beta^4}+ \cdots
\end{equation}
This representation is called the {\em R\'enyi expansion} or {\em
greedy expansion} of $x$ and denoted $\langle x \rangle_\beta$.
Since the coefficients are  $D(x) = \lfloor \beta x\rfloor$ and $x
\in [0,1)$, the alphabet  of the R\'enyi expansion is
$\mathcal{C}_\beta = \{ 0,1, \ldots, \lceil\beta \rceil -1\}$. We
will refer to this alphabet as the {\em canonical alphabet} for
$\beta >1$. A sequence $(x_j)_{j\geq 1}$ such that $\langle x
\rangle_\beta = 0\bullet x_1x_2 x_3\cdots$ for some $x \in [0,1)$
is  called {\em $\beta$-admissible}. If this sequence has only
finitely many non-zero entries, we say that $x$ has a {\em finite}
 R\'enyi expansion in the base $\beta$. Let us stress that not all
 strings in the alphabet $\mathcal{C}_\beta $ are $\beta$-admissible.
  For characterization of $\beta $-admissible sequences see \cite{Parry}.
  If  the base $\beta$ is not an integer, then some numbers have more
  than one representation on the canonical
alphabet $\mathcal{C}_\beta $. It is important to mention that the
R\'enyi expansion  $\langle x \rangle_\beta$ is lexicographically
the greatest among all representations  $(x)_\beta$ over
$\mathcal{C}_\beta $.

In order to find a representation of a number $x \geq 1$, we can
use the R\'enyi transformation $T$ as well: At first we find a
minimal $k\in \mathbb{N}$ such that
 $y=x\beta^{-k} \in [0,1)$, then we determine
 $\langle y\rangle_\beta = 0\bullet y_1y_2y_3 \cdots$ and finally
 we put $\langle x\rangle_\beta = y_1y_2 \cdots y_k\bullet y_{k+1}y_{k+2} \cdots$.
If the base $\beta$ is an integer, say $\beta =10$, then the
R\'enyi expansion is the usual decimal expansion. The R\'enyi
expansion of a negative real number $x$ is defined as
$-\langle|x|\rangle_\beta$, which means that one additional bit
for the sign $\pm$ is necessary. In the R\'enyi expansion of
numbers (analogously to the decimal expansion), the algorithms for
addition and subtraction differ.

Since the R\'enyi transformation $T$ uses the alphabet
$\mathcal{C}_\beta$, we can represent any positive real number $x$
in this alphabet  as an infinite word $x_nx_{n-1}\cdots x_0\bullet
x_{-1}x_{-2}\cdots $. The numbers represented by finite prefixes
of this word tend to number $x$.

Let us now consider an integer $m$ satisfying $m <0< m+ \lceil
\beta\rceil - 1$, and an alphabet $\mathcal{A}_m = \{m, \ldots, 0,
\ldots, m+ \lceil \beta\rceil - 1\}$ of cardinality $\lceil
\beta\rceil$. Let
$$J_m= \bigl[\tfrac{m}{\beta -1},\tfrac{m}{\beta -1}+1\bigr).$$
We describe a transformation $T_m:J_m\rightarrow J_m$  which
enables to  assign to any real number $x$ a $(\beta,
\mathcal{A}_m)$-representation. Put
$$T_m(x) = \beta x -D_m(x),  \ \ \hbox{where $ D_m(x) =\bigl\lfloor \beta x -\tfrac{m}{\beta -1}\bigr\rfloor$}.$$
Since $T_m(x) -  \tfrac{m}{\beta -1} =   \beta x -\tfrac{m}{\beta
-1} - \bigl\lfloor \beta x -\tfrac{m}{\beta -1}\bigr\rfloor \in
[0,1)$, we have $T_m(x) \in \bigl[\tfrac{m}{\beta
-1},\tfrac{m}{\beta -1}+1\bigr)$ for any $x$ in $J_m$, and
therefore $T_m$ maps the interval  $J_m$ into $J_m$. Moreover, any
$x$ from the interval $J_m$ satisfies
$$ \beta x - \tfrac{m}{\beta -1} < \beta \bigl( \tfrac{m}{\beta -1} +1 \bigr)
 - \tfrac{m}{\beta -1} =  m +\beta \quad {\rm and} \quad \beta x
  - \tfrac{m}{\beta -1} \geq \tfrac{m\beta}{\beta -1} - \tfrac{m}{\beta -1} = m, $$
and thus $m\leq \bigl\lfloor \beta x -\tfrac{m}{\beta -1}\bigr\rfloor
\leq  m+ \lceil \beta\rceil - 1$, i.e., the digit $D_m(x)$
belongs to $\mathcal{A}_m$. Therefore, each $x$ in $J_m$ can be
written as in \eqref{EqExpansion}. Since  for any $x$ in $\mathbb{R}$
there exists a power $n$ in $\mathbb{N}$ such that $\frac{x}{\beta^n}$
 is in $J_m$, all real numbers have a $(\beta, \mathcal{A}_m)$-representation.
  This already implies that the set of numbers having finite $(\beta, \mathcal{A})$-representation is dense in $\mathbb{R}$.\\

Let us mention that, if we consider an alphabet  $\mathcal{A} $
such that $\mathcal{A} = -\mathcal{A} $, we can exploit instead of
$T_m$ a symmetrized  version of the R\'enyi algorithm introduced
by Akiyama and Scheicher in \cite{AkiSch}. They use the
transformation $S:[-\tfrac12,\tfrac12 ) \rightarrow
[-\tfrac12,\tfrac12 )$ given by the prescription $$ S(x)= \beta x
- D(x), \ \ \hbox{where $D(x) = \lfloor \beta x + \tfrac12
\rfloor$}.$$ This expansion  has again the form
\eqref{EqExpansion}, but the digit set is changed into
$$\mathcal{A} = \mathbb{Z} \cap
(-\tfrac{\beta+1}{2},\tfrac{\beta+1}{2}).$$ Since the alphabet is
symmetrical around $0$, it has an odd number of elements. In
general, it can be  bigger than the canonical alphabet
$\mathcal{C}_\beta$, but not too much, because $\lceil\beta \rceil
+1 \geq \# \mathcal{A} \geq \lceil\beta \rceil =\#
\mathcal{C}_\beta$. On the other hand, the Akiyama-Scheicher
representation has an important advantage: the representation of
$-x$ can be obtained from the representation of $x$ by replacing
the digit $a $ by the digit $-a$.  Therefore, an algorithm for
subtraction can exploit an algorithm for addition, and clearly, no
additional bit for indicating the sign is needed.

A  more general construction including our $T_m$ is discussed in \cite{KaSt}.\\

In the case of base $\beta$ being a rational number of the form
$a/b$, with $a > b\ge 1$, $a$ and $b$ co-prime, the greedy
algorithm gives a representation on the alphabet $\{0, \ldots,
\lceil a/b \rceil -1\}$, but another algorithm --- a modification
of the Euclidean division algorithm --- gives any natural integer
a unique and finite expansion on the alphabet $\{0, \ldots, a
-1\}$, see~\cite{FrougnyKlouda} and \cite{AkiFroSaka}.
For instance, if $\beta=3/2$, the expansion of the number $4$ is $21$.\\

Also the negative bases have been investigated.  Already in 1885,
negative integer base was described by Gr\"unwald, see
\cite{Grunwald}. When $\beta$ is a real number,
$(-\beta)$-expansions have been introduced in \cite{IS}. Negative
rational bases of the form $\beta = -a/b$, with $a > b\ge 1$, $a$
and $b$ co-primes, have been studied in~\cite{FrougnyKlouda}. Any
integer can be given a unique and finite expansion on the alphabet
$\{0, \ldots, a-1\}$ by a modification of the Euclidean division
algorithm, so this system is a \emph{canonical numeration system},
see~\cite{cant} for properties and results.


\subsection{Parallel addition}

We consider addition and subtraction in the set  of real or
complex numbers  from an algorithmic point of view. Similarly to
the classical algorithms for arithmetical operations, we work only
on the set of numbers with \emph{finite} representation, i.e., on the set
\begin{equation}\label{Fin_A_beta}
{\rm Fin}_{\mathcal{A}}(\beta) = \Bigl\{ \ \sum_{j\in I}\
{x_j\beta^j} \mid I \subset \mathbb{Z},  \ \ I \ \ \hbox{finite},
\ \ x_j \in \mathcal{A}\Bigr\}.
\end{equation}
Such a finite sequence $(x_j)_{j\in I}$ of elements of
$\mathcal{A}$ is  identified with a bi-infinite string
$(x_j)_{j\in \mathbb{Z}}$ in $\mathcal{A}^\mathbb{Z}$, wherein
only a finite number of digits $x_j$ have non-zero value. The
index zero in bi-infinite strings is indicated by $\bullet$. So if
$x$ belongs to ${\rm Fin}_{\mathcal{A}}(\beta)$, we write $$
(x)_{\beta, \mathcal{A}} =  \lexp{\omega}0 x_n x_{n-1}\cdots
x_1x_0\bullet x_{-1}x_{-2} \cdots x_{-s} 0^\omega $$ with
$x=\sum_{j=-s}^{j=n}\ {x_j\beta^j}$.

 Let $x, y
\in {\rm Fin}_{\mathcal{A}}(\beta)$, with $(y)_{\beta,
\mathcal{A}} = \lexp{\omega}0 y_ny_{n-1} \cdots y_1y_0\bullet
y_{-1} y_{-2} \cdots y_{-s} 0^\omega $. Adding $x$ and $y$ means
to rewrite the $(\beta,\mathcal{A}+\mathcal{A})$-representation
$$\lexp{\omega}0(x_n+y_n) \cdots (x_1+y_1)(x_0+y_0)\bullet (x_{-1} +y_{-1})
 \cdots (x_{-s}+y_{-s}) 0^\omega$$ of the number $x+y$ into a $(\beta, \mathcal{A})$-representation of $x+y$.

A necessary condition for existence of an algorithm rewriting
finite $(\beta,
 \mathcal{A}+\mathcal{A})$-representations into finite $(\beta,
 \mathcal{A})$-representations is that the set ${\rm Fin}_{\mathcal{A}}(\beta)$ is closed under addition, i.e.,
\begin{equation}\label{closednessUnderAddition}
{\rm Fin}_{\mathcal{A}}(\beta)+{\rm
Fin}_{\mathcal{A}}(\beta)\subset {\rm Fin}_{\mathcal{A}}(\beta).
\end{equation}
Let us point out that we are not specifically discussing in this
 paper whether or not the inclusion (\ref{closednessUnderAddition})
 is satisfied by a numeration system $(\beta,\mathcal{A})$; however,
 the numeration systems studied in that paper satisfy this inclusion.\\

As we have already announced, we are interested in parallel
algorithms for addition. Let us mathematically formalize
parallelism. Firstly,
 we recall the notion of a {\em local function}, which comes from
 symbolic dynamics (see \cite{LM}) and is often called a {\em sliding block code}.

\begin{definition}\label{local}
A function $\varphi : \mathcal{A}^{\Z} \rightarrow
\mathcal{B}^{\Z}$  is said to be {\em $p$-local} if there exist
two non-negative integers $r$ and $t$ satisfying $p=r+t+1$, and a
function $\Phi:\mathcal{A}^p\rightarrow \mathcal{B}$ such that,
for any $u=(u_j)_{j \in \Z} \in \mathcal{A}^{\Z}$ and its image
$v=\varphi(u)=(v_j)_{j\in \Z} \in \mathcal{B}^{\Z}$, we have
$v_{j}=\Phi(u_{j+t} \cdots u_{j-r})$~\footnote{Careful! Indices of
$\Z$ are decreasing from left to right.} for every $j$ in $\Z$.
\end{definition}

This means that the image of $u$ by $\varphi$ is obtained through
a  sliding window of length $p$. The parameter $r$ is called the
\emph{memory} and the parameter $t$ is called the
\emph{anticipation} of the function $\varphi$. We also write that
$\varphi$ is $(t,r)$-local. Such functions, restricted to finite
sequences, are computable by a parallel algorithm in constant
time.

\begin{definition}\label{digitsetconv}
Given a base $\beta$ with $|\beta| >1$ and two alphabets
$\mathcal{A}$ and $\mathcal{B}$ containing $0$, a {\em digit set
conversion} in base $\beta$ from $\mathcal{A}$ to $\mathcal{B}$ is
a function $\varphi: \mathcal{A}^\Z \rightarrow \mathcal{B}^\Z $
such that
\begin{enumerate}
    \item for any $u=(u_j)_{j \in \Z} \in \mathcal{A}^\mathbb{Z}$ with a finite number
     of non-zero digits, $v=(v_j)_{j\in \Z} = \varphi(u)\in \mathcal{B}^{\Z}$ has  only a finite number of non-zero digits, and
    \item $\sum\limits_{j\in \mathbb{Z}} v_j\beta^j = \sum\limits_{j\in \mathbb{Z}} u_j\beta^j$.
\end{enumerate}
Such a conversion is said to be {\em computable in parallel} if it
is a $p$-local function for some $p \in \mathbb{N}$.
\end{definition}

Thus, addition on ${\rm Fin}_{\mathcal{A}}(\beta)$ is computable
in parallel if there exists a digit set conversion in base $\beta$
from $\mathcal{A}+\mathcal{A}$ to $\mathcal{A}$ which is
computable in parallel.
We are interested in the following question:\\

\centerline{\it Given a base $\beta \in \mathbb{C}$, which
alphabet  $\mathcal{A}$ allows parallel addition on ${\rm
Fin}_{\mathcal{A}}(\beta)$ ?}

\medskip

If we restrict ourselves to integer alphabets $\mathcal{A} \subset
\mathbb{Z}$, then the necessary
condition~\eqref{closednessUnderAddition} implies that $\beta$ is
an algebraic number, i.e., $\beta$ is a zero of a non-zero
polynomial with integer coefficients. In \cite{FrPeSv}, we have
studied a more basic question: For which algebraic number $\beta$
does there exist at least one alphabet allowing parallel addition?
We have proved the following statement.

\begin{theorem}\label{alphabetExists}
Let $\beta$ be an algebraic number such that $|\beta |>1$ and all
its conjugates in modulus differ from 1. Then there exists an
alphabet $\mathcal{A}\subset \mathbb{Z}$ such that addition on
${\rm Fin}_{\mathcal{A}}(\beta)$ can be performed in parallel.
\end{theorem}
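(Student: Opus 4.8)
The plan is to exhibit, for a suitable symmetric interval alphabet $\mathcal{A}=\{-M,\ldots,M\}\cap\Z$ with $M$ large enough, a digit set conversion from $\mathcal{A}+\mathcal{A}$ to $\mathcal{A}$ that is $p$-local, since by the reduction recalled above this is exactly what parallel addition on ${\rm Fin}_{\mathcal{A}}(\beta)$ requires. The whole construction rests on one algebraic freedom: choose a polynomial $p(X)=\sum_{i=0}^{d}a_iX^i$ with integer coefficients and $p(\beta)=0$. Then for every $j\in\Z$ and every $\lambda\in\Z$ the ``null word'' obtained by adding $\lambda a_i$ to the digit in position $j+i$, for $i=0,\ldots,d$, leaves the represented number $\sum_k w_k\beta^k$ unchanged, because $\sum_{i=0}^{d}a_i\beta^{j+i}=\beta^{j}p(\beta)=0$. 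Thus the conversion will take the form $v_k=w_k+\sum_{i=0}^{d}a_i\lambda_{k-i}$, and the task is to choose the integer corrections $\lambda_j$, \emph{locally} from the input $w$, so that every $v_k$ lands in $\mathcal{A}$.

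First I would separate the roots of $p$. Writing $p(X)=a_d\prod_i(X-\beta_i)$, the hypothesis that no conjugate of $\beta$ has modulus $1$ lets me split the factors into an ``expanding'' part collecting the roots with $|\beta_i|>1$, and a ``contracting'' part collecting those with $|\beta_i|<1$, with a definite spectral gap $\rho:=\max\bigl(\max_{|\beta_i|>1}|\beta_i|^{-1},\ \max_{|\beta_i|<1}|\beta_i|\bigr)<1$. The conversion is then performed in two passes. In the first pass, reading the word from least to most significant position, I use the relation coming from the expanding part to push any out-of-range digit upward: the induced carry is governed by the reciprocals of the expanding roots, so its effect on a digit $n$ positions away decays like $\rho^{\,n}$. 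In the second pass, reading in the opposite direction, I use the contracting part to damp the carries propagating downward, again with geometric factor $\rho$. Because both families of carries decay geometrically, their cumulative contribution to any single output digit is bounded by a constant depending only on $\beta$, $p$ and $\#\mathcal{A}$, so $M$ can be fixed once and for all.

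The main obstacle is to convert these analytic decay estimates into genuine \emph{locality}, i.e.\ a fixed window $p=r+t+1$ independent of the input length, and to do so with one alphabet serving both passes. Concretely I must show that the correction $\lambda_j$ needed at position $j$ may be computed, up to an error that the geometric decay keeps smaller than one half, from only a bounded number of neighbouring input digits; the tail beyond the window then contributes less than the slack built into $M$, so truncating it changes no digit's membership in $\mathcal{A}$. Verifying that the two truncated passes compose into a single sliding block code, that the resulting $v_k$ indeed all lie in $\mathcal{A}$, and that the chosen $\mathcal{A}$ makes ${\rm Fin}_{\mathcal{A}}(\beta)$ closed under addition, are the remaining points; all of them hinge on the quantitative gap $\rho<1$, which is precisely what the ``no conjugate on the unit circle'' hypothesis guarantees.
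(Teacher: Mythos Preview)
This theorem is not proved in the present paper; it is quoted from \cite{FrPeSv}, and the only information given here is that the argument is ``a kind of generalization of the Avizienis algorithm'' producing a symmetric alphabet $\{-a,\ldots,a\}$. The construction in \cite{FrPeSv} proceeds by first finding a polynomial $g(X)=\sum_{i}g_iX^i\in\Z[X]$ with $g(\beta)=0$ that has a \emph{strictly dominant coefficient}, i.e.\ an index $k$ with $|g_k|>\sum_{i\neq k}|g_i|$; the hypothesis that no conjugate lies on the unit circle is exactly what makes such a $g$ exist. With such a $g$ in hand, the conversion is a single Avizienis-type step: from each input digit $w_j$ one extracts an integer carry $q_j$ depending on $w_j$ alone, and sets $v_j=w_j-\sum_i g_i q_{j-k+i}$. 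Dominance of $g_k$ bounds $|v_j|$ uniformly, and the rule is manifestly $(\deg g+1)$-local.

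Your plan is genuinely different, and as written it has a gap that is not merely a matter of filling in details. You propose to split $p(X)=p_+(X)\,p_-(X)$ according to whether the roots lie outside or inside the unit circle, and then to ``use the relation coming from the expanding part'' in one pass and the contracting part in the other. But $p_+$ and $p_-$ do not have integer coefficients in general, so neither of them furnishes an integer rewriting rule on its own; the only integer relation available is the full $p(\beta)=0$, which mixes both kinds of roots. Consequently there is no well-defined ``first pass using only the expanding relation'' with integer carries, and the two-pass picture, which is already in tension with locality, does not get off the ground.

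Your underlying intuition---that the spectral gap $\rho<1$ forces geometric decay of carry influence---is exactly right, and it is ultimately the reason a dominant-coefficient polynomial exists. But the clean way to cash it in is the one-step route above: manufacture a single integer polynomial with a dominant coefficient (for instance by taking a suitable product or power so that the coefficient profile concentrates), and then write down an Avizienis-style local rule directly. That avoids the truncation-and-rounding argument you flag as ``the main obstacle,'' which you do not actually carry out and which, for discrete integer carries, is delicate to make rigorous.
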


The proof of this theorem is constructive, the obtained alphabet
has the form of
a symmetric set of contiguous integers $\mathcal{A} = \{- a, -a+1, \ldots, -1, 0, 1, \ldots, a-1, a\}$ and, in general, $a$ is not minimum.\\

In this article, we address the question of minimality of the
alphabet allowing parallel addition. In the whole text we assume:
\begin{itemize}
    \item the base $\beta$ is an algebraic number such that $|\beta|>1$,
    \item the alphabet $\mathcal{A}$  is a finite set of consecutive integers containing $0$ and $1$, i.e., $\mathcal{A}$ is of the form
\begin{equation}\label{alphabet}
\mathcal{A} = \{ m,  m+1, \ldots, 0, 1, \ldots M -1 ,M\}\,, \quad
\text{where} \ m \leq 0 < M \ \hbox{and} \  m, M \in \mathbb{Z}.
\end{equation}
\end{itemize}

\begin{remark}\label{inverse}
Despite the usual requirement that a base $\beta$ is in  modulus
bigger than one, we can define  the set ${\rm
Fin}_{\mathcal{A}}(\beta)$ even  in the case  $|\beta|<1$ and ask
whether addition in this set can be performed in parallel. Since
for any $\beta\in \mathbb{C}\setminus\{0\}$, it holds
$${\rm Fin}_{\mathcal{A}}(\beta) =  {\rm Fin}_{\mathcal{A}}(\tfrac{1}{\beta}),$$
a $p$-local function performing parallel addition can be  found
either for both the sets ${\rm Fin}_{\mathcal{A}}(\beta)$ and
${\rm Fin}_{\mathcal{A}}(\tfrac{1}{\beta})$, or for none of them.
\end{remark}

\begin{remark}\label{conjugate}
Let $\beta$ and $\gamma$ be two different  algebraic numbers with
the same minimal polynomial and $\sigma: \mathbb{Q}(\beta)\mapsto
\mathbb{Q}(\gamma)$ be the isomorphism induced by $\sigma(\beta) =
\gamma$. If $\mathcal{A} \subset \mathbb{Z}$,  then $${\rm
Fin}_{\mathcal{A}}(\gamma) = \{\sigma(x)\,| \,  x\in {\rm
Fin}_{\mathcal{A}}({\beta})\}$$ and, for any integers $a_j$,
$b_j$, $c_j$, and for any finite coefficient sets $I_1, I_2
\subset \mathbb{Z}$,
$$\sum_{j \in I_1} (a_j + b_j) \beta^j = \sum_{j \in I_2}
 c_j \beta^j \quad \Longleftrightarrow \quad \sum_{j \in I_1}
 (a_j + b_j) \gamma^j = \sum_{j \in I_2} c_j\gamma^j.$$
Therefore, a $p$-local function performing parallel addition
 exists either simultaneously for both the sets
 ${\rm Fin}_{\mathcal{A}}(\gamma)$ and ${\rm Fin}_{\mathcal{A}}(\beta)$,
 or for none of them.
\end{remark}


\section{Lower bounds on the cardinality of alphabet allowing parallelism}\label{secLowerBound}

In this section, we give two lower bounds on the cardinality  of
alphabet $\mathcal{A}$ allowing parallel addition in the set ${\rm
Fin}_{\mathcal{A}}(\beta)$.

\begin{theorem}\label{zdola2}
Let $\beta$ be a positive real algebraic number, $\beta >1$,  and
let $\mathcal{A}$ be a finite set of contiguous integers
containing $0$ and $1$. If addition in ${\rm
Fin}_{\mathcal{A}}(\beta)$ can be performed in parallel, then
$\#\mathcal{A} \geq \lceil\beta\rceil$.
\end{theorem}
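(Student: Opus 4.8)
The plan is to argue by contradiction, exploiting that $\#\mathcal{A}$ is an integer: since $\lceil\beta\rceil$ is the least integer $\geq\beta$, the desired inequality $\#\mathcal{A}\geq\lceil\beta\rceil$ is equivalent to $\#\mathcal{A}\geq\beta$. So I assume $\#\mathcal{A}=M-m+1<\beta$ and aim for a contradiction. The geometric meaning of this assumption is that the iterated function system (IFS) $\{x\mapsto(x+a)/\beta : a\in\mathcal{A}\}$ on $\mathbb{R}$ has total contraction ratio $\#\mathcal{A}/\beta<1$; its attractor $K=\{\sum_{j\geq 1}c_j\beta^{-j} : c_j\in\mathcal{A}\}$ (the set of ``fractional parts'' representable over $\mathcal{A}$) is therefore a compact set of Lebesgue measure zero. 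Concretely, the $n$-th iterate covers $K$ by at most $(\#\mathcal{A})^n$ intervals of length $\beta^{-n}|J|$, whose total length $(\#\mathcal{A}/\beta)^n|J|$ tends to $0$.

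The core idea is to confront this measure-zero output set with an input set that is forced to be a genuine interval. First I record that parallel addition composes: splitting each digit of $\{2^km,\dots,2^kM\}$ into a sum of $2^k$ digits of $\mathcal{A}$ (possible because $\mathcal{A}$ is contiguous) and then adding the resulting $2^k$ numbers of $\mathrm{Fin}_{\mathcal{A}}(\beta)$ by a depth-$k$ binary tree of parallel additions yields, for every $k$, a $p_k$-local value-preserving conversion from the contiguous alphabet $\mathcal{A}^{(k)}:=\{2^km,\dots,2^kM\}$ to $\mathcal{A}$; a composition of finitely many local maps is local. I then choose $k$ so large that $2^k(M-m)\geq\beta-1$, which is possible since $M-m\geq 1$.

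With this $k$ fixed, I compare the two attractors through the conversion. On the input side, the contiguous alphabet $\mathcal{A}^{(k)}$ has width $\geq\beta-1$, so the IFS pieces $\tfrac{1}{\beta}(J'+c)$, $c\in\mathcal{A}^{(k)}$, overlap and cover their common invariant interval $J'=[\tfrac{2^km}{\beta-1},\tfrac{2^kM}{\beta-1}]$; hence the attractor $K^{(k)}$ equals the whole interval $J'$ and has positive measure. On the output side, feeding the conversion a fractional input word supported on positions $-n,\dots,-1$ produces, by locality with memory $r$ and anticipation $t$, an $\mathcal{A}$-word supported on $[-n-t,\,r-1]$ of the same value; thus every value of $E_n:=\{\sum_{j=1}^{n}u_{-j}\beta^{-j} : u_{-j}\in\mathcal{A}^{(k)}\}$ lies in the fixed compact set $F:=P+K$, where $P=\{\sum_{j=0}^{r-1}z_j\beta^j : z_j\in\mathcal{A}\}$ is finite. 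Since $F$ is a finite union of translates of $K$, it still has measure zero; and since the finite truncations $\bigcup_n E_n$ are dense in $K^{(k)}$, we get $K^{(k)}\subseteq F$. This forces $0<\mu(K^{(k)})\leq\mu(F)=0$, the desired contradiction.

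The step I expect to be the main obstacle is precisely the need to widen the input alphabet. A naive attempt compares the attractor of $\mathcal{A}+\mathcal{A}$ (the alphabet actually arising in one addition) with that of $\mathcal{A}$; but when $\mathcal{A}$ is much smaller than $\beta$ the sum alphabet $\mathcal{A}+\mathcal{A}$ is still too narrow to cover an interval, so both attractors have measure zero and no contradiction appears from measure alone (and a dimension comparison is inconclusive because of possible overlaps). Iterating addition up the binary tree is what turns the small redundancy of a single addition into an input alphabet wide enough to fill an interval, while the output alphabet $\mathcal{A}$---and hence the measure-zero conclusion for $F$---never changes. The remaining care is bookkeeping: checking that the composed conversion is genuinely local, that $J'$ is invariant and is covered exactly when $2^k(M-m)\geq\beta-1$, and that the support bound $[-n-t,\,r-1]$ keeps all output values inside the $n$-independent set $F$.
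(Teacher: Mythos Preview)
Your argument is correct and follows a route parallel to, but genuinely different from, the paper's. Both proofs assume $\#\mathcal{A}<\beta$ and iterate the parallel addition to widen the input alphabet past the critical threshold, then derive a contradiction from the fact that the output is still written over the thin alphabet~$\mathcal{A}$. The paper, however, works on the \emph{integer side}: it considers the sets $Z_{\mathcal{B}}=\{\sum_{j=0}^{n}s_j\beta^{\,j}:s_j\in\mathcal{B}\}$, invokes a theorem of Erd\H{o}s and Komornik to conclude that $Z_{\{0,\dots,\lfloor\beta\rfloor\}}$ meets every unit interval in $(0,\infty)$, shows by a direct combinatorial argument on leading digits that $Z_{\mathcal{A}}$ has arbitrarily large gaps when $\#\mathcal{A}<\beta$, and obtains the contradiction by comparing gap sizes after the inclusion $Z_{\{0,\dots,q\}}\subset\beta^{-qp}Z_{\mathcal{A}}$. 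You instead work on the \emph{fractional side}, with attractors of the associated IFS, and your contradiction is measure-theoretic (positive versus zero Lebesgue measure) rather than metric (bounded versus unbounded gaps). What your approach buys is self-containment: the two IFS facts you use (a contractive system with total ratio $<1$ has a null attractor; an overlapping contiguous system has an interval as attractor) are elementary, so no external result like Erd\H{o}s--Komornik is needed. What the paper's approach buys is that it stays entirely within discrete combinatorics of digit strings, which matches the ambient style of the subject and avoids measure theory. The two arguments are essentially dual reformulations of the same geometric obstruction.
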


\begin{proof}
For any alphabet $\mathcal{B}$, denote
$$Z_{\mathcal{B}}=Z_{\mathcal{B}}(\beta):= \Bigl\{ \sum_{j=0}^n
s_j\beta^j\ |\ s_j \in \mathcal{B}, n \in \N\Bigr\}.$$ At first we
recall a result from \cite{ErKo}.  For an integer $q>0$, let
$\mathcal{Q}_q=\{0,1, \ldots, q\}$. Erd\"os and Komornik proved
the following: If $\beta \leq q+1$, then any closed interval
$[\alpha, \alpha +1]$ with $\alpha >0$ contains at least one point
from $Z_{\mathcal{Q}_q}$, i.e., $[\alpha, \alpha +1]\cap
Z_{\mathcal{Q}_q}\neq \emptyset$ for any $\alpha >0$.

We use the notation $m = \min \mathcal{A} \leq 0$ and $M = \max
\mathcal{A} \geq 1$.  Suppose, for contradiction, that $\#
\mathcal{A} = M-m+1 < \beta$. In particular, this assumption
implies that, for any $n \in \mathbb{N}$:
\begin{equation}\label{neigbour}
x_n := \beta^n + \sum_{j=0}^{n-1} m \beta^j > 0 \quad
\hbox{and}\quad y_n:= \sum_{j=0}^{n} M \beta^j < \beta^{n+1}.
\end{equation}
We can see that, for any $n \in \mathbb{N}$,  $y_n > x_n$, and,
additionally, since $x_n-y_{n-1} = \beta^n -
\sum_{j=0}^{n-1}(M-m)\beta^j >
\frac{\beta^{n}(\beta-M+m-1)}{\beta-1} >0$, we have $$ x_1 < y_1<
x_2< y_2 < x_3 < y_3 < x_4 < y_4 < \cdots$$ Consider an element
$x$ from $Z_{\mathcal{A}}=Z_{\mathcal{A}}(\beta)$.  It can be
written in the form $x = \sum_{j=0}^\ell a_j \beta^j$, with $a_j
\in \mathcal{A}$, where $a_\ell \neq 0$. If the leading
coefficient $a_\ell \leq -1$, then $x = \sum_{j=0}^\ell a_j
\beta^j \leq  -\beta^\ell + \sum_{j=0}^{\ell -1}M\beta^j$, and,
according to \eqref{neigbour}, the number $x$ is negative. It
means that any positive  element $x \in Z_{\mathcal{A}}$ can be
written as $x = \sum_{j=0}^\ell a_j \beta^j$, where $a_\ell \geq
1$, and, clearly,
$$x_\ell \leq x \leq y_\ell. $$
Thus, the intersection of $Z_{\mathcal{A}}$ with the  open
interval $(y_{n-1}, x_n )$ is empty for any $n \in \mathbb{N}$,
or, equivalently, $y_{n-1}$ and $x_n$ are the closest neighbors in
$Z_{\mathcal{A}}$. The gap between them is $x_n-y_{n-1}$, and it
tends to infinity with increasing $n$.

The existence of  a $p$-local function performing addition in
${\rm Fin}_{\mathcal{A}}(\beta)$ implies that, for any $x,y \in
Z_{\mathcal{A}}$, the sum $x+y$ has a $(\beta,
\mathcal{A})$-representation $x+y = \sum_{j=-p}^{n+p} z_j\beta^j$
with $z_j \in \mathcal{A}$, or, equivalently, $$Z_{\mathcal{A}} +
Z_{\mathcal{A}} \subset \frac{1}{\beta^p} Z_{\mathcal{A}}.$$ As $1
\in \mathcal{A}$, for any positive integer $q$ we obtain
\begin{equation}\label{gaps}
Z_{\mathcal{Q}_q} \subset \underbrace{Z_{\mathcal{A}}+   \cdots +
Z_{\mathcal{A}}}_{q\ {\rm times}} \subset
\frac{1}{\beta^{qp}}Z_{\mathcal{A}}
\end{equation}
Let us fix $q = \lfloor  \beta \rfloor$. Since $q+1 \geq \beta$,
then, according to the result of Erd\"os and Komornik, the gaps
between two consecutive elements in the set $Z_{\mathcal{Q}_q}$
are at most $1$. The set $\frac{1}{\beta^{qp}}Z_{\mathcal{A}}$ is
just a scaled copy of $Z_{\mathcal{A}}$ and thus
$\frac{1}{\beta^{qp}}Z_{\mathcal{A}}$ has arbitrary large gaps.
This contradicts  the inclusion~\eqref{gaps}.
\end{proof}

\begin{remark}
The inequality $ \#\mathcal{A}\geq \lceil \beta\rceil$  guarantees
that ${\rm Fin}_{\mathcal{A}}(\beta)$ is dense in $\mathbb{R}^+$
or in $\mathbb{R}$, depending on the fact whether the digits of
$\mathcal{A}$ are non-negative, or not. This property is very
important as it enables to approximate each positive real number
(resp. real number) by an element from ${\rm
Fin}_{\mathcal{A}}(\beta)$ with arbitrary accuracy.
\end{remark}

Using Remarks \ref{inverse} and \ref{conjugate} we can  weaken the
assumptions of Theorem \ref{zdola2}.

\begin{corollary}
Let $\beta$ be an algebraic number with at least one  positive
real conjugate (possibly $\beta$ itself) and let $\mathcal{A}$ be
an alphabet of contiguous integers containing $0$ and $1$. If
addition in ${\rm Fin}_{\mathcal{A}}(\beta)$ can be performed in
parallel, then
$$\#\mathcal{A} \geq \max \{ \lceil\gamma\rceil \,|\ \gamma \ \text{or} \  \gamma^{-1} \ \hbox{is a positive conjugate of }\ \beta\}.$$
\end{corollary}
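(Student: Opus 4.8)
The plan is to derive this corollary directly from Theorem~\ref{zdola2} by exploiting the two reduction principles already established in Remarks~\ref{inverse} and~\ref{conjugate}. The key observation is that both remarks assert an \emph{equivalence}: a $p$-local function performing parallel addition on ${\rm Fin}_{\mathcal{A}}(\beta)$ exists if and only if such a function exists on ${\rm Fin}_{\mathcal{A}}(\beta^{-1})$ (Remark~\ref{inverse}), and if and only if it exists on ${\rm Fin}_{\mathcal{A}}(\gamma)$ for any algebraic conjugate $\gamma$ of $\beta$ (Remark~\ref{conjugate}). Crucially, in both remarks the alphabet $\mathcal{A}$ is the \emph{same} on both sides, so the cardinality $\#\mathcal{A}$ is preserved under these transfers.

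The argument then runs as follows. Suppose addition in ${\rm Fin}_{\mathcal{A}}(\beta)$ can be performed in parallel, and let $\gamma$ be any algebraic number such that either $\gamma$ or $\gamma^{-1}$ is a positive real conjugate of $\beta$. First I would fix such a $\gamma$ and show that $\#\mathcal{A} \geq \lceil \gamma \rceil$. Let $\delta$ denote whichever of $\gamma$, $\gamma^{-1}$ is the positive real conjugate of $\beta$; since $\delta$ is a conjugate of $\beta$, Remark~\ref{conjugate} transfers the parallel addition algorithm on ${\rm Fin}_{\mathcal{A}}(\beta)$ to one on ${\rm Fin}_{\mathcal{A}}(\delta)$ with the same alphabet $\mathcal{A}$. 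If $\delta = \gamma$, then $\gamma$ is itself a positive real conjugate of $\beta$ with $\gamma > 1$ (a positive real conjugate in this context must have modulus exceeding $1$ for the bound to make sense, and $\lceil\gamma\rceil$ is the relevant quantity), and Theorem~\ref{zdola2} applied to the base $\gamma$ gives $\#\mathcal{A} \geq \lceil \gamma \rceil$ directly. If instead $\delta = \gamma^{-1}$, then Remark~\ref{inverse} further transfers the algorithm from ${\rm Fin}_{\mathcal{A}}(\delta)$ to ${\rm Fin}_{\mathcal{A}}(\delta^{-1}) = {\rm Fin}_{\mathcal{A}}(\gamma)$, again with the same alphabet, and I would apply Theorem~\ref{zdola2} to the base $\gamma$ to obtain $\#\mathcal{A} \geq \lceil \gamma \rceil$.

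Having established $\#\mathcal{A} \geq \lceil\gamma\rceil$ for \emph{each} qualifying $\gamma$ individually, I would simply take the maximum over all such $\gamma$: since the single fixed alphabet $\mathcal{A}$ must satisfy every one of these inequalities simultaneously, its cardinality is bounded below by their supremum, yielding
$$\#\mathcal{A} \geq \max \{ \lceil\gamma\rceil \,|\ \gamma \ \text{or} \  \gamma^{-1} \ \hbox{is a positive conjugate of }\ \beta\}.$$
The set over which the maximum is taken is finite, since $\beta$ has only finitely many conjugates, so the maximum is well defined.

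I expect the only delicate point to be bookkeeping around the direction of the reductions and the hypotheses of Theorem~\ref{zdola2}. Theorem~\ref{zdola2} requires its base to be a positive real algebraic number strictly greater than $1$, so I must confirm that in each case the base to which it is applied genuinely satisfies $\gamma > 1$ (equivalently, that a ``positive conjugate'' in the corollary's sense is understood to have modulus greater than $1$, consistent with the standing assumption $|\beta|>1$ and Remark~\ref{inverse}). The rest is a routine chaining of the two equivalences, and no new analytic content beyond Theorem~\ref{zdola2} is needed; the work is entirely in verifying that the alphabet is preserved at each step and that the hypotheses line up.
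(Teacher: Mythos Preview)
Your approach is correct and is exactly the one the paper intends: the corollary is stated immediately after the sentence ``Using Remarks~\ref{inverse} and~\ref{conjugate} we can weaken the assumptions of Theorem~\ref{zdola2},'' and you have carried out precisely that chaining of the two transfer principles followed by an application of Theorem~\ref{zdola2}. One small cleanup: your parenthetical remark that a ``positive conjugate'' should be understood to have modulus greater than $1$ is not quite right---conjugates of $\beta$ may well lie in $(0,1)$---but this causes no trouble, since whenever the resulting $\gamma$ satisfies $\gamma\le 1$ the inequality $\#\mathcal{A}\ge\lceil\gamma\rceil$ holds trivially (the alphabet contains $0$ and $1$), and only the case $\gamma>1$ requires Theorem~\ref{zdola2}.
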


When $\beta$ is an algebraic integer, and not only an algebraic
number, we can obtain another lower bound on the cardinality of
alphabet for parallelism:

\begin{theorem}\label{zdola}
Let $\beta$, with $|\beta| > 1$, be an algebraic integer of degree
$d$ with minimal polynomial $f(X) = X^d - a_{d-1}X^{d-1} -
a_{d-2}X^{d-2}- \cdots - a_1X-a_0$. Let $\mathcal{A}$ be an
alphabet of contiguous integers containing $0$ and $1$. If
addition in ${\rm Fin}_{\mathcal{A}}(\beta)$ is computable in
parallel, then $\# \mathcal{A} \geq |f(1)|$. If, moreover, $\beta$
is a positive real number, $\beta > 1$, then $\# \mathcal{A} \geq
|f(1)| +2$.
\end{theorem}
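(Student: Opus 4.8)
The plan is to exploit the same gap/density structure used in Theorem~\ref{zdola2}, but now combined with the algebraic-integer condition, which lets me evaluate representations modulo the ideal generated by $(\beta-1)$ or, more concretely, to substitute $X=1$ into the polynomial identity $f(\beta)=0$. The key observation is that if addition is $p$-local, then $\mathrm{Fin}_{\mathcal A}(\beta)$ is closed under addition and, more importantly, every element of $\mathcal{A}+\mathcal{A}$ (and by iteration every bounded multiple) must be rewritable in $\mathcal{A}$ using the relation $f(\beta)=0$, i.e.\ $\beta^d = a_{d-1}\beta^{d-1}+\cdots+a_0$. Evaluating any $(\beta,\mathcal A)$-representation identity at the algebraic level and then applying the ring homomorphism $\mathbb{Z}[\beta]\to\mathbb{Z}[\beta]/(\beta-1)\cong \mathbb{Z}/|f(1)|\mathbb{Z}$ (using $f(1)=1-a_{d-1}-\cdots-a_0$) shows that the digit sum of a representation is an invariant modulo $|f(1)|$.

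First I would make this invariant precise. For $x=\sum_j x_j\beta^j\in\mathrm{Fin}_{\mathcal A}(\beta)$, consider the reduction $\pi:\mathbb{Z}[\beta]\to\mathbb{Z}/|f(1)|\mathbb{Z}$ sending $\beta\mapsto 1$; then $\pi(x)\equiv\sum_j x_j \pmod{|f(1)|}$. The crucial point is that all $(\beta,\mathcal A)$-representations of the same number $x$ have digit sums congruent modulo $|f(1)|$, because two representations differ by a $\mathbb{Z}$-combination of relations $\beta^{k}f(\beta)=0$, each of which has digit sum $\equiv f(1)\equiv 0$. So $\pi$ is well defined on the represented values. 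Next I would argue that for $\#\mathcal A=M-m+1<|f(1)|$, the $M-m+1$ residues realized by the digits of $\mathcal{A}$ cannot cover all residue classes mod $|f(1)|$, yet parallel addition forces closure under addition of arbitrary elements whose values realize every residue class (since $1\in\mathcal A$ generates all of $\mathbb{Z}/|f(1)|\mathbb{Z}$ under iterated addition, mirroring the use of $\mathcal{Q}_q$ in the previous proof). This yields a number $x$ representable in $\mathcal{A}+\cdots+\mathcal{A}$ whose residue is not representable in $\mathcal A$, contradicting the $p$-local rewriting, which only permits trailing/leading shifts by $\beta^{\pm p}$ that do not alter $\pi$.

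For the improved bound $|f(1)|+2$ in the real positive case, I would return to the geometric argument of Theorem~\ref{zdola2}. There, density forced $\#\mathcal A\ge\lceil\beta\rceil$; here I would instead track both the arithmetic invariant modulo $|f(1)|$ and the two "boundary" constraints that a contiguous integer alphabet containing $0$ and $1$ must satisfy to achieve both closure under addition and the requisite redundancy near the endpoints $m$ and $M$. Concretely, when $\beta>1$ is real, one shows that the extreme digits cannot contribute to the residue count efficiently: the alphabet must contain enough negative and positive "slack" so that carries produced at the top digit (from the relation $f(\beta)=0$, whose coefficients sum to $1-f(1)$) can always be absorbed without propagation. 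Each of the two endpoints of the contiguous block forces one extra digit beyond the bare residue-covering requirement, giving the $+2$.

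The main obstacle I anticipate is making rigorous the claim that $p$-locality genuinely forces the existence of an element realizing a "missing" residue class — that is, transferring the clean modular invariant into a concrete contradiction with the sliding-window rewriting, rather than merely with abstract closure. Unlike the gap argument of Theorem~\ref{zdola2}, where arbitrarily large gaps in a scaled copy of $Z_{\mathcal A}$ contradict bounded gaps in $Z_{\mathcal Q_q}$, here I must verify that the residue obstruction survives the $\beta^{-p}$ rescaling (it does, since $\pi(\beta)=1$ is a unit, so scaling by $\beta^{-p}$ acts invertibly on residues) and that iterated addition really reaches every class. The $+2$ refinement is the most delicate: isolating exactly which two endpoint digits are forced, and proving they are \emph{not} already counted among the $|f(1)|$ residue representatives, will require a careful case analysis of the carry behavior at the top position, likely mirroring the strict inequalities $x_n-y_{n-1}\to\infty$ from the previous proof but now applied to the boundary digits rather than to the whole alphabet.
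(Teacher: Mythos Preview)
Your modular invariant $\pi:\mathbb{Z}[\beta]\to\mathbb{Z}/|f(1)|\mathbb{Z}$, $\beta\mapsto 1$, is exactly the right tool and is essentially what the paper uses. But the contradiction you propose from it does not work. You argue that if $\#\mathcal{A}<|f(1)|$ then the \emph{single digits} of $\mathcal{A}$ miss some residue class, while iterated addition of $1$ reaches every class, so some $x$ has a residue ``not representable in $\mathcal{A}$''. The problem is that a $(\beta,\mathcal{A})$-representation of $x$ has many digits, and its \emph{digit sum} (which is what $\pi$ sees) can hit every residue class regardless of how small $\mathcal{A}$ is --- for instance, strings $1^k$ already realise all classes. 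So there is nothing forcing the single-digit residues to cover $\mathbb{Z}/|f(1)|\mathbb{Z}$, and your contradiction evaporates.

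The missing idea is to feed the $p$-local function $\Phi$ a \emph{constant} input and look at the single output digit. The paper shows (by comparing two finite representations of the same number built from a long block $x^n$) that $\Phi(x^p)-x\in(\beta-1)\mathbb{Z}[\beta]$ for every $x\in\mathcal{A}+\mathcal{A}$; applying your $\pi$ then gives $\Phi(x^p)\equiv x\pmod{|f(1)|}$ as a congruence between two \emph{single integers}, one in $\mathcal{A}$ and one in $\mathcal{A}+\mathcal{A}$. Taking $x=M+1$, the digit $\Phi((M+1)^p)\in\mathcal{A}$ is $\le M$ and $\equiv M+1$, so it is $\le M+1-|f(1)|$; since it is also $\ge m$, one gets $M-m+1\ge|f(1)|$. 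This is the step your argument lacks: a mechanism that pins the congruence to one output digit rather than to a whole digit sum.

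For the $+2$ refinement your description (``endpoint digits are forced'', ``carry behaviour at the top position'') is too vague to constitute a plan. The paper's argument is again about constant inputs: using that $\beta>1$ is real, it proves by explicit growth estimates on two representations of the same number that $\Phi(M^p)\neq M$ and $\Phi(M^p)\neq m$. Hence $\Phi(M^p)$ lies strictly between $m$ and $M$, and combining this with $\Phi(M^p)\equiv M\pmod{|f(1)|}$ gives $M-|f(1)|\ge\Phi(M^p)\ge m+1$, i.e.\ $\#\mathcal{A}\ge|f(1)|+2$. Nothing resembling the gap sequence $x_n,y_n$ from Theorem~\ref{zdola2} is needed here.
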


Firstly, we prove several auxiliary statements with  less
restrictive assumptions on the alphabet than required in
Theorem~\ref{zdola}.

In order to emphasize that the used alphabet is not  necessarily
in the form \eqref{alphabet}, we will denote it by $\mathcal{D}$.
We suppose that addition in ${\rm Fin}_{\mathcal{D}}(\beta)$ is
performable in parallel, which means that there exists a $p$-local
function $\varphi: (\mathcal{D}+\mathcal{D})^{\Z} \rightarrow
\mathcal{D}^{\Z}$ with memory $r$ and anticipation $t$, and
$p=r+t+1$, defined by the function $\Phi :
(\mathcal{D}+\mathcal{D})^p\rightarrow \mathcal{D}$, as introduced
in Definitions~\ref{local} and \ref{digitsetconv}. We work in the
set $\mathbb{Z}[\beta] = \{ b_0+b_1\beta + b_2\beta^2+ \cdots +
b_{d-1}\beta^{d-1}\ |\  b_j \in \mathbb{Z}\}$. Since $\beta$ is an
algebraic integer, the set $\mathbb{Z}[\beta]$ is a ring.

Let us point out that the following claim does not assume that the
digits are integers:

\begin{claim}\label{divides1}
Let $\beta$ be an algebraic number, and let $\mathcal{D}$ be  a
finite set such that $0 \in \mathcal{D} \subset
\mathbb{Z}[\beta]$. Then, for any $x \in \mathcal{D} +
\mathcal{D}$, the number $\Phi(x^p)-x$  belongs to the set
$(\beta-1) \mathbb{Z}[\beta]$.
\end{claim}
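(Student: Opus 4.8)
The plan is to probe the local rule $\Phi$ by feeding it a long constant block and reading off what value--preservation forces. For a large integer $L$ I would take the input $u^{(L)}\in(\mathcal{D}+\mathcal{D})^{\mathbb{Z}}$ with $u^{(L)}_j=x$ for $0\le j\le L-1$ and $u^{(L)}_j=0$ otherwise (legitimate since $x\in\mathcal{D}+\mathcal{D}$ and $0\in\mathcal{D}+\mathcal{D}$, and the support is finite), and set $v^{(L)}=\varphi(u^{(L)})$. Because $\varphi$ is $(t,r)$-local with $v_j=\Phi(u_{j+t}\cdots u_{j-r})$, every window lying entirely inside the block reads $p$ copies of $x$, so $v^{(L)}_j=\Phi(x^p)=:c$ for all $r\le j\le L-1-t$. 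I would first note that $\Phi(0^p)=0$ (forced because the all--zero input has value $0$ and must map to a finite--support string), so $v^{(L)}$ has bounded support, vanishing for $j<-t$ and for $j>L-1+r$.

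Next I would split the output value into a bottom block of digits at indices $-t\le j\le r-1$, the constant interior, and a top block at indices $L-t\le j\le L-1+r$. The crucial point, and the first place care is needed, is that once $L\ge p$ each boundary block is produced by windows that straddle only one edge of the input, so the bottom digits and the top digits do not depend on $L$. Writing $A$ for the (fixed) bottom contribution and $G\beta^{L}$ for the top contribution (with $G$ fixed), the identity $\sum_j v^{(L)}_j\beta^j=\sum_j u^{(L)}_j\beta^j=x\frac{\beta^{L}-1}{\beta-1}$ becomes a single relation whose only $L$--dependence sits in explicit powers $\beta^{L}$ and $\beta^{L-t}$. Subtracting this relation for $L$ from the one for $L+1$ kills the $L$--independent terms and, after dividing by $(\beta-1)\beta^{L}$ (both nonzero since $|\beta|>1$), I expect to reach the clean equation $x=c\beta^{-t}+(\beta-1)G$.

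The last step converts this into the stated membership. Multiplying by $\beta^{t}$ gives $x\beta^{t}=c+(\beta-1)\beta^{t}G$; here $\beta^{t}G=\sum_i\gamma_i\beta^{t+i}$ with every exponent $t+i\ge 0$ and every $\gamma_i\in\mathcal{D}\subset\mathbb{Z}[\beta]$, so $\beta^{t}G\in\mathbb{Z}[\beta]$ and hence $c-x\beta^{t}\in(\beta-1)\mathbb{Z}[\beta]$. Since $\beta^{t}-1=(\beta-1)(\beta^{t-1}+\cdots+1)\in(\beta-1)\mathbb{Z}[\beta]$ and $x\in\mathbb{Z}[\beta]$, writing $c-x=(c-x\beta^{t})+x(\beta^{t}-1)$ yields $\Phi(x^p)-x=c-x\in(\beta-1)\mathbb{Z}[\beta]$. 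I expect the main obstacle to be the boundary bookkeeping in the middle paragraph: one must verify that the edge digits really are $L$--independent, and that after clearing the negative power $\beta^{-t}$ every surviving term genuinely lands in $\mathbb{Z}[\beta]$. This is precisely why one multiplies by $\beta^{t}$ and works with a block of constant length rather than reducing the raw value identity modulo $\beta-1$ directly, as the output may carry digits at negative indices.
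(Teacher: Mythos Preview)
Your argument is correct and essentially identical to the paper's: feed a long constant block of $x$'s, observe that the boundary digits are $L$-independent once $L\ge p$, subtract the value identities for two consecutive lengths to isolate $x\beta^t-\Phi(x^p)=(\beta-1)(\beta^tG)$, and then finish via $\beta^t-1\in(\beta-1)\mathbb{Z}[\beta]$. Your $\beta^tG$ is exactly the paper's $W=\sum_{j=1}^{p-1}\Phi(0^jx^{p-j})\beta^{j-1}$; the only discrepancies are cosmetic (the paper centers the block across the radix point rather than above it, and after subtracting you actually divide by $\beta^L$, not $(\beta-1)\beta^L$, to reach $x=c\beta^{-t}+(\beta-1)G$).
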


\begin{proof}
Let us denote $y: = \Phi(x^p)$. For any $n \in \mathbb{N}$, we
denote by $S_n$ the number represented by the string
\begin{equation}\label{number1}
\lexp{\omega}0 \underbrace{x\cdots x}_{t\ {\rm
times}}\,\underbrace{xxx\cdots xx x}_{n\ {\rm times}} \,\bullet
\,\underbrace{x\cdots x}_{r\ {\rm times}} 0^\omega.
\end{equation}
After the conversion by function $\Phi$, we obtain the  second
representation of the number $S_n$:
\begin{equation}\label{number2}
\lexp{\omega}0 w_{p-1}w_{p-2}\cdots w_2w_1\,\underbrace{yyy\cdots
yy y}_{n\ {\rm times}}\, \bullet\,
\widetilde{w}_{1}\widetilde{w}_2\cdots
\widetilde{w}_{p-1}0^\omega,
\end{equation}
where
\begin{equation}\label{number6}
w_j = \Phi(0^j x^{p-j}) \in \mathcal{D} \quad \hbox{ and }\quad
\widetilde{w}_j = \Phi(x^{p-j}0^j) \in \mathcal{D} \quad \hbox{
for} \quad j=1,2,\ldots, p-1\,.
\end{equation}
Put $W:= w_{p-1}\beta^{p-2}+ \cdots + w_2\beta + w_1$ and
 $\widetilde{W}:= \widetilde{w}_{1}\beta^{p-2}+ \cdots + \widetilde{w}_{p-2}\beta + \widetilde{w}_{p-1}$. Let
 us stress that neither $W$ nor $\widetilde{W}$ depend on $n$. Comparing both the representations (\ref{number1}) and (\ref{number2})
 of the number $S_n$, we obtain
$$ S_n =x \sum\limits_{j=-r}^{n+t-1}\beta^j = W \beta^n + y \sum\limits_{j=0}^{n-1}\beta^j + \widetilde{W}\beta ^{-p+1},$$
i.e.,
\begin{equation}\label{number4}
x \frac{\beta^{n+t}-1}{\beta-1} + x \sum\limits_{j=-r}^{-1}\beta^j
= W\beta^n + y\frac{\beta^{n}-1}{\beta-1} + \widetilde{W}
\beta^{-p+1} \quad \hbox{for any } \ n \in \mathbb{N}.
\end{equation}
Subtracting these equalities (\ref{number4}) for $n=\ell+1$ and
$n=\ell$, we get
\begin{equation}\label{number3}
x\beta^{\ell+t} = W\beta^{\ell+1} - W \beta^{\ell} + y\beta
^{\ell} \quad \Longrightarrow \quad x(\beta^t - 1) = W(\beta-1) +
y-x.
\end{equation}
Since $\beta^t - 1=(\beta-1)(\beta^{t-1} + \cdots+ \beta+1)$, the
number $y-x$ can be expressed in the form $(\beta-1)
\sum_{k=0}^mw'_k\beta^k$ with $w'_k \in \mathbb{Z}$.
\end{proof}

A technical detail concerning the value of $W$ in the course of
the previous proof  (Equation \eqref{number3}) will be important
in the sequel as well.  Let us point out this detail.

\begin{corollary}\label{doubleW} Let $\beta$ be an algebraic number,  $\mathcal{E}\subset \mathbb{Z}[\beta]$
and $\mathcal{D}\subset \mathbb{Z}[\beta]$ be two alphabets
containing $0$.  Suppose that there exists a $p$-local digit set
conversion $\xi: \mathcal{E}^\Z \rightarrow \mathcal{D}^\Z$
defined by the function $\Xi: \mathcal{E}^p \rightarrow
\mathcal{D}$, $p=r+t+1$. Then
$$\sum_{j=1}^{p-1}\Xi(0^j
x^{p-j})\beta^{j-1} = \frac{x\beta^t - \Xi(x^p)}{\beta -1}\qquad
\hbox{for any } \ \ x \in \mathcal{E}\,.$$
\end{corollary}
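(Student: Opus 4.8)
The plan is to re-run the argument from the proof of Claim~\ref{divides1} essentially verbatim, now for the conversion $\xi:\mathcal{E}^\Z\to\mathcal{D}^\Z$ and an arbitrary input digit $x\in\mathcal{E}$. The point to observe first is that the derivation in Claim~\ref{divides1} never used that the source alphabet was $\mathcal{D}+\mathcal{D}$: it relied only on $\varphi$ being $p$-local with memory $r$ and anticipation $t$, together with $0$ lying in the source alphabet. Hence the same computation applies word-for-word to any $p$-local digit set conversion whose source alphabet $\mathcal{E}$ contains $0$, and the corollary is simply the statement that makes explicit the value of the quantity $W$ that appeared in passing in \eqref{number3}.

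Concretely, I would fix $x\in\mathcal{E}$ and, for each $n\in\N$, feed to $\xi$ the bi-infinite string carrying $x$ in the positions $-r,\dots,n+t-1$ and $0$ elsewhere, exactly as in \eqref{number1}. By $p$-locality the image is the string \eqref{number2}, whose fringe digits are $w_j=\Xi(0^jx^{p-j})$ and $\widetilde w_j=\Xi(x^{p-j}0^j)$ as in \eqref{number6}, and whose $n$ central digits all equal $y:=\Xi(x^p)$ (the central windows of length $p$ see only copies of $x$). Writing $W:=\sum_{j=1}^{p-1}w_j\beta^{j-1}$, which is precisely the quantity whose value the corollary asserts, and $\widetilde W:=\sum_{j=1}^{p-1}\widetilde w_j\beta^{p-1-j}$, and equating the value $S_n$ of the string computed before and after conversion, I obtain an identity of the shape \eqref{number4} valid for every $n$, in which the right fringe contributes only the $n$-independent term $\widetilde W\beta^{-p+1}$.

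The key algebraic step is then to subtract this identity for $n=\ell+1$ from the one for $n=\ell$, exactly as in the passage from \eqref{number4} to \eqref{number3}. The $n$-independent terms (the left-hand constant $x\sum_{j=-r}^{-1}\beta^j$ and the right-hand $\widetilde W\beta^{-p+1}$) cancel, and after dividing by $\beta^\ell$ one is left with the clean relation $x\beta^t=W(\beta-1)+y$. Solving for $W$ yields $W=\frac{x\beta^t-\Xi(x^p)}{\beta-1}$, which is the claimed formula once $W$ is spelled out as $\sum_{j=1}^{p-1}\Xi(0^jx^{p-j})\beta^{j-1}$.

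I do not expect a genuine obstacle here beyond bookkeeping, since the corollary merely extracts the intermediate value of $W$ already implicit in \eqref{number3}. The only points worth checking carefully are the indexing of the left-fringe digits, so that the powers of $\beta$ in $W$ come out as $\beta^{j-1}$ (matching the assertion) rather than shifted, and the verification that the $\widetilde W$-term is genuinely independent of $n$ so that it disappears under the subtraction. Both are immediate consequences of the $p$-locality of $\xi$.
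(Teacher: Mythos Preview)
Your proposal is correct and matches the paper's approach exactly: the paper does not give a separate proof for this corollary, but simply remarks that it is the technical detail about the value of $W$ implicit in equation~\eqref{number3} from the proof of Claim~\ref{divides1}. Your write-up fills in precisely this, observing (as the paper tacitly assumes) that the argument used only $p$-locality and $0\in\mathcal{E}$, and then isolating $W=\frac{x\beta^t-\Xi(x^p)}{\beta-1}$ from the relation $x\beta^t=W(\beta-1)+y$ obtained after subtracting and dividing by $\beta^\ell$.
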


\begin{claim}\label{divides2}
Let $\beta$ be an algebraic integer and let $\mathcal{D}$ be a
finite set of (not necessarily contiguous) integers containing
$0$. Then
$$\Phi(x^p) \equiv x \mod |f(1)| \quad \hbox{  for any } \ x \in \mathcal{D}+\mathcal{D}.$$
\end{claim}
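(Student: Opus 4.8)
The plan is to leverage Claim~\ref{divides1} and then turn the membership statement it provides into an ordinary congruence of rational integers. The key observation specific to this claim is that, since now $\mathcal{D}\subset\mathbb{Z}$, both $x\in\mathcal{D}+\mathcal{D}$ and $\Phi(x^p)\in\mathcal{D}$ are rational integers, so the difference $\Phi(x^p)-x$ is itself a rational integer that merely happens to lie in the ideal $(\beta-1)\mathbb{Z}[\beta]$. The whole task thus reduces to showing that any rational integer lying in $(\beta-1)\mathbb{Z}[\beta]$ is divisible by $|f(1)|$.

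First I would invoke Claim~\ref{divides1} (legitimate since $\mathcal{D}\subset\mathbb{Z}\subset\mathbb{Z}[\beta]$ and $0\in\mathcal{D}$), obtaining $\Phi(x^p)-x=(\beta-1)g(\beta)$ for some $g(\beta)\in\mathbb{Z}[\beta]$. Next I would build the bridge to $f(1)$ via a ring homomorphism, and this is the step where the stronger hypothesis of Claim~\ref{divides2} — that $\beta$ is an algebraic \emph{integer}, not merely algebraic — gets used: because the minimal polynomial $f$ is then monic with integer coefficients, the evaluation map gives $\mathbb{Z}[\beta]\cong\mathbb{Z}[X]/(f(X))$. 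Consequently a ring homomorphism out of $\mathbb{Z}[\beta]$ is determined by any image of $\beta$ that annihilates $f$. I would define $\psi:\mathbb{Z}[\beta]\to\mathbb{Z}/|f(1)|\mathbb{Z}$ by $\beta\mapsto 1$; this is well defined exactly because $f(1)\equiv 0\pmod{|f(1)|}$, so the defining relation $f(\beta)=0$ is respected in the target.

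Applying $\psi$ then finishes everything. On one hand $\psi(\beta-1)=1-1=0$, so $\psi\bigl(\Phi(x^p)-x\bigr)=\psi(\beta-1)\,\psi(g(\beta))=0$. On the other hand $\psi$ restricted to $\mathbb{Z}\subset\mathbb{Z}[\beta]$ is just reduction modulo $|f(1)|$, so $\psi\bigl(\Phi(x^p)-x\bigr)$ is the residue of the rational integer $\Phi(x^p)-x$ modulo $|f(1)|$. Comparing the two evaluations yields $\Phi(x^p)\equiv x\pmod{|f(1)|}$, which is the assertion.

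I do not anticipate a genuine obstacle; the single point requiring care is the well-definedness of $\psi$, that is, verifying that sending $\beta\mapsto 1$ is compatible with $f(\beta)=0$, which amounts to the trivially true congruence $f(1)\equiv 0\pmod{|f(1)|}$. As a sanity check one may recall that $|f(1)|=\bigl|N_{\mathbb{Q}(\beta)/\mathbb{Q}}(\beta-1)\bigr|=\bigl[\mathbb{Z}[\beta]:(\beta-1)\mathbb{Z}[\beta]\bigr]$, so the quotient $\mathbb{Z}[\beta]/(\beta-1)\mathbb{Z}[\beta]$ really is a ring of order $|f(1)|$, isomorphic to $\mathbb{Z}/|f(1)|\mathbb{Z}$ via $\beta\mapsto 1$; this both confirms that $|f(1)|$ is the correct modulus and indicates that no stronger congruence can be squeezed out of Claim~\ref{divides1} by itself.
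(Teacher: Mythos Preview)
Your proof is correct and takes a genuinely different route from the paper's. Both arguments start the same way, invoking Claim~\ref{divides1} to write $\Phi(x^p)-x=(\beta-1)\,g(\beta)$ with $g(\beta)\in\mathbb{Z}[\beta]$; the divergence is in how they extract the integer congruence from this.

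The paper proceeds by brute-force coefficient comparison: it writes $g(\beta)=c_0+c_1\beta+\cdots+c_{d-1}\beta^{d-1}$, multiplies out $(\beta-1)g(\beta)$, reduces $\beta^d$ via the minimal polynomial, and uses linear independence of $1,\beta,\ldots,\beta^{d-1}$ over $\mathbb{Q}$ to get a system of $d$ linear equations in the $c_j$. Summing those equations telescopes to $\Phi(x^p)-x=-c_{d-1}f(1)$, which is the desired divisibility with the constant of proportionality made explicit. Your argument bypasses all of this by observing that the substitution $\beta\mapsto 1$ defines a ring homomorphism $\psi:\mathbb{Z}[\beta]\to\mathbb{Z}/|f(1)|\mathbb{Z}$ (well defined precisely because $f$ is monic and $f(1)\equiv 0$), and then simply applying $\psi$ to both sides. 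Your approach is cleaner and more conceptual; the paper's computation has the minor advantage of identifying the exact multiple of $f(1)$ involved, though that extra information is never used downstream. The sanity check you give at the end, identifying $|f(1)|$ with the index $[\mathbb{Z}[\beta]:(\beta-1)\mathbb{Z}[\beta]]$, is a nice structural explanation of why this modulus is sharp, and it is not something the paper makes explicit.
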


\begin{proof}
According to Claim \ref{divides1}, the number $\beta -1$ divides
the integer $\Phi({x^p}) - x = y-x$ in the ring
$\mathbb{Z}[\beta]$, i.e.,
$$ y-x = (\beta -1)(c_0+c_1\beta + \cdots + c_{d-1}\beta^{d-1}) \quad \hbox{for some }  \ \ c_0, c_1, \ldots, c_{d-1} \in \mathbb{Z}.$$
As $\beta^d =  a_{d-1}\beta^{d-1} + a_{d-2}\beta^{d-2}+ \cdots
+a_1\beta+a_0$ and powers $\beta^0, \beta^1, \beta^2, \ldots,
\beta^{d-1}$ are linearly independent over $\mathbb{Q}$, we deduce
\begin{eqnarray*}
y-x & = & -c_0 + c_{d-1}a_{0}\\
0 & = & c_0 - c_1 +c_{d-1}a_{1}\\
0 & = & c_1 -c_2 + c_{d-1}a_{2}\\
  & & \vdots\\
0 & = & c_{d-3} -c_{d-2}+ c_{d-1}a_{d-2}\\
0 & = & c_{d-2} -c_{d-1}+ c_{d-1}a_{d-1}\\
\end{eqnarray*}
Summing up all these equations, we obtain $$ y-x = -c_{d-1}(1-
a_{0}- a_{1} \cdots - a_{d-1})= -c_{d-1} f(1),$$ which implies
Claim~\ref{divides2}.
\end{proof}

The following claim again allows a more general alphabet, but the
base must be a positive real number.

\begin{claim}\label{divides3}
Let $\beta$ be a real algebraic number, $\beta >1$, and let
$\mathcal{D}$ be a finite set, such that $ 0\in \mathcal{D}
\subset \mathbb{Z}[\beta]$. Denote $\lambda = \min \mathcal{D}$
and $\Lambda = \max \mathcal{D}$. Then $\Phi(\Lambda^p) \neq
\lambda$ and $\Phi(\lambda^p) \neq \Lambda$.
\end{claim}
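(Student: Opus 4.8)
The plan is to argue by contradiction, recycling the single-letter block $S_n$ from the proof of Claim~\ref{divides1}. To show $\Phi(\Lambda^p)\neq\lambda$, suppose $\Phi(\Lambda^p)=\lambda$ and put $x=\Lambda$, $y=\Phi(\Lambda^p)=\lambda$; note $\Lambda\in\mathcal D\subseteq\mathcal D+\mathcal D$ (because $0\in\mathcal D$), so the construction \eqref{number1}--\eqref{number2} applies verbatim and yields identity \eqref{number4} for every $n\in\mathbb N$. Since both sides of \eqref{number4} have the shape $A\beta^n+B$ with $A,B$ independent of $n$ and $\beta\neq 1$, one may match the coefficient of $\beta^n$ and the $n$-independent part separately. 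Matching the $\beta^n$-coefficient just recovers $W=\tfrac{x\beta^t-y}{\beta-1}$, which is Corollary~\ref{doubleW}; substituting this back into \eqref{number4} and collecting the constant term isolates the \emph{left} boundary contribution
$$\widetilde{W}=\frac{y\beta^{p-1}-x\beta^{t}}{\beta-1},$$
where I have used $p-1=r+t$.

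The crucial point is that $\widetilde{W}=\sum_{j=1}^{p-1}\widetilde{w}_j\beta^{p-1-j}$ is a combination of the nonnegative powers $\beta^0,\ldots,\beta^{p-2}$ with digit coefficients $\widetilde{w}_j=\Phi(x^{p-j}0^j)\in\mathcal D$, so $\widetilde{w}_j\ge\lambda$. As $\beta>1$ makes every such power positive, I get $\widetilde{W}\ge\lambda\tfrac{\beta^{p-1}-1}{\beta-1}$. Inserting $x=\Lambda$, $y=\lambda$ into the displayed formula and comparing with this lower bound, the term $\lambda\beta^{p-1}$ cancels on both sides and only $\Lambda\beta^{t}\le\lambda$ survives. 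But $0\in\mathcal D$ forces $\Lambda\ge0\ge\lambda$, and $\beta^t>0$, so $\Lambda\beta^t\le\lambda$ can hold only if $\Lambda=\lambda=0$, i.e.\ $\mathcal D=\{0\}$. Discarding this trivial alphabet (for which the statement is literally false) gives the contradiction, proving $\Phi(\Lambda^p)\neq\lambda$.

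For $\Phi(\lambda^p)\neq\Lambda$ I would run the mirror argument: with $x=\lambda$, $y=\Phi(\lambda^p)=\Lambda$ the same formula for $\widetilde{W}$ holds, and now bounding $\widetilde{W}\le\Lambda\tfrac{\beta^{p-1}-1}{\beta-1}$ (using $\widetilde{w}_j\le\Lambda$) cancels the $\Lambda\beta^{p-1}$ terms and leaves $\Lambda\le\lambda\beta^{t}$, again impossible unless $\mathcal D=\{0\}$. Alternatively one can invoke the symmetry $\mathcal D\mapsto-\mathcal D$ with conversion $z\mapsto-\Phi(-z)$, which is $p$-local with the same memory and anticipation and interchanges the roles of $\Lambda$ and $\lambda$, so the second statement reduces to the first.

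The part I expect to be delicate is not the algebra but the choice of which boundary sum to exploit. The more obvious quantity $W$ (the right boundary, supplied directly by Corollary~\ref{doubleW}) carries a factor $\beta^{r}$ that refuses to cancel, and a bound on $W$ only settles the memoryless case $r=0$. The whole argument hinges on extracting instead the left boundary $\widetilde{W}$, where the dependence on the memory $r$ disappears and the clean, $n$- and $r$-free inequality $\Lambda\beta^t\le\lambda$ (respectively $\Lambda\le\lambda\beta^t$) emerges. Pinpointing this, together with remembering to exclude the degenerate one-point alphabet $\mathcal D=\{0\}$, are the only real subtleties.
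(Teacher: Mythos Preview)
Your argument is correct and follows essentially the same route as the paper: both proofs feed $x=\Lambda$, $y=\lambda$ into identity~\eqref{number4}, isolate the left-boundary sum $\widetilde{W}$, and derive a contradiction by bounding its digits $\widetilde{w}_j\ge\lambda$. The only cosmetic difference is that you first extract the closed form $\widetilde{W}=\tfrac{y\beta^{p-1}-x\beta^t}{\beta-1}$ and obtain the clean inequality $\Lambda\beta^t\le\lambda$, whereas the paper expands $\tfrac{1}{\beta-1}$ as a geometric series and reaches the equivalent inequality $-\Lambda\sum_{j\ge r+1}\beta^{-j}\ge -\lambda\sum_{j\ge p}\beta^{-j}$; your explicit handling of the degenerate alphabet $\mathcal{D}=\{0\}$ is a point the paper leaves implicit.
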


\begin{proof}
Firstly, let us assume that $\Phi(\Lambda^p) = \lambda$. Put
$x=\Lambda$ and $y=\lambda$ into \eqref{number4} and use
\eqref{number3} for determining $W$. We get
$$ \Lambda \frac{\beta^{n+t}-1}{\beta-1} + \Lambda \sum\limits_{j=-r}^{-1}\beta^j = \left( \Lambda \frac{\beta^{t}}{\beta-1} - \lambda \frac{1}{\beta-1} \right)\beta^n + \lambda\frac{\beta^{n}-1}{\beta-1} + \widetilde{W} \beta^{-p+1} .$$
After cancellation of the same terms on both sides, we have to
realize that $\frac{1}{\beta-1} = \sum
_{j=1}^{\infty}\frac{1}{\beta^j}$, all digits in $\widetilde{W}$
are at least $\lambda$, and our base $\beta >1$. Therefore, we
obtain
$$ -\Lambda \sum_{j=r+1}^{\infty} \frac{1}{\beta^j} = -\lambda \sum_{j=1}^{\infty}\frac{1}{\beta^j} + \sum _{j=1}^{p-1}\frac{\widetilde{w}_j}{\beta^j} \geq - \lambda \sum_{j=p}^{\infty}\frac{1}{\beta^j},$$
which yields a contradiction, as the left side is negative, but the right one is non-negative.\\
The proof of $\Phi(\lambda^p) \neq \Lambda$ is analogous.
\end{proof}

\begin{claim}\label{divides4}
Let $\beta$ be a real algebraic number, $\beta >1$, and let
$\mathcal{D}$ be a finite set, such that $ 0\in \mathcal{D}
\subset \mathbb{Z}[\beta]$. Denote $\lambda = \min \mathcal{D}$
and $\Lambda = \max \mathcal{D}$. Then $\Phi(\Lambda^p) \neq
\Lambda$. If, moreover, $\lambda \neq 0$ then $\Phi(\lambda^p)
\neq \lambda$.
\end{claim}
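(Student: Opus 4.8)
The plan is to argue by contradiction, assuming $\Phi(\Lambda^p)=\Lambda$ (and, for the second assertion, $\lambda<0$ with $\Phi(\lambda^p)=\lambda$). My first instinct is to re-run the machinery of Claims~\ref{divides1} and \ref{divides3}: feed the constant word of $\Lambda$'s into \eqref{number4} with $x=y=\Lambda$. However, this route stalls. With $x=y$ the term $\tfrac{y-x}{\beta-1}$ that powered the sign contradiction in Claim~\ref{divides3} vanishes, and after the usual cancellation one is left only with the two boundary identities $W=\Lambda\tfrac{\beta^t-1}{\beta-1}$ and $\widetilde W\beta^{-p+1}=\Lambda\sum_{j=-r}^{-1}\beta^j$. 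These are mutually consistent (for instance $\widetilde w_i=\Lambda$ for $i\le r$ and $\widetilde w_i=0$ otherwise satisfies the tail identity), so pure value-conservation of a single all-$\Lambda$ word yields no contradiction. Realizing that this naive approach fails, and that one must instead exploit the largest digit $2\Lambda$ of the \emph{input} alphabet $\mathcal{D}+\mathcal{D}$, is the main obstacle.

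The idea that works is a local perturbation. Fix $L\ge p$ and let $u$ be the word equal to $\Lambda$ on the positions $-L,\dots,L$ and $0$ elsewhere; let $u'$ be obtained from $u$ by changing the single digit at position $0$ from $\Lambda$ to $2\Lambda$. Both are finite, $(\mathcal{D}+\mathcal{D})$-admissible inputs, and $\sum_j u'_j\beta^j-\sum_j u_j\beta^j=(2\Lambda-\Lambda)\beta^0=\Lambda$. Since $\varphi$ preserves values (condition~2 of Definition~\ref{digitsetconv}), the outputs satisfy $\sum_j\big(\varphi(u')_j-\varphi(u)_j\big)\beta^j=\Lambda$. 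Now I invoke $p$-locality: $\varphi(u)$ and $\varphi(u')$ can differ only at positions $j$ whose window $[j-r,j+t]$ contains $0$, i.e.\ only for $-t\le j\le r$; and for every such $j$ the window of $u$ lies inside $[-L,L]$ and is therefore constant equal to $\Lambda$, so $\varphi(u)_j=\Phi(\Lambda^p)=\Lambda$ by assumption. Hence $\Lambda=\sum_{j=-t}^{r}\big(\varphi(u')_j-\Lambda\big)\beta^j$. But $\varphi(u')_j\in\mathcal{D}$ gives $\varphi(u')_j-\Lambda\le0$, while $\beta^j>0$ because $\beta>1$ is real, so the right-hand side is $\le0$, contradicting $\Lambda>0$. (The argument uses $\Lambda>0$; this holds whenever $\mathcal{D}$ contains a positive digit, in particular in the setting of Theorem~\ref{zdola} where $1\in\mathcal{A}$.)

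The second statement is completely symmetric. Assuming $\lambda<0$ and $\Phi(\lambda^p)=\lambda$, perturb the constant $\lambda$-word by replacing one digit $\lambda$ at position $0$ by $2\lambda=\min(\mathcal{D}+\mathcal{D})$; the value then changes by $\lambda<0$, while on the affected window every output digit satisfies $\varphi(u')_j-\lambda\ge0$, forcing the value change to be $\ge0$, a contradiction. The crux in both cases is identical: if an extreme digit were a fixed point of the sliding block code $\Phi$, then inserting the correspondingly extreme digit of $\mathcal{D}+\mathcal{D}$ could not be compensated in the output, because every neighbouring output digit is already pinned at that extreme value. I expect the only delicate bookkeeping to be the window-range inequalities (ensuring $L\ge p$ makes the relevant windows of $u$ constant), which are routine.
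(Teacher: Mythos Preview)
Your proof is correct and takes a genuinely different route from the paper's.

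The paper argues as follows: assuming $\Phi(\Lambda^p)=\Lambda$, it feeds in the word $\lexp{\omega}0\,\Lambda^{t}\bullet\Lambda^{r}(2\Lambda)^{q}\,0^\omega$ with a \emph{growing} block of $2\Lambda$'s, uses Corollary~\ref{doubleW} to pin down the left boundary block $W=\Lambda\tfrac{\beta^t-1}{\beta-1}$, evaluates $T_q$ in two ways, bounds the tail digits by $z_j\le\Lambda$, and extracts an inequality that fails for large~$q$. Your approach replaces this asymptotic argument by a single local perturbation: change one digit in the interior of a long $\Lambda$-block from $\Lambda$ to $2\Lambda$, observe that by $p$-locality the output can change only on the $p$ positions whose window meets the perturbed site, and that at each of those positions the unperturbed output is already pinned at the maximum $\Lambda$. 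Value conservation then forces a positive quantity $\Lambda$ to equal a nonpositive sum $\sum_{j=-t}^{r}(\varphi(u')_j-\Lambda)\beta^j$.

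Your version is shorter and conceptually cleaner: it avoids the explicit computation of $W$ via Corollary~\ref{doubleW} and the limit in~$q$, and it isolates the core obstruction (an extreme fixed digit leaves no room to absorb an extreme input) in one line. The paper's approach, on the other hand, reuses machinery already developed in Claims~\ref{divides1}--\ref{divides3} and does not need to introduce a second comparison word. Both proofs tacitly use $\Lambda>0$ (otherwise $\Lambda=0$ and $\Phi(0^p)=0$ trivially); you flag this explicitly, which is appropriate since the application in Theorem~\ref{zdola} has $1\in\mathcal{A}$.
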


\begin{proof}
We prove the claim by contradiction. Let us assume
$\Phi(\Lambda^p) = \Lambda$. For any $q\in \mathbb{N}$, denote
$T_q$ the number represented by
\begin{equation}\label{number7}
\lexp{\omega}0 \underbrace{\Lambda\cdots \Lambda}_{t\ {\rm
times}}\,\bullet \,\underbrace{\Lambda\cdots \Lambda}_{r\ {\rm
times}}\,\underbrace{(2\Lambda)(2\Lambda)\cdots
(2\Lambda)(2\Lambda)}_{q\ {\rm times}} \, 0^\omega.
\end{equation}
After conversion by the function $\Phi$, we get
\begin{equation}\label{number8}
\lexp{\omega}0 w_{p-1}w_{p-2}\cdots w_2w_1\,\bullet \,
z_1z_2\cdots z_{r+t+q} \, 0^\omega,
\end{equation}
where $w_j$ are defined in \eqref{number6} for $x=\Lambda$ and
$z_j \in \mathcal{D}$. The value $W=
\sum_{j=1}^{p-1}w_{j}\beta^{j-1}$ computed by \eqref{number3} is
now $W = \Lambda \frac{\beta^t-1}{\beta-1} = \Lambda
\sum_{j=0}^{t-1}\beta^j$. Using the representations
\eqref{number7} and \eqref{number8} for evaluation of the number
$T_q$,  and the fact that $z_j \leq \Lambda$ for any $j$, we
obtain
$$\Lambda \sum_{j=-r}^{t-1}\beta^j + (2\Lambda)\sum_{j=-r-q}^{-r-1}\beta^j =
 W + \sum_{j=1}^{r+t+q}z_j\beta^{-j} = \Lambda \sum_{j=0}^{t-1}\beta^j +
 \sum_{j=1}^{r+t+q}z_j\beta^{-j},$$
and thus
$$\Lambda \sum_{j=-r}^{-1}\beta^j +(2\Lambda)\sum_{j=-r-q}^{-r-1}\beta^j
\leq \Lambda \sum_{j=1}^{\infty}\beta^{-j} \quad \Longrightarrow \quad
\sum_{j=-r-q}^{-r-1}\beta^j \leq \sum_{j=q+r+1}^{\infty}\beta^{-j}.$$
Summing up  both sides of the last inequality, we get $\frac{1}{\beta^{q+r}}\,
\frac{\beta^q-1}{\beta-1}\leq \frac{1}{\beta^{q+r}}\,\frac{1}{\beta-1}$ for
all $q\in \mathbb{N}$, thus a contradiction.\\
The proof of $\Phi(\lambda^p) \neq \lambda$ is analogous.
\end{proof}

Now we can easily deduce the statement of Theorem \ref{zdola}:

\begin{proof}
Let $\mathcal{A} = \{m, m+1, \ldots, M-1, M\}$ be a set of
contiguous integers containing $0$ and $1$, i.e., $m \leq 0 <
M$.

Firstly, consider the base $\beta$ as any algebraic integer of
modulo greater than 1. If $|f(1)| =1$, there is nothing to prove.
Therefore, suppose now that $|f(1)| \geq 2$. Since $M+1 \in
\mathcal{A} + \mathcal{A}$, then, according to
Claim~\ref{divides2}, the digit $\Phi((M+1)^p)\leq M$ is congruent
to $M+1$ modulo $|f(1)|$. Therefore, necessarily, $M+1 - |f(1)|
\geq \Phi((M+1)^p) \geq m$. This implies the claimed inequality
$\# \mathcal{A} = M-m+1 \geq |f(1)|$.

Now suppose that $\beta>1$. According to Claims~\ref{divides3} and
\ref{divides4}, the digits $M$, $m$, and $\Phi(M^p)$ are distinct,
i.e., the alphabet $\mathcal{A}$ contains at least three
elements. Therefore, for the proof of $\# \mathcal{A}\geq |f(1)| +
2$, we can restrict ourselves to the case $|f(1)|\geq 2$. As $M
>\Phi(M^p) > m$ and $\Phi(M^p) \equiv M \mod |f(1)|$, we have $M -
|f(1)|\geq \Phi(M^p)\geq m+1$. It implies the second part of the
claim, namely that $\#\mathcal{A} = M-m+1 \geq |f(1)| + 2$.
\end{proof}

\bigskip
The assumptions of the previous Claims~\ref{divides2},
\ref{divides3}, and \ref{divides4} are much more relaxed than the
assumptions of Theorem~\ref{zdola}. Therefore, modified statements
can be proved as well. For instance, the following result holds.

\begin{proposition}
Given $\beta>1 $ an algebraic integer with minimal polynomial
$f(X)$, let $\mathcal{D}$ be a finite set of (not necessarily
contiguous) integers containing $0$, such that $\gcd \mathcal{D}
=1$ and $\min \mathcal{D} <0 < \max \mathcal{D}$. If addition in
${\rm Fin}_{\mathcal{D}}(\beta)$ is computable in parallel, then
$\#\mathcal{D} \geq |f(1)|+2$.
\end{proposition}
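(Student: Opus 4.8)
The plan is to refine the counting in the proof of Theorem~\ref{zdola} by keeping track of residues modulo $N := |f(1)|$. Write $\pi:\mathbb{Z}\to\mathbb{Z}/N\mathbb{Z}$ for the reduction map. Since $\mathcal{D}$ is a finite set of integers containing $0$ and $\beta$ is an algebraic integer, Claim~\ref{divides2} applies and gives, for every $x\in\mathcal{D}+\mathcal{D}$, an element $\Phi(x^p)\in\mathcal{D}$ with $\Phi(x^p)\equiv x \pmod{N}$. Reading this modulo $N$ yields $\pi(x)\in\pi(\mathcal{D})$ for all $x\in\mathcal{D}+\mathcal{D}$, that is, $\pi(\mathcal{D})+\pi(\mathcal{D})\subseteq\pi(\mathcal{D})$. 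So $\pi(\mathcal{D})$ is a finite subset of the group $\mathbb{Z}/N\mathbb{Z}$ that contains $0$ and is closed under addition; in a finite abelian group this forces $\pi(\mathcal{D})$ to be a subgroup.

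Next I would use the hypothesis $\gcd\mathcal{D}=1$. It provides integers $c_i$ and digits $d_i\in\mathcal{D}$ with $\sum_i c_i d_i = 1$; reducing modulo $N$ and using that the subgroup $\pi(\mathcal{D})$ is closed under integer multiples and under addition shows $1\in\pi(\mathcal{D})$. A subgroup of $\mathbb{Z}/N\mathbb{Z}$ containing the generator $1$ is the whole group, so $\pi(\mathcal{D})=\mathbb{Z}/N\mathbb{Z}$. Thus $\mathcal{D}$ meets every one of the $N$ residue classes modulo $N$, which already yields $\#\mathcal{D}\ge N=|f(1)|$.

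To gain the extra two elements I would exploit that $\beta>1$ is real, so Claims~\ref{divides3} and \ref{divides4} apply with $\lambda=\min\mathcal{D}<0$ and $\Lambda=\max\mathcal{D}>0$. Since $0\in\mathcal{D}$, both $\Lambda$ and $\lambda$ lie in $\mathcal{D}+\mathcal{D}$, so $\Phi(\Lambda^p),\Phi(\lambda^p)\in\mathcal{D}$ with $\Phi(\Lambda^p)\equiv\Lambda$ and $\Phi(\lambda^p)\equiv\lambda\pmod N$. Claim~\ref{divides4} gives $\Phi(\Lambda^p)\neq\Lambda$ and (as $\lambda\neq0$) $\Phi(\lambda^p)\neq\lambda$, so the fiber of $\pi$ over $\pi(\Lambda)$ contains the two distinct elements $\Lambda,\Phi(\Lambda^p)$, and the fiber over $\pi(\lambda)$ contains the two distinct elements $\lambda,\Phi(\lambda^p)$. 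I would then split into two cases. If $\pi(\Lambda)\neq\pi(\lambda)$, these are two distinct fibers each of size $\ge 2$, while each of the remaining $N-2$ residue classes contributes at least one element, giving $\#\mathcal{D}\ge 2+2+(N-2)=N+2$. If $\pi(\Lambda)=\pi(\lambda)$, then $\Lambda,\lambda,\Phi(\Lambda^p)$ all lie in a single fiber and are pairwise distinct — here one also invokes $\Phi(\Lambda^p)\neq\lambda$ from Claim~\ref{divides3} — so this fiber has size $\ge 3$ while the other $N-1$ classes contribute at least one each, again giving $\#\mathcal{D}\ge 3+(N-1)=N+2$.

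The genuinely new ingredient compared with Theorem~\ref{zdola} is the passage from a statement about the extreme digits $M,m$ to a statement about all residue classes: the subgroup argument, which replaces the interval-counting valid only for contiguous alphabets. The step I expect to require the most care is verifying that $\pi(\mathcal{D})$ really is a subgroup (submonoid plus finiteness) and that $\gcd\mathcal{D}=1$ forces it to be the full group; once full residue coverage is established, the extra $+2$ follows from the same extremal-digit obstructions (Claims~\ref{divides3} and~\ref{divides4}) used in the contiguous case, now organized by residue class.
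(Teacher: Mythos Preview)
Your proof is correct. The paper does not actually supply a proof of this proposition; it merely remarks, immediately after proving Theorem~\ref{zdola}, that Claims~\ref{divides2}, \ref{divides3}, and \ref{divides4} have weaker hypotheses than the theorem and that ``modified statements can be proved as well,'' then states the proposition without argument. Your residue-class approach is precisely the right substitute for the interval-counting that the contiguous case permits: from Claim~\ref{divides2} you correctly extract that $\pi(\mathcal{D})$ is closed under addition in $\mathbb{Z}/N\mathbb{Z}$, hence a subgroup, and the hypothesis $\gcd\mathcal{D}=1$ forces it to be the whole group, yielding full residue coverage. The extra $+2$ then comes from the same extremal obstructions as in the contiguous case, and your case split on whether $\pi(\Lambda)=\pi(\lambda)$, together with the use of Claim~\ref{divides3} to separate $\Phi(\Lambda^p)$ from $\lambda$ in the coincident case, is exactly what is needed. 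This is presumably the argument the authors had in mind; in any event it is a clean and complete proof.
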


\begin{remark}
Exploiting Remarks~\ref{inverse} and \ref{conjugate}, we may also
strengthen Theorem~\ref{zdola}.
\begin{enumerate}
    \item If a polynomial $f(X)\in \mathbb{Z}[X]$ of degree $d$
     is the minimal polynomial of $\beta$, then $g(X)=X^df(\frac{1}{X})$
     is the minimal polynomial of $\frac{1}{\beta}$, and, moreover, $f(1) = g(1)$.
     Therefore, the assumption ``$\beta$ is an algebraic integer" in
     Theorem \ref{zdola} can be replaced by ``$\beta$ or $\frac{1}{\beta}$ is an algebraic integer".
    \item Even the second part of Theorem \ref{zdola} can be applied
    to a broader class of bases. The lower bound $\#\mathcal{A}
    \geq |f(1)|+2$ remains valid even if $\beta$ is an algebraic
    integer and one of its conjugates is a positive real number greater than 1.
\end{enumerate}
\end{remark}


\section{Addition versus subtraction and conversion}\label{sec:conversion}

As we have already mentioned, addition in the set ${\rm
Fin}_{\mathcal{A}}(\beta)$ can be interpreted as a digit set
conversion from alphabet $\mathcal{A}+\mathcal{A}$ into alphabet
$\mathcal{A}$. Let us point out that, if addition of two numbers
can be performed in parallel, then addition of three numbers can
be done in parallel as well, and the same holds for any fixed
number of summands. This implies that, if $\{-1,0,1\} \subset
\mathcal{A}$, then subtraction of two numbers from ${\rm
Fin}_{\mathcal{A}}(\beta)$ can be viewed as addition of fixed
numbers of summands, and therefore, no special study of
parallelism for subtraction of $(\beta,
\mathcal{A})$-representations is needed.

On the other hand, if the elements of $ \mathcal{A}$ are
non-negative and the base $\beta$ is a real number greater than
$1$, then the set ${\rm Fin}_{\mathcal{A}}(\beta) \subset [0,
+\infty)$ is not closed under subtraction. We may investigate only
the existence of a parallel algorithm for subtraction $y-x$ for
$y\geq x$. But even if $ {\rm Fin}_{\mathcal{A}}(\beta)$ is closed
under subtraction of $y-x$ for $y\geq x$, it is not possible to
find any parallel algorithm for it. Let us explain why: Suppose
that subtraction is a $p$-local function $\varphi$. Then $\varphi$
must convert a string with a finite number of non-zero digits into
a string with a finite number of non-zero digits. It forces the
function $\Phi$ associated with $\varphi$ (see
Definition~\ref{local}) to satisfy $\Phi(0^p)= 0$. Therefore, the
algorithm has no chance to exploit the fact that $y \ge x$, when
the $(\beta, \mathcal{A})$-representation of $y$ is
$\lexp{\omega}0 1 0^n\bullet 0^\omega$
and the $(\beta, \mathcal{A})$-representation of $x$ is $\lexp{\omega}0 1\bullet 0^\omega$.\\

Therefore, we are going to focus only on addition of $(\beta,
\mathcal{A})$-representations. We start with setting some
terminology:

\begin{definition}\label{algo}
Let $\beta$ with $|\beta| >1$ be fixed, and consider $c$ and $K$
from $\Z$, $K\ge 2$. The parameters $c$ and $K$ must be such that
$0$ is always an element of the considered alphabets (both before
and after the conversion).
\begin{itemize}
  \item {\em Smallest digit elimination (SDE)} in base $\beta$ is a digit set
  conversion from $\{c, \ldots, c+K\}$ to $\{c+1, \ldots, c+K\}$.
  \item {\em Greatest digit elimination (GDE)} in base $\beta$
  is a digit set conversion from $\{c, \ldots, c+K\}$ to $\{c, \ldots, c+K-1\}$.
\end{itemize}
\end{definition}

The following result  enables  to replace the alphabet
$\mathcal{A}+\mathcal{A}$ entering into conversion during parallel
addition by a smaller one. When  looking for parallel algorithms
for addition on  minimal alphabets, we will  separately discuss
the case when an alphabet contains only non-negative digits.

\begin{proposition}\label{conversion}
Let $\mathcal{A} = \{ m, m+1, \ldots, M-1, M\} $ be an alphabet of
contiguous integers containing $0$ and $1$ and let $\beta$ be the
base of the respective numeration system.

\begin{enumerate}
    \item If $m=0$, then addition in ${\rm Fin}_{\mathcal{A}}(\beta) $ can be
     performed in parallel if, and only if, the conversion from
     $ \mathcal{A} \cup \{ M+1\}$ into $\mathcal{A}$ (greatest digit elimination) can be performed in parallel.
    \item\label{both sides} Suppose that $\{-1,0,1\} \subset \mathcal{A}$. Then addition in ${\rm Fin}_{\mathcal{A}}(\beta) $ can be
     performed in parallel if, and only if, the conversion from    $ \mathcal{A} \cup \{ M+1\}$ into $\mathcal{A}$
   (greatest digit elimination) and
the conversion from
    $\{m-1\} \cup \mathcal{A}$ into $\mathcal{A}$ (smallest  digit elimination)
      can be performed in parallel.

\end{enumerate}
\end{proposition}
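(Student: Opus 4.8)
The plan is to prove both equivalences by a single two‑sided scheme. In each part the ``only if'' direction is a restriction argument, while the ``if'' direction builds full addition as a composition of elimination conversions that lower one extremal digit value by one at a time. Throughout I use the standard fact that a composition of finitely many parallel digit set conversions is again a parallel digit set conversion: value preservation (condition~2 of Definition~\ref{digitsetconv}) and the finite–support condition compose trivially, and a composition of sliding block codes is a sliding block code, see~\cite{LM}. Recall that addition on $\mathcal{A}$ is precisely the conversion from $\mathcal{A}+\mathcal{A}=\{2m,\dots,2M\}$ to $\mathcal{A}$, realized by some local $\Phi$ on windows of length $p$. For the ``only if'' directions one only observes suitable alphabet inclusions and restricts $\Phi$. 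In Part~1 ($m=0$) the domain $\mathcal{A}\cup\{M+1\}=\{0,\dots,M+1\}$ lies in $\{0,\dots,2M\}$ because $M\ge1$, so $\Phi$ restricted to it is a parallel greatest digit elimination. In Part~2 one has $\{m,\dots,M+1\}\subseteq\{2m,\dots,2M\}$ (from $m\le0$, $M\ge1$) and $\{m-1,\dots,M\}\subseteq\{2m,\dots,2M\}$ (from $m\le-1$, which follows from $-1\in\mathcal{A}$), and restricting $\Phi$ to these two sub‑alphabets yields the parallel greatest and smallest digit eliminations. Restriction preserves locality, value, and finite support, so these directions are immediate.

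The substance is the ``if'' direction. Assume the greatest digit elimination $g\colon\{m,\dots,M+1\}^{\Z}\to\{m,\dots,M\}^{\Z}$ (and, in Part~2, the smallest digit elimination $h\colon\{m-1,\dots,M\}^{\Z}\to\{m,\dots,M\}^{\Z}$) is parallel. I first produce, for every $N\ge M$, a parallel conversion $\gamma_N\colon\{2m,\dots,N+1\}^{\Z}\to\{2m,\dots,N\}^{\Z}$ lowering the maximum by one. Given an input $u$, clip each digit by $b_j:=\max\bigl(m,\,\min(u_j,M+1)\bigr)\in\{m,\dots,M+1\}$ and set the remainder $a_j:=u_j-b_j$; a short case analysis gives $a_j\in\{m,\dots,N-M\}$. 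Applying $g$ to the string $b$ yields $b'$ with digits in $\{m,\dots,M\}$ and $\sum_j b'_j\beta^j=\sum_j b_j\beta^j$, and the output is defined digit‑wise by $v_j:=a_j+b'_j$. The decisive arithmetic is that $v_j\in\{2m,\dots,N\}$, the upper bound being $a_j+b'_j\le(N-M)+M=N$; value is preserved since $\sum_j v_j\beta^j=\sum_j a_j\beta^j+\sum_j b_j\beta^j=\sum_j u_j\beta^j$, and $\gamma_N$ is local because $a_j,b_j$ depend only on $u_j$ while $b'$ is obtained from $u$ through the local map $g$. Composing the $\gamma_N$ for $N=2M-1,\dots,M$ (the largest $N$ applied first) converts $\{2m,\dots,2M\}$ down to $\{2m,\dots,M\}$.

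In Part~1, where $m=0$, this composite already lands in $\mathcal{A}=\{0,\dots,M\}$ and is therefore addition, finishing the proof. In Part~2 I apply the mirror construction with $h$: for every $L\le m$ a parallel conversion $\delta_L\colon\{L-1,\dots,M\}^{\Z}\to\{L,\dots,M\}^{\Z}$ raising the minimum by one, obtained by clipping $b_j:=\min\bigl(M,\,\max(u_j,m-1)\bigr)\in\{m-1,\dots,M\}$, setting $a_j:=u_j-b_j\in\{L-m,\dots,0\}$, applying $h$ to get $b'$, and recombining $v_j:=a_j+b'_j\in\{L,\dots,M\}$ (the lower bound being $a_j+b'_j\ge(L-m)+m=L$). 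Composing the $\delta_L$ for $L=2m+1,\dots,m$ (the smallest $L$ applied first) raises the minimum from $2m$ to $m$. The full composite thus converts $\{2m,\dots,2M\}$ to $\{m,\dots,M\}=\mathcal{A}$, i.e.\ performs addition in parallel.

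The main obstacle, and the one place requiring genuine care rather than routine checking, is the recombination step: after applying the single‑digit elimination $g$ (resp.\ $h$) to the clipped string one must fold the remainder $a_j$ back digit‑wise \emph{without} creating an out‑of‑range digit, for otherwise the argument would reduce to performing a fresh full addition of two $\mathcal{A}$‑strings and become circular. This is exactly why the clipping thresholds $M+1$ and $m-1$ are chosen as they are: they force the remainder bounds $a_j\le N-M$ and $a_j\ge L-m$, which guarantee that $a_j+b'_j$ lands in the one‑step‑reduced alphabet and that no further carrying is needed. Verifying these bounds in each of the few cases is the heart of the argument; the remaining ingredients are the alphabet inclusions for the restriction direction and the composition principle for parallel conversions.
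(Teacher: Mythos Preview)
Your proof is correct and follows essentially the same route as the paper's: split each digit into a clipped part lying in the domain of the greatest (resp.\ smallest) digit elimination and a remainder, apply the elimination, recombine digit-wise to drop the extremal value by one, and iterate a fixed number of times. Your write-up is in fact more complete than the paper's, since you also spell out the ``only if'' direction via restriction and verify the recombination bounds explicitly.
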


\begin{proof}
1. Consider $x$ and $y$ from ${\rm Fin}_{\mathcal{A}}(\beta)$, and
let $z= x+y$. The coefficients of $z$ are in $\{0, \ldots, 2M\}$,
so $z$ can be decomposed into the sum of $z'$ with coefficients in
$\{0, \ldots, M+1\}$ and $z''$ with coefficients in $\{0, \ldots,
M-1\}$. According to the assumption of Statement 1, $z'$ is
transformable in parallel into $w$ with coefficients in
$\mathcal{A}$. So $w+z''$ has coefficients in $\{0, \ldots,
2M-1\}$. We iterate this process until the result is on
$\mathcal{A}$, knowing that we have to repeat
$M$ such iterations (i.e., a finite fixed number of iterations).\\
2. Analogous to the proof of Statement 1; and, again, the number
of such iterations is finite and fixed, this time at $\max \{M,
-m\}$.

\end{proof}

In the sequel we will discuss only  questions about  parallel
addition on ${\rm Fin}_{\mathcal{A}}(\beta) $. Nevertheless,
parallel addition is closely related to the question of parallel
conversion between different alphabets.

\begin{corollary} Let   $ \mathcal{A} $  and   $ \mathcal{B}$ be two alphabets
of consecutive integers containing 0.

\begin{enumerate}
      \item Suppose that $\{-1,0,1\} \subset \mathcal{A}$ and
      addition on ${\rm Fin}_{\mathcal{A}}(\beta) $ can be
      performed in parallel. Then conversion from $\mathcal{B}$
      into $\mathcal{A}$ can be performed in parallel for any
      alphabet $\mathcal{B}$.
   \item Suppose that conversion from $\mathcal{B}$ to
   $\mathcal{A}$ and conversion from $\mathcal{A}$ to
   $\mathcal{B}$ can be performed in parallel. Then parallel
   addition on ${\rm Fin}_{\mathcal{A}}(\beta) $ can be
      performed in parallel if, and only if, parallel
   addition on  ${\rm Fin}_{\mathcal{B}}(\beta) $ can be
      performed in parallel.
\end{enumerate}

\end{corollary}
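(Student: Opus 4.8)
The plan is to build all the required conversions as compositions of parallel (i.e. local) digit set conversions, relying on the standard fact that the composition of two sliding block codes is again a local function, and that every step preserves the property of having only finitely many non-zero digits, which is guaranteed by Definition~\ref{digitsetconv}.

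For Statement 1, I would argue as follows. Write $\mathcal{B} = \{m', \ldots, M'\}$ with $m'\le 0\le M'$, and set $N := \max\{-m', M'\}$. Since $\{-1,0,1\} \subseteq \mathcal{A}$, every digit $b \in \mathcal{B}$ can be written as a sum $b = c_1 + \cdots + c_N$ of $N$ digits $c_i \in \{-1,0,1\} \subseteq \mathcal{A}$ (take $|b|$ of them equal to $\mathrm{sgn}(b)$ and the rest equal to $0$). Applying this rule position by position to a $\mathcal{B}$-representation $(u_j)_j$ of a number $x$ splits it, by a $1$-local map, into $N$ strings over $\{-1,0,1\} \subseteq \mathcal{A}$, each with finitely many non-zero digits, whose values sum to $x$. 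Because addition of two numbers on $\mathcal{A}$ is parallel, so is addition of any fixed number $N$ of summands (as noted at the beginning of Section~\ref{sec:conversion}); applying it to these $N$ strings yields an $\mathcal{A}$-representation of $x$. The composition of the $1$-local splitting with this parallel $N$-fold addition is local, which is precisely a parallel conversion from $\mathcal{B}$ to $\mathcal{A}$.

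For Statement 2, I would first reduce the equivalence to a single implication by symmetry: the two hypotheses (parallel conversions $\mathcal{B}\to\mathcal{A}$ and $\mathcal{A}\to\mathcal{B}$) are symmetric in $\mathcal{A}$ and $\mathcal{B}$, so it suffices to show that parallel addition on $\mathcal{A}$ implies parallel addition on $\mathcal{B}$. To add on $\mathcal{B}$, I would chain four parallel steps: starting from the $(\mathcal{B}+\mathcal{B})$-representation of the sum, first split it digit-wise into two $\mathcal{B}$-strings by a $1$-local map; convert each of these from $\mathcal{B}$ to $\mathcal{A}$ (the hypothesis $\mathcal{B}\to\mathcal{A}$); add the two $\mathcal{A}$-strings using the assumed parallel addition on $\mathcal{A}$, i.e. the conversion $\mathcal{A}+\mathcal{A}\to\mathcal{A}$; and finally convert the result back to $\mathcal{B}$ (the hypothesis $\mathcal{A}\to\mathcal{B}$). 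Each individual step is local and preserves finiteness, hence so is the whole composition, giving a parallel conversion $\mathcal{B}+\mathcal{B}\to\mathcal{B}$, that is, parallel addition on $\mathcal{B}$.

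I expect the genuinely delicate points to be bookkeeping rather than substantial obstacles: verifying that the digit-wise splitting and digit-wise summing are indeed local maps in the sense of Definition~\ref{local}, and invoking closure of local functions under composition together with preservation of the finite-support condition. The one place that requires care is the reduction ``parallel addition of two summands implies parallel addition of $N$ summands'' used in Statement 1; but this is exactly the observation already recorded at the start of Section~\ref{sec:conversion}, applied here with $N$ fixed and depending only on $\mathcal{B}$, so that constant-time parallelism is preserved throughout.
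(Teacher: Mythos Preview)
Your proposal is correct and follows essentially the same approach as the paper's proof. For Point~1, both arguments observe that $\{-1,0,1\}\subset\mathcal{A}$ forces $\mathcal{B}\subset\underbrace{\mathcal{A}+\cdots+\mathcal{A}}_{k\ \text{times}}$ for some fixed $k$ and then invoke parallel addition of $k$ summands; for Point~2, both compose the conversion $\mathcal{B}\to\mathcal{A}$, the parallel addition on $\mathcal{A}$, and the conversion $\mathcal{A}\to\mathcal{B}$ (your extra digit-wise splitting of the $(\mathcal{B}+\mathcal{B})$-string into two $\mathcal{B}$-strings is a harmless reformulation of starting from the two summands separately).
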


\begin{proof} 1. Possibility of parallel
   addition on ${\rm Fin}_{\mathcal{A}}(\beta) $ implies that
conversion

$$\hbox{from} \ \ \underbrace{\mathcal{A} +
\mathcal{A} + \cdots
   +\mathcal{A}}_{k\ \ \textrm{times} }\quad \hbox{ into}\quad  \mathcal{A}$$ can be made in
   parallel for any fixed positive integer $k$. Any
   finite alphabet  $\mathcal{B}$ is a subset of $\underbrace{\mathcal{A} + \mathcal{A} + \cdots
   +\mathcal{A}}_{k\ \ \textrm{times}}$ for some $k$. This proves Point
   1.\\
   2.  Let us assume that parallel addition  is possible on
${\rm Fin}_{\mathcal{A}}(\beta) $. To add two numbers
   $x$ and $y$  represented on  the alphabet $\mathcal{B}$, we at
   first use parallel algorithm for conversion  from $\mathcal{B}$  to
   $\mathcal{A}$, then we add these numbers by parallel algorithm
   acting on ${\rm Fin}_{\mathcal{A}}(\beta) $ and finally we use
   parallel
   algorithm for conversion back   from $\mathcal{A}$  to
   $\mathcal{B}$.
\end{proof}

We now show how a parallel algorithm acting on one alphabet can be
modified to work on another alphabet. First we mention a simple
property.

\begin{proposition}\label{opposite}
Given a base $\beta \in \mathbb{C}$, $\beta$ an algebraic number,
and two alphabets $\mathcal{A}$ and $\mathcal{B}$ containing $0$
such that $ \mathcal{A} \cup \mathcal{B} \subset
\mathbb{Z}[\beta]$. Then conversion in base $\beta$ from
$\mathcal{A}$ to $\mathcal{B}$ is computable in parallel by a
$p$-local function if, and only if, conversion in base $\beta$
from $(-\mathcal{A})$ to $(-\mathcal{B})$ is computable in
parallel by a $p$-local function.
\end{proposition}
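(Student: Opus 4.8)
The plan is to exploit the fact that negation $x\mapsto -x$ is an involution on $\mathbb{Z}[\beta]$ which is additive and commutes with the evaluation map $(u_j)\mapsto \sum_j u_j\beta^j$. Since the statement is symmetric under swapping $(\mathcal{A},\mathcal{B})$ with $(-\mathcal{A},-\mathcal{B})$ — noting that $-(-\mathcal{A})=\mathcal{A}$ and $-(-\mathcal{B})=\mathcal{B}$ — it suffices to prove a single implication; the converse then follows by applying the very same argument to the alphabets $-\mathcal{A}$ and $-\mathcal{B}$.

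So I would assume there is a $p$-local conversion $\varphi:\mathcal{A}^\Z\rightarrow\mathcal{B}^\Z$ with memory $r$, anticipation $t$, $p=r+t+1$, given by a local rule $\Phi:\mathcal{A}^p\rightarrow\mathcal{B}$ as in Definition~\ref{local}. First I would define the candidate local rule $\Psi:(-\mathcal{A})^p\rightarrow(-\mathcal{B})$ for the opposite alphabets by $\Psi(a_{t}\cdots a_{-r}):=-\Phi(-a_t\cdots -a_{-r})$. Since each $-a_i$ lies in $\mathcal{A}$, the output $\Phi(-a_t\cdots -a_{-r})$ lies in $\mathcal{B}$, so $\Psi$ indeed takes values in $-\mathcal{B}$, and $0\in -\mathcal{B}$ as required. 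Let $\psi:(-\mathcal{A})^\Z\rightarrow(-\mathcal{B})^\Z$ be the $p$-local function it induces, with the same memory $r$ and anticipation $t$ as $\varphi$.

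The key identity to establish is $\psi(w)=-\varphi(-w)$ for every $w\in(-\mathcal{A})^\Z$, which follows simply by reading off coordinates: $\psi(w)_j=\Psi(w_{j+t}\cdots w_{j-r})=-\Phi(-w_{j+t}\cdots -w_{j-r})=-\varphi(-w)_j$. From this identity both conditions of Definition~\ref{digitsetconv} are immediate. If $w$ has finitely many nonzero digits, then so does $-w\in\mathcal{A}^\Z$, hence $\varphi(-w)$ and $\psi(w)=-\varphi(-w)$ have only finitely many nonzero digits; and $\sum_j \psi(w)_j\beta^j=-\sum_j \varphi(-w)_j\beta^j=-\sum_j(-w_j)\beta^j=\sum_j w_j\beta^j$, using that $\varphi$ preserves the represented value. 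Thus $\psi$ is a $p$-local digit set conversion in base $\beta$ from $-\mathcal{A}$ to $-\mathcal{B}$.

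There is essentially no hard step here: the content is entirely the observation that the local rule transports through negation with the \emph{same} window length $p$, so the sliding-block-code structure is preserved, and that negation respects both finiteness of support and the value sum. The only point demanding minor care is the index convention of Definition~\ref{local} (indices of $\Z$ decreasing from left to right), so as to match memory $r$ and anticipation $t$ correctly when transferring $\Phi$ to $\Psi$; but this is purely notational and does not affect the argument.
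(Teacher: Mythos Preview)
Your proof is correct and follows exactly the same idea as the paper: define the new local rule by $\Psi(x_1\cdots x_p)=-\Phi((-x_1)\cdots(-x_p))$ and observe that the induced $p$-local map conjugates $\varphi$ by coordinatewise negation. You are in fact more explicit than the paper, which states the definition of $\tilde\Phi$ and notes $\tilde\Phi(0^p)=0$ but leaves the verification of the two conditions of Definition~\ref{digitsetconv} to the reader.
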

\begin{proof}
Let $\varphi: \mathcal{A}^\mathbb{Z} \rightarrow
\mathcal{B}^\mathbb{Z} $ be $p$-local, defined by $\Phi:
\mathcal{A}^p \rightarrow \mathcal{B} $. Conversion from the
alphabet $(-\mathcal{A}) =\{-a\,|\, a\in \mathcal{A}\}$ to
$(-\mathcal{B})$ is computable in  parallel by the $p$-local
function $\tilde{\varphi}: (-\mathcal{A})^\mathbb{Z} \rightarrow
(-\mathcal{B})^\mathbb{Z}$ which uses  the function $\tilde{\Phi}:
(-\mathcal{A})^p \rightarrow (-\mathcal{B})$ defined for any
$x_1,x_2, \ldots, x_p \in (-\mathcal{A})$ by the prescription
$$\tilde{\Phi}(x_1x_2 \cdots x_p) = -\Phi\bigl((-x_1) (-x_1)\cdots
(-x_p)\bigr)\,,$$ which implies that
$\tilde{\Phi}(0^p)=-\Phi(0^p)=0$.
\end{proof}

The next result allows to pass from one alphabet allowing parallel
digit set conversion to another one. First we set a definition.

\begin{definition}\label{fixed}
Let $\mathcal{A}$ and $\mathcal{B}$ be two alphabets containing
$0$ such that $ \mathcal{A} \cup \mathcal{B} \subset
\mathbb{Z}[\beta]$. Let $\varphi: \mathcal{A}^\mathbb{Z}
\rightarrow \mathcal{B}^\mathbb{Z} $ be a $p$-local function
realized by the function $\Phi: \mathcal{A}^p \rightarrow
\mathcal{B}$. The letter $h$ in $\mathcal{A}$ is said to be
\emph{fixed} by $\varphi$ if $\varphi(\lexp{\omega}h \bullet
h^\omega ) = \lexp{\omega}h \bullet h^\omega $, or, equivalently,
$\Phi(h^p)=h$.
\end{definition}

\begin{theorem}\label{symmetry}
Given a base $\beta \in \mathbb{C}$, $\beta$ an algebraic number,
and two alphabets $\mathcal{A}$ and $\mathcal{B}$ containing $0$
such that $ \mathcal{A} \cup \mathcal{B} \subset
\mathbb{Z}[\beta]$, suppose that conversion in base $\beta$ from
$\mathcal{A}$ to $\mathcal{B}$ is computable by a $p$-local
function $\varphi: \mathcal{A}^\mathbb{Z} \rightarrow
\mathcal{B}^\mathbb{Z} $.

If some letter $h$ in $\mathcal{A}$ is fixed by $\varphi$ then
conversion in base $\beta$ from $ \mathcal{A}'=\{a-h \,|\, a\in
\mathcal{A}\}$ to $\mathcal{B}'=\{b-h \,|\, b  \in \mathcal{B}\}$
is computable in parallel by a $p$-local function.
\end{theorem}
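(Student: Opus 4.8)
The plan is to build the new local function by the obvious translation of $\varphi$. Define $\Phi' : (\mathcal{A}')^p \to \mathcal{B}'$ by
$$\Phi'(x_1 x_2 \cdots x_p) = \Phi\bigl((x_1+h)(x_2+h)\cdots(x_p+h)\bigr) - h,$$
and let $\varphi' : (\mathcal{A}')^{\mathbb{Z}} \to (\mathcal{B}')^{\mathbb{Z}}$ be the induced $p$-local function. First I would check that this is well defined: if $x_i \in \mathcal{A}'$ then $x_i + h \in \mathcal{A}$, so the argument of $\Phi$ is legal and its output lies in $\mathcal{B}$; subtracting $h$ lands in $\mathcal{B}'$. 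Note that $h \in \mathcal{B}$ because $\Phi(h^p)=h$, so $0 = h-h \in \mathcal{B}'$, and moreover $\Phi'(0^p) = \Phi(h^p) - h = 0$, which is the condition necessary for $\varphi'$ to map finitely-supported words to finitely-supported words.

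The device linking the two conversions is the sequence $\tilde u$ defined by $\tilde u_j = u_j + h$ for a given finitely-supported $u \in (\mathcal{A}')^{\mathbb{Z}}$. Although $\tilde u$ is \emph{not} finitely supported (it equals $h$ off the support of $u$), it is a legitimate element of $\mathcal{A}^{\mathbb{Z}}$, so $\tilde v := \varphi(\tilde u)$ is defined, and directly from the definitions $v := \varphi'(u)$ satisfies $v_j = \tilde v_j - h$ for all $j$. Since $h$ is fixed, for every index $j$ whose window $[j-r,j+t]$ avoids the support of $u$ we get $\tilde v_j = \Phi(h^p) = h$, hence $v_j = 0$; this establishes condition $1$ of Definition~\ref{digitsetconv} (finite support of $v$). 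It then remains to verify the value-preservation condition $\sum_j v_j \beta^j = \sum_j u_j \beta^j$, equivalently $\sum_j (\tilde v_j - \tilde u_j)\beta^j = 0$, a genuinely finite sum.

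The value-preservation step is where the real work lies, because $\varphi$ is non-linear and $\tilde u$ is infinite, so I cannot invoke condition $2$ of Definition~\ref{digitsetconv} directly. The plan is to truncate: for large $L$ let $\tilde u^{(L)}$ agree with $\tilde u$ on $[-L,L]$ and vanish elsewhere. This is a finite $\mathcal{A}$-word, so value-preservation of $\varphi$ gives $\sum_j \varphi(\tilde u^{(L)})_j \beta^j = \sum_j \tilde u^{(L)}_j \beta^j$. By $p$-locality, $\varphi(\tilde u^{(L)})_j$ coincides with $\tilde v_j$ on the central block $[-L+r, L-t]$ and differs from it only in two boundary zones of width $O(p)$ near $\pm L$; crucially, once $L$ exceeds the support radius of $u$, the digits produced in these boundary zones depend only on the distance to $\pm L$, not on $L$ itself. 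Subtracting the ``all-$h$'' contribution $h\sum_{j=-L}^{L}\beta^j$ from both sides and collecting terms, I expect to reach an identity of the shape
$$\sum_j v_j\beta^j - \sum_j u_j\beta^j = \alpha\,\beta^{L} + \delta\,\beta^{-L}$$
valid for all large $L$, with $\alpha,\delta$ independent of $L$. Since $|\beta|>1$, the left-hand side is a fixed constant while $\beta^{L}$ is unbounded, which forces $\alpha = 0$; letting $L\to\infty$ then kills $\delta\beta^{-L}$ and yields the desired equality. The main obstacle is precisely this boundary bookkeeping --- showing that the boundary digits stabilise and that the residual splits into the $\beta^{\pm L}$ pattern --- which is exactly the kind of boundary contribution already met as the terms $W$ and $\widetilde{W}$ in Claim~\ref{divides1} and Corollary~\ref{doubleW}.
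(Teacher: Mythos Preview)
Your proposal is correct and follows the same core strategy as the paper: define the shifted local rule $\Phi'(x_1\cdots x_p)=\Phi((x_1+h)\cdots(x_p+h))-h$, observe that $\Phi(h^p)=h$ gives $\Phi'(0^p)=0$, and reduce value-preservation to controlling the boundary effects that arise because the shifted word $\tilde u$ is not finitely supported.

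The execution of the value-preservation step differs. The paper does a \emph{one-sided} construction: it fixes $L=\max\{j:u'_j\neq 0\}$, extends $u'+h$ by a block of $h$'s and then $0$'s only on the high-index side, applies $\varphi$ to the resulting left-infinite word (justified by a product-topology limit in the remark following the proof), and then computes the single boundary contribution $P_2$ \emph{exactly} via Corollary~\ref{doubleW}, obtaining the identity~\eqref{P3Psi} algebraically. Your approach is a \emph{two-sided} symmetric truncation on $[-L,L]$, leaving two boundary contributions which you do not compute explicitly; instead you package them as $\alpha\beta^{L}+\delta\beta^{-L}$ with $\alpha,\delta$ independent of $L$, and kill them by the soft argument that a constant cannot equal $\alpha\beta^{L}+\delta\beta^{-L}$ for all large $L$ unless $\alpha=0$, whence taking $L\to\infty$ gives the constant $=0$. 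Both routes implicitly use $|\beta|>1$ (the paper for convergence of the left-infinite series, you for $|\beta^{L}|\to\infty$). Your route is a bit more symmetric and avoids invoking the exact value of $W$; the paper's route is more explicit and shows in fact that the boundary term cancels identically, which is why Corollary~\ref{doubleW} is isolated as a separate statement.
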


\begin{proof}   Let $\Phi: \mathcal{A}^p \rightarrow \mathcal{B} $  be
the function realizing  conversion  from $\mathcal{A}$ to
$\mathcal{B}$, with memory $r$ and anticipation $t$ satisfying
$p=r+t+1$. It means that for any $u =(u_j) \in
\mathcal{A}^\mathbb{Z} $ such that $u$ has only finite number of
non-zero entries, we have after conversion  the sequence $v
=\varphi(u)$ such that
\begin{itemize}
\item $v= (v_j) \in \mathcal{B}^\mathbb{Z}$ has only finite number
of non-zero entries;

\item $v_j = \Phi(u_{j+t} \cdots u_{j+1} u_j u_{j-1}\cdots
u_{j-r})$ for any  $j \in \mathbb{Z}$;

\item $\sum_{j\in \mathbb{Z}} u_j\beta^j =\sum_{j\in \mathbb{Z}}
v_j\beta^j$.

\end{itemize}
For any $x_1,\dots,x_p \in \mathcal{A'}$ we define
\begin{equation}\label{DefPsi} \Psi(x_1x_2 \cdots x_p) =
\Phi\bigl((x_1+h)(x_2+h)\cdots (x_p+h)\bigr) - h\,.\end{equation}
It is easy to check that $\Psi: (\mathcal{A}')^p \rightarrow
\mathcal{B}'$. Denote by $\psi: (\mathcal{A}')^\mathbb{Z}
\rightarrow (\mathcal{B}')^\mathbb{Z}$ the $p$-local function
realized by the function $\Psi$. We will show that the function
$\psi$ performes conversion from $\mathcal{A}'$ to $\mathcal{B}'$.

As $\Phi(h^p) = h$ we have $\Psi(0^p) = \Phi(h^p)-h =0$.
Consequently,  $v'=\psi(u')$  has only a finite numbers of
non-zero digits of the form
$$v_j' = \Psi(u_{j+t}' \cdots
u'_{j}\cdots u'_{j-r})$$ for any $u' \in
(\mathcal{A}')^\mathbb{Z}$ with a finite number of non-zero
entries $u_j'$. It remains to show that
\begin{equation}\label{toVerify}\sum_{j\in \mathbb{Z}}
u_j'\beta^j =\sum_{j\in \mathbb{Z}} v_j'\beta^j =  \sum_{j\in
\mathbb{Z}} \Psi(u'_{j+t} \cdots u'_j\cdots
u'_{j-r})\beta^j\,.\end{equation} Before verifying the previous
statement,  we deduce an auxiliary equality. Put $L:= \max\{j \in
\mathbb{Z}\,|\, u'_j \neq 0\}$ and define $u=(u_j) \in
\mathcal{A}^\mathbb{Z}$ as $$ u_j:= \left\{
\begin {array}{cl}
u'_j+h & \hbox{\ if\ \ }\ j\leq L\\
h  & \hbox{\ if\ \ }\  L<  j\leq L+p-1\\
0&  \hbox{\ if\ \ }\ j\geq L+p
\end{array}  \right.$$

As $\varphi$ realizes conversion from $\mathcal{A}$ to
$\mathcal{B}$,  we have
\begin{equation}\label{convert}  h \!\!\!\sum_{j\leq L+p-1}\beta^j
+ \sum_{j\leq L}u'_j\beta^j = \sum_{j\in \mathbb{Z}} u_j\beta^j  =
\sum_{j \in \mathbb{Z}} \Phi(u_{j+t} \cdots u_j\cdots
u_{j-r})\beta^j = \sum_{j\in \mathbb{Z}} v_j\beta^j\,.
\end{equation}
Let us split the last sum into three pieces
$$ P_1 = \sum_{j\geq L+p+r} v_j\beta^j, \qquad P_2 =
\sum_{j=L+r+1}^{L+p+r-1} v_j \beta^j \quad \hbox{and} \quad P_3 =
\sum_{j\leq L+r} v_j\beta^j\,.$$ In the first sum,
$v_j=\Phi(0^p)=0$,  as for $j\geq L+p+r$, all arguments $u_{j+t},
\ldots , u_j,\ldots ,u_{j-r}$ of the function $\Phi$ are zeros,
i.e., $ P_1=0$.

In the second sum $P_2$, the first coefficient is
$v_{L+r+1}=\Phi(u_{L+p} \cdots u_{L+1})= \Phi(0h^{p-1})$, the
second one is $v_{L+r+2}=\Phi(u_{L+p+1} \cdots u_{L+2}) =
\Phi(0^2h^{p-2})$, etc. Using Corollary \ref{doubleW},  we obtain
$$P_2 = \beta^{L+r+1}\sum_{j=1}^{p-1}\Phi(0^jh^{p-j})\beta^{j-1} =  \beta^{L+r+1} h \frac{\beta^t-1}{\beta-1}.$$
 Since
$\sum_{j\in \mathbb{Z}} v_j\beta^j = P_1+P_2+P_3$, we may
calculate the value of $P_3$ using \eqref{convert}
\begin{equation}\label{P3}
P_3= \sum_{j\leq L} u_j'\beta^j +h \sum_{j\leq L+p-1}\beta^j -
\beta^{L+r+1} h \frac{\beta^t-1}{\beta-1} = \sum_{j\leq
L}u_j'\beta^j + h \sum_{j\leq L+r}\beta^j \,.
\end{equation}
All coefficients $v_j's$ in the sum $P_3$ are of the form
$v_j=\Phi\bigl((u_{j+t}'+h) \cdots (u_{j-r}'+h)\bigr)$. We have
thus shown that
\begin{equation}\label{P3Psi}
\sum_{j\leq L+r} \Phi\bigl((u_{j+t}'+h) \cdots
(u_{j-r}'+h)\bigr)\beta^j= \sum_{j\leq L}u_j'\beta^j +h
\sum_{j\leq L+r}\beta^j\,.
\end{equation}
Let us come back to the task to show \eqref{toVerify}. In the
right sum of \eqref{toVerify}, all arguments  $u'_{j+t}, \ldots,
u'_j,\ldots ,u'_{j-r}$  of $\Psi$  are zero for $j> L+r$, and
therefore $v_j'=\Psi(0^p) = \Phi(h^p)-h=0$. In the left sum of
\eqref{toVerify}, all coefficients $u_j'$ are for $j
>L$ equal to zero as well. So we have to check whether
$$ \sum_{j\leq L}
u_j'\beta^j =  \sum_{j\leq L+r} \Psi(u'_{j+t} \cdots u'_j\cdots
u'_{j-r})\beta^j\,.
$$
Because of the definition of $\Psi$ in \eqref{DefPsi},  this
relation is equivalent to Equation \eqref{P3Psi}.
\end{proof}

\begin{remark}  For deduction of \eqref{convert}, we have applied
the mapping $\varphi$ to the word $u
=\lexp{\omega}0u_{L+p-1}u_{L+p-2}\cdots u_0\bullet u_{-1}u_{-2}
\cdots $ with infinitely many non-zero entries. Let us explain the
correctness of this step. Denote by $u^{(n)}$ the word
$\lexp{\omega}0u_{L+p-1}u_{L+p-2}\cdots u_0\bullet u_{-1} \cdots
u_{-n} 0^\omega$. Since $u^{(n)}$ has only a finite number of
non-zero digits, we know that the value corresponding to
$\varphi(u^{(n)})$ equals the value corresponding to $v^{(n)} =
\varphi(u^{(n)})$.  Clearly  $u_{n} \to u$ and $\varphi(u^{(n)})
\to \varphi(u)$ as $n \to \infty$ in the product topology. The
same is true for the numerical values represented by these words.
\end{remark}

\medskip

In the following sections, we give parallel algorithms  for
addition in a given base on alphabets (of contiguous integers)
containing~$0$, of the minimal cardinality~$K$. While doing so, we
favour the method of starting with an alphabet containing only
non-negative digits, and writing a parallel algorithm for the
greatest digit elimination, Algorithm~\textsl{GDE}($\beta$),
converting representations on $\{0, 1\ldots, K-1,K\}$ into
representations on $\{0, 1\ldots, K-1\}$. By
Proposition~\ref{conversion}, parallel addition is thus possible
on $\{0, 1\ldots, K-1\}$. In order to show that parallel addition
is possible also on other alphabets (of the same size), we use the
following corollary.

\begin{corollary}\label{PossibleShifts}  For $K, d \in \mathbb{Z}$, where $ 0\leq d\leq K-1$, denote
 $$\mathcal{A}_{-d}=\{-d, \ldots,0,\ldots, K-1-d \}\,. $$
Let $\varphi$ be a $p$-local function realizing conversion in base
$\beta$ from $\mathcal{A}_{0}\cup \{K\}$ to $\mathcal{A}_{0}$. If
both letters $d$ and $K-1-d$ are fixed by $\varphi$, then parallel
addition is performable in parallel on $\mathcal{A}_{-d}$ as well.
\end{corollary}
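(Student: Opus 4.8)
The plan is to derive parallel addition on $\mathcal{A}_{-d}$ from Proposition~\ref{conversion}, which reduces it to the greatest digit elimination (GDE) and the smallest digit elimination (SDE) on $\mathcal{A}_{-d}$; both of these I will manufacture out of the single given conversion $\varphi$ (a GDE from $\{0,\ldots,K\}$ to $\{0,\ldots,K-1\}$) by shifting, via Theorem~\ref{symmetry}, and reflecting, via Proposition~\ref{opposite}. The two fixed-letter hypotheses, $\Phi(d^p)=d$ and $\Phi((K-1-d)^p)=K-1-d$, are exactly the two shift amounts I will need.

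For the GDE on $\mathcal{A}_{-d}$ --- a parallel conversion from $\mathcal{A}_{-d}\cup\{K-d\}$ to $\mathcal{A}_{-d}$ --- I note that $\mathcal{A}_{-d}\cup\{K-d\}=\{0,\ldots,K\}-d$ and $\mathcal{A}_{-d}=\{0,\ldots,K-1\}-d$. Since $d$ is a letter of $\{0,\ldots,K\}$ fixed by $\varphi$, Theorem~\ref{symmetry} applied with $h=d$ subtracts $d$ from every letter and produces this shifted conversion directly.

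For the SDE on $\mathcal{A}_{-d}$ --- a parallel conversion from $\{-d-1\}\cup\mathcal{A}_{-d}=\{-d-1,\ldots,K-1-d\}$ to $\mathcal{A}_{-d}$ --- I will first reflect: by Proposition~\ref{opposite} it suffices to convert $-\bigl(\{-d-1,\ldots,K-1-d\}\bigr)=\{0,\ldots,K\}-(K-1-d)$ into $-\mathcal{A}_{-d}=\{0,\ldots,K-1\}-(K-1-d)$, which is again a GDE, now shifted by $K-1-d$. Since $K-1-d$ is the second fixed letter, Theorem~\ref{symmetry} with $h=K-1-d$ supplies this conversion, and Proposition~\ref{opposite} carries it back to the desired SDE. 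This interplay of reflection with the second fixed point is the only genuinely non-mechanical step, and the main thing to get right is matching the two shift amounts to the two reflected alphabets.

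Finally I would assemble the two eliminations. When $1\le d\le K-2$ the alphabet $\mathcal{A}_{-d}$ contains $\{-1,0,1\}$, so part~(\ref{both sides}) of Proposition~\ref{conversion} yields parallel addition at once. The boundary cases need separate, easy handling, and I expect them to be the only place demanding care, since there $\mathcal{A}_{-d}$ is one-sided and part~(\ref{both sides}) of Proposition~\ref{conversion} does not literally apply: for $d=0$ the alphabet is non-negative and the GDE alone suffices by statement~1 of Proposition~\ref{conversion}, while for $d=K-1$ the alphabet is non-positive and I would reduce to the $d=0$ case by applying Proposition~\ref{opposite} to the addition conversion $\mathcal{A}+\mathcal{A}\to\mathcal{A}$ itself, using $-\mathcal{A}_{-(K-1)}=\mathcal{A}_{0}$.
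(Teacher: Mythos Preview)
Your proposal is correct and follows essentially the same route as the paper: both use Theorem~\ref{symmetry} with the fixed letters $d$ and $K-1-d$ to shift the given GDE, combine one shift with Proposition~\ref{opposite} to obtain the SDE on $\mathcal{A}_{-d}$, and then invoke Proposition~\ref{conversion}; the only cosmetic difference is that the paper applies Theorem~\ref{symmetry} first and Proposition~\ref{opposite} second, whereas you do them in the reverse order. Your explicit treatment of the boundary cases $d=0$ and $d=K-1$ is in fact more careful than the paper's own proof, which appeals only to part~(\ref{both sides}) of Proposition~\ref{conversion} and thus tacitly assumes $\{-1,0,1\}\subset\mathcal{A}_{-d}$.
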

\begin{proof}
According to Theorem~\ref{symmetry}, conversions from $\{-d,
\ldots,0,\ldots, K-1-d, K-d\}$ into $\{-d, \ldots,0,\ldots, K-1-d
\} $ and also from $\{-K+1+d, \ldots,0,\ldots, d+1 \}$ into
$\{-K-1+d, \ldots,0,\ldots, d \} $  are performable in parallel.
According to Proposition~\ref{opposite}, conversion from $\{-d-1,
 \ldots,0,\ldots, K-1-d \}$ into $\{-d, \ldots,0,\ldots, K-1-d
\} $ is performable in parallel, as well. Using
Proposition~\ref{conversion} Point~\eqref{both sides}, addition on
the alphabet $\mathcal{A}_{-d}$ can be made in parallel.
\end{proof}
\medskip

\section{Integer base and related complex numeration systems}\label{known}

In this section, we consider some well studied numeration systems,
where the base is an integer, or a root of an integer. Parallel
algorithms for addition in these systems can be found in
\cite{Fr}, but the question of minimality of the alphabet was not
discussed there.

\subsection{Positive integer base}\label{positive}

If the base  $\beta$ is a positive integer $b\ge 2$, then the
minimal polynomial is $f(X) = X-b$, and Theorem~\ref{zdola} gives
$\#\mathcal{A} \geq |f(1)|+2 = b+1$. It is known that parallel
addition is feasible on any alphabet of cardinality $b +1$
containing $0$, in particular on alphabets $\mathcal{A} = \{0,1,
\ldots, b\}$ and  $\mathcal{A} = \{-1, 0,1,\ldots, b-1\}$, see for
instance Parhami~\cite{Parhami}. In the case that $b$ is even,
$b=2a$, parallel addition is realizable on the alphabet
$\mathcal{A} = \{-a, \ldots, a\}$ of cardinality $b +1$ by the
algorithm of Chow and Robertson~\cite{ChowRobertson}.


\subsection{Negative integer base}\label{negative}

If the base $\beta$ is a negative integer, $\beta =-b$, $b\ge 2$,
then the minimal polynomial is $f(X) = X+b$, and
Theorem~\ref{zdola} gives the bound $\#\mathcal{A} \geq |f(1)| =
b+1$. In this section we prove

\begin{theorem}\label{neg-par0} Let  $\beta=-b\in \mathbb{Z}$, $b \ge 2$.
Any alphabet $\mathcal{A}$ of contiguous integers containing $0$
with cardinality  $\# \mathcal{A} = b+1$ allows parallel addition
in base $\beta=-b$ and this alphabet cannot be further reduced.
\end{theorem}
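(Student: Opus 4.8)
The plan is to prove the two separate assertions of the theorem: first, that the lower bound $\#\mathcal{A} = b+1$ cannot be reduced (which follows directly from Theorem~\ref{zdola}), and second, that \emph{every} alphabet of contiguous integers containing $0$ with exactly $b+1$ elements does allow parallel addition. The lower bound is immediate: since $\beta = -b$ has minimal polynomial $f(X) = X+b$ with $|f(1)| = b+1$, Theorem~\ref{zdola} forces $\#\mathcal{A} \ge b+1$, so no alphabet of size $\le b$ works. The substance of the theorem lies entirely in the sufficiency direction, and the challenge is to cover \emph{all} alphabets of cardinality $b+1$ uniformly.

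My strategy would be to reduce the general case to a single, most convenient alphabet using the machinery of Section~\ref{sec:conversion}. First I would exhibit one explicit parallel algorithm for a canonical alphabet of size $b+1$, most naturally the non-negative alphabet $\mathcal{A}_0 = \{0, 1, \ldots, b\}$ realized as a greatest digit elimination, Algorithm~\textsl{GDE}($-b$): a $p$-local conversion from $\{0,1,\ldots,b,b+1\}$ into $\{0,1,\ldots,b\}$. By Proposition~\ref{conversion}, Point~1, such a conversion immediately yields parallel addition on $\{0,\ldots,b\}$. The design of the local rule should be guided by the base relation: in base $-b$ a digit $b+1$ in position $j$ can be rewritten by absorbing a carry into position $j+1$, using that $(b+1)(-b)^j = (-b)^{j+1} + (-b)^j \cdot 1 + \text{correction}$; more precisely, one uses $-b = -(b)$ so that overflowing digits are compensated by adding $1$ to the next higher position with the appropriate sign bookkeeping. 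I expect the window to be short (small memory and anticipation), since carry propagation in negative integer bases is known to be bounded.

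To pass from $\mathcal{A}_0 = \{0,\ldots,b\}$ to an arbitrary shifted alphabet $\mathcal{A}_{-d} = \{-d, \ldots, 0, \ldots, b-d\}$ for $0 \le d \le b$, I would invoke Corollary~\ref{PossibleShifts}. This requires checking that the \textsl{GDE} conversion $\varphi$ fixes both boundary letters $d$ and $b-d$ in the sense of Definition~\ref{fixed}, i.e.\ $\Phi(d^p) = d$ and $\Phi((b-d)^p) = b-d$ for every relevant $d$. The cleanest way to secure this is to design the local function $\Phi$ so that it acts trivially on any constant input lying inside $\{0,\ldots,b\}$; that is, $\Phi(a^p) = a$ whenever $a \le b$, with nontrivial behaviour only triggered when the digit $b+1$ actually appears in the window. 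Once $\Phi$ has this "constant-preserving" property, every letter of $\mathcal{A}_0$ is automatically fixed, and Corollary~\ref{PossibleShifts} applies for each admissible shift $d$, yielding parallel addition on every $\mathcal{A}_{-d}$, which exhausts all contiguous alphabets of size $b+1$ containing $0$.

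The main obstacle I anticipate is the explicit construction and verification of the \textsl{GDE} local function, specifically arranging that carries propagate a bounded distance and that the constant-preserving property $\Phi(a^p) = a$ for $a \le b$ holds simultaneously with correct elimination of the digit $b+1$. In base $-b$ the sign alternation across positions complicates the carry analysis compared to the positive base case, so I would carefully track how a single overflow at one position interacts with the window covering the adjacent positions. I would verify correctness by checking the weight-preservation identity $\sum_j v_j (-b)^j = \sum_j u_j (-b)^j$ locally, confirming that the finitely many non-zero digits map to finitely many non-zero digits (guaranteed by $\Phi(0^p) = 0$), and confirming the fixed-letter conditions needed for Corollary~\ref{PossibleShifts}. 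The remaining bookkeeping is routine once the window length and the rewriting rule are fixed.
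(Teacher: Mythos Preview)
Your proposal is correct and follows exactly the paper's approach: the lower bound comes from Theorem~\ref{zdola} via $|f(1)|=b+1$, then one constructs Algorithm~\textsl{GDE}($-b$) on $\{0,\ldots,b\}$, applies Proposition~\ref{conversion} to obtain parallel addition on $\mathcal{A}_0=\{0,\ldots,b\}$, verifies that every $h\in\{0,\ldots,b\}$ is a fixed letter of the resulting local function, and invokes Corollary~\ref{PossibleShifts} to cover all shifted alphabets $\mathcal{A}_{-d}$. The paper's explicit GDE uses carries $q_j\in\{-1,0,1\}$ (precisely the complication from sign alternation that you anticipate), and indeed fixes every constant input in $\{0,\ldots,b\}$, so the transfer argument goes through verbatim.
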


Any alphabet of contiguous integers containing $0$ which has
cardinality  $ b+1$ can be written in the form
$$\mathcal{A}_{-d}=\{-d, \ldots,0,\ldots, b-d \}\quad \hbox{ for}
\  \ 0 \leq d \leq b-1\,.$$ For proving Theorem \ref{neg-par0}, we
firstly consider the alphabet consisting only of non-negative
digits, i.e., the alphabet $\mathcal{A}_0$.

\vskip0.3cm \hrule \vskip0.2cm

\noindent {\bf Algorithm~\textsl{GDE}($-b$)}: Base $\beta= -b$, $b
\ge 2$, parallel conversion (greatest digit elimination) from
$\{0,\ldots, b+1\}$ to $\{0, \ldots, b\}$.

\vskip0.2cm \hrule \vskip0.2cm

\noindent{\sl Input}: a finite sequence of digits $(z_j)$ of $\{0,\ldots, b+1\}$, with $z = \sum z_j\beta^j$.\\
{\sl Output}: a finite sequence of digits $\{0, \ldots, b\}$, with
$z = \sum z_j\beta^j$.

\vskip0.2cm

\noindent\texttt{for each $j$ in parallel do}\\
1. \hspace*{0.5cm} \texttt{case}
    $\left\{\begin{array}{l}
        z_j = b+1\  \\
        z_j = b\  \hbox{ \texttt{and}} \ z_{j-1} =  0 \ \\
    \end{array} \right\} $ \quad \texttt{then} $q_j:=1$\\[2mm]
\hspace*{1cm} \texttt{if} \qquad $z_j=0$ \hbox{ \texttt{and}} \ $z_{j-1} \ge b $ \qquad \ \texttt{then} $q_j:=-1$\\
\hspace*{1cm} \texttt{else} \qquad \qquad \qquad \qquad \qquad \qquad \qquad \quad \ $q_j:=0$\\
2. \hspace*{0.5cm} $z_j:=z_j-bq_j-q_{j-1}$

\vskip0.2cm \hrule \vskip0.2cm

\begin{proof}
Let $w_j=z_j-bq_j$, and $z_j^{new}= w_j-q_{j-1}$ after Step 2 of
the algorithm.
\begin{itemize}
    \item If $z_j=b+1$, then $w_j=1$. Thus $0 \le z_j^{new}\le 2 \le b$.
    \item For $z_j=b$ and $z_{j-1} = 0$, we get $w_j=0$. Since $q_{j-1}\le 0$, the resulting  $z_j^{new} \in \{0, 1\}$.
    \item For $z_j=b$ and $z_{j-1} \neq 0$, we obtain $w_j=b$. Since $q_{j-1}\neq -1$, the resulting $z_j^{new} \in \{b-1, b\}$.
    \item When $z_j=0$ and $z_{j-1} \ge b$, then $w_j=b$, and $b-1\le z_j^{new} \le b$, because $q_{j-1}\ge 0$.
    \item When $z_j=$ and $z_{j-1} \le b-1$, then $w_j=0$. Since $q_{j-1}\neq 1$, we obtain $0 \le z_j^{new}\le 1$.
    \item If $1\le z_j \le b-1$, then $0 \le z_j^{new}\le b$, as $q_j\in\{-1, 0, 1\}$.
\end{itemize}
Note that we obtain $q_j \neq 0$ only if $z_j$ itself or its
neighbor $z_{j-1}$ are different from zero; it means that the
algorithm is correct in the sense that it does not create a string
of non-zeros from a string of zeros. The input value $z$ equals
the output value $z$ thanks to the fact that the base $\beta$
satisfies $\beta^{j+1} + b \beta^j = 0$ for any $j\in\mathbb{Z}$.
This parallel conversion is $3$-local, with memory $2$ and
anticipation $0$, i.e., $(0,2)$-local since $z_j^{new}$
depends on $(z_j, z_{j-1},z_{j-2})$.
\end{proof}

Let us prove Theorem \ref{neg-par0}.

\begin{proof}  Proposition~\ref{conversion}  and the previous
Algorithm~\textsl{GDE}($-b$) imply that parallel addition is
possible in the alphabet  $\mathcal{A}_0 = \{ 0,1,\ldots, b\}$.
Moreover, Algorithm~\textsl{GDE}($-b$)  applied to the infinite
sequence $u=\lexp{\omega}h\bullet h^\omega$ gives the infinite
sequence $\varphi(u) = \lexp{\omega}h\bullet h^\omega$ for any
$h\in\{ 0,1,\ldots, b\}$. Therefore, $d$ and $b-d$ are fixed by
$\varphi$ for any $d \in\{ 0,1,2,\ldots, b\}$. Corollary
\ref{PossibleShifts} gives that parallel addition is possible on
any alphabet $\mathcal{A}_{-d} = \{-d, \ldots, , b-d\}$ for $d
\in\{ 0,1,2,\ldots, b\}$. The minimality of the alphabet
$\mathcal{A}_{-d}$ follows from Theorem~\ref{zdola}.
\end{proof}


\subsection{Base $\sqrt[k]{b}$, $b$ integer, $|b|\ge 2$}

Here we will use that $\beta$ is a zero of the polynomial $X^k-b$,
but this not in general the minimal polynomial.

\begin{proposition}\label{root-pos}
Let  $\beta=\sqrt[k]{b}$, $b$ in $\mathbb{Z}$, $|b|\ge 2$ and $k
\ge 1$ integer. Any alphabet $\mathcal{A}$ of contiguous integers
containing $0$ with cardinality  $\# \mathcal{A} = b+1$ allows
parallel addition.
\end{proposition}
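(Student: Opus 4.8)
The plan is to reduce parallel addition in base $\beta=\sqrt[k]{b}$ to parallel addition in the integer base $b=\beta^k$, which has already been settled in Sections~\ref{positive} and~\ref{negative}. The key observation is that, since $\beta^k=b$, splitting a bi-infinite string according to the residue class of its index modulo $k$ turns a single $(\beta,\cdot)$-representation into $k$ independent $(b,\cdot)$-representations. Writing $j=lk+i$ with $0\le i\le k-1$, one has $\beta^{j}=\beta^{lk+i}=b^{l}\beta^{i}$, hence
$$\sum_{j}x_j\beta^j=\sum_{i=0}^{k-1}\beta^i\Bigl(\sum_{l}x_{lk+i}\,b^l\Bigr),$$
so the value of the $\beta$-representation is a fixed $\beta^i$-weighted combination of the $k$ base-$b$ values $X^{(i)}:=\sum_{l}x_{lk+i}b^l$. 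Crucially, a carry in base $b$ moves weight from position $l$ to position $l+1$ within one residue class, which in base $\beta$ is a jump of exactly $k$ positions; the $k$ channels therefore never interact.

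First I would invoke the integer-base results: for $b$ with $|b|\ge 2$, parallel addition in base $b$ is performable on every alphabet $\mathcal{A}$ of contiguous integers containing $0$ of cardinality $|b|+1$ (Section~\ref{positive} for $b\ge 2$, where this is exactly $b+1$, and Theorem~\ref{neg-par0} for $b\le -2$). Let $\Phi_b:(\mathcal{A}+\mathcal{A})^{p'}\to\mathcal{A}$, with memory $r'$ and anticipation $t'$ and $p'=r'+t'+1$, realize the digit set conversion from $\mathcal{A}+\mathcal{A}$ to $\mathcal{A}$ in base $b$, so that the new base-$b$ digit at position $l$ is $\Phi_b(x_{l+t'}\cdots x_{l-r'})$ and $\Phi_b(0^{p'})=0$. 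I would then lift $\Phi_b$ to base $\beta$ by sub-sampling: applying $\Phi_b$ inside each residue class, the new $\beta$-digit at position $j=lk+i$ equals $\Phi_b\bigl(x_{j+kt'},x_{j+k(t'-1)},\ldots,x_{j},\ldots,x_{j-kr'}\bigr)$, i.e.\ $\Phi_b$ evaluated on the $p'$ digits inside the window $[\,j-kr',\,j+kt'\,]$ that are spaced $k$ apart from $j$. Since this prescription refers only to \emph{relative} positions, it defines a single function $\Phi_\beta$ on a window of width $P:=k(r'+t')+1$ (memory $kr'$, anticipation $kt'$) that is independent of $j$, hence a genuine $P$-local sliding block code. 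Because $\Phi_\beta(0^{P})=\Phi_b(0^{p'})=0$, it carries finitely supported strings to finitely supported strings, and by the displayed identity it preserves each $X^{(i)}$ and therefore the represented value; thus $\Phi_\beta$ realizes parallel addition in base $\beta$ on $\mathcal{A}$.

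The only genuinely delicate point is the one addressed by the sub-sampling step: the residue-class construction a priori uses the absolute index $j$ to decide which channel a digit belongs to, whereas a sliding block code must apply one and the same local map at every position. The remedy is to always read every $k$-th digit centered at the current position and feed these to the fixed $\Phi_b$; this removes the apparent position dependence, and the non-interaction of the channels (carries travel in steps of exactly $k$) guarantees that the lifted map computes precisely the channel-wise base-$b$ conversion. I expect this verification to be the crux, while the remainder is the bookkeeping of the value identity above. Note, finally, that the argument uses only $\beta^k=b$ and never the minimality of $X^k-b$, in accordance with the remark preceding the statement.
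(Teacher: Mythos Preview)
Your proposal is correct and follows essentially the same route as the paper: the paper's proof is the single line ``The proof follows from the fact that $\gamma=\beta^k=b$ and the results of Sections~\ref{positive} and~\ref{negative} applied to base $\gamma$,'' and the explicit algorithms \textsl{GDE}($\sqrt[k]{b}$) and \textsl{GDE}($\sqrt[k]{-b}$) given immediately afterwards implement exactly your sub-sampling idea (note how they use $z_{j-k}$ in place of the $z_{j-1}$ appearing in the integer-base algorithms). Your write-up simply spells out in more detail why the lift to a genuine sliding block code works, which the paper leaves implicit.
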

The proof follows from the fact that $\gamma=\beta^k=b$ and the
results of Sections~\ref{positive} and ~\ref{negative}
applied to base $\gamma$.\\

For the sake of completeness we give below the algorithms for the
greatest digit elimination in base $\beta= \sqrt[k]{b}$, $b \ge 2$
and in base $\beta= \sqrt[k]{-b}$, $b \ge 2$.
\bigskip

\vskip0.3cm \hrule \vskip0.2cm

\noindent {\bf Algorithm~\textsl{GDE}($\sqrt[k]{b}$)}: Base
$\beta= \sqrt[k]{b}$, $b \ge 2$, parallel conversion (greatest
digit elimination) from $\{0,\ldots,b+1\}$ to $\{0, \ldots, b\}$.

\vskip0.2cm \hrule \vskip0.2cm

\noindent{\sl Input}: a finite sequence of digits $(z_j)$ of $\{0,\ldots, b+1\}$, with $z = \sum z_j\beta^j$.\\
{\sl Output}: a finite sequence of digits $\{0, \ldots, b\}$, with
$z = \sum z_j\beta^j$.

\vskip0.2cm

\noindent\texttt{for each $j$ in parallel do}\\
1. \hspace*{0.5cm} \texttt{case}
    $\left\{\begin{array}{l}
        z_j = b+1\  \\
        z_j = b\  \hbox{ \texttt{and}} \ z_{j-k} \ge b \ \\
    \end{array} \right\} $ \quad \texttt{then} $q_j:=1$\\[2mm]
\hspace*{1cm} \texttt{else} \qquad \qquad \qquad \qquad \qquad \qquad \qquad \quad \ $q_j:=0$\\
2. \hspace*{0.5cm} $z_j:=z_j-bq_j+q_{j-k}$

\vskip0.2cm \hrule \vskip0.2cm

\bigskip

\vskip0.3cm \hrule \vskip0.2cm

\noindent {\bf Algorithm~\textsl{GDE}($\sqrt[k]{-b}$)}: Base
$\beta= \sqrt[k]{-b}$, $b \ge 2$,  parallel conversion (greatest
digit elimination) from $\{0, \ldots, b+1 \}$ to $\{0, \ldots,
b\}$.

\vskip0.2cm \hrule \vskip0.2cm

\noindent{\sl Input}: a finite sequence of digits $(z_j)$ of $\{0, \ldots, b+1\}$, with $z= \sum z_j\beta^j$.\\
{\sl Output}: a finite sequence of digits $(z_j)$ of $\{0, \ldots,
b\}$, with $z= \sum z_j\beta^j$.

\vskip0.2cm

\noindent\texttt{for each $j$ in parallel do}\\
1. \hspace*{0.5cm} \texttt{case}
    $\left\{\begin{array}{l}
        z_j = b+1\  \\
        z_j = b\  \hbox{ \texttt{and}} \ z_{j-k} = 0  \ \\
    \end{array} \right\} $ \quad \texttt{then} $q_j:=1$\\[2mm]
\hspace*{1cm} \texttt{if} \qquad $z_j=0$ \hbox{ \texttt{and}} \ $z_{j-k} \ge b $ \qquad \ \texttt{then} $q_j:=-1$\\
\hspace*{1cm} \texttt{else} \qquad \qquad \qquad \qquad \qquad \qquad \qquad \quad \ $q_j:=0$\\
2. \hspace*{0.5cm} $z_j := z_j - b q_j - q_{j-k}$

\vskip0.2cm \hrule \vskip0.2cm

\bigskip

Note that in general, we cannot say that the minimal cardinality
of an alphabet for parallel addition is equal to $b+1$, since the
polynomial $X^k-b$ might be reducible. But we have the following
result. We say that $\beta = \sqrt[k]{b}$  is \emph{written in
the minimal form} if $b\neq c^{k'}$ where $k'\geq 2$ divides $k$.
Otherwise, $\beta$ could be written as  $\beta = \sqrt[k'']{c}$
with $k=k'k''$.

\begin{lemma}\label{minimal_form_positive}
Let $\beta=\sqrt[k]{b}$, with $b \in \mathbb{N}$, $b \geq 2$ and
$k$ positive integer, be written in the minimal form. Then the
polynomial $X^k-b$ is minimal for $\beta$.
\end{lemma}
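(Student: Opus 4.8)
The plan is to show that $[\mathbb{Q}(\beta):\mathbb{Q}]=k$, which is equivalent to the irreducibility of $X^k-b$ over $\mathbb{Q}$ and hence to $X^k-b$ being the minimal polynomial of $\beta$. Since $\beta=\sqrt[k]{b}$ is a root of the monic integer polynomial $X^k-b$, it is an algebraic integer, so its minimal polynomial $g$ is monic with integer coefficients, divides $X^k-b$ in $\mathbb{Q}[X]$, and has degree $n:=[\mathbb{Q}(\beta):\mathbb{Q}]\le k$. Everything then reduces to proving $n=k$.

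First I would control the constant term $c_0:=g(0)$. The roots of $X^k-b$ in $\mathbb{C}$ are the numbers $\zeta^j\beta$ for $0\le j\le k-1$, where $\zeta=e^{2\pi i/k}$, and all of them have modulus $\beta=b^{1/k}$; the roots of $g$ form a subset of these. Writing $g(X)=\prod_\rho (X-\rho)$, so that $c_0=(-1)^n\prod_\rho \rho$ where $\rho$ ranges over the roots of $g$, I get $|c_0|=\prod_\rho|\rho|=\beta^{\,n}=b^{\,n/k}$. As $b\neq 0$, $0$ is not a root of $X^k-b$, hence $c_0\neq 0$; moreover $c_0\in\mathbb{Z}$ because $g$ is the minimal polynomial of an algebraic integer. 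Thus $m:=b^{\,n/k}$ is a positive integer, that is,
\[
 m^{k}=b^{\,n}.
\]

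The remaining step is purely arithmetic. Writing $d=\gcd(n,k)$, $n=dn'$, $k=dk'$ with $\gcd(n',k')=1$, the equality $m^k=b^n$ gives $(m^{k'})^d=(b^{n'})^d$, whence $m^{k'}=b^{n'}$ since both sides are positive. Comparing $p$-adic valuations yields $k'\,v_p(m)=n'\,v_p(b)$ for every prime $p$, and $\gcd(n',k')=1$ then forces $k'\mid v_p(b)$ for all $p$, so that $b=c^{k'}$ for some integer $c$. If we had $n<k$, then $d<k$, hence $k'=k/d\ge 2$ and $k'\mid k$; this would exhibit $b$ as a $k'$-th power with $2\le k'\mid k$, contradicting the hypothesis that $\beta=\sqrt[k]{b}$ is written in the minimal form. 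Therefore $n=k$, and $X^k-b$ is the minimal polynomial of $\beta$.

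The only genuinely delicate points are the two facts feeding the displayed equation: that all conjugates of $\beta$ share the same modulus (immediate here, since they are all roots of the binomial $X^k-b$) and that $c_0$ is a nonzero rational integer (minimal polynomial of an algebraic integer, together with $b\neq 0$). I expect the main obstacle to be bookkeeping rather than conceptual, namely matching the paper's notion of minimal form to the number-theoretic statement ``$b$ is not a $k'$-th power for any $k'\ge 2$ dividing $k$''. Note also that, since $b\ge 2>0$, the exceptional case of the classical binomial irreducibility criterion (namely $4\mid k$ together with $b\in -4\mathbb{Q}^4$) never arises; it is exactly the positivity of $b$ that keeps the argument this clean.
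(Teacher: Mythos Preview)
Your proof is correct and follows essentially the same approach as the paper: both argue that the constant term of the minimal factor has absolute value $b^{n/k}$ (since all roots of $X^k-b$ share the modulus $b^{1/k}$), deduce that $b^{n/k}$ is an integer, and then use elementary arithmetic (prime factorizations in the paper, $p$-adic valuations in your version) to conclude that $b$ is a $k'$-th power with $k'=k/\gcd(n,k)$, contradicting the minimal-form hypothesis when $n<k$. The only cosmetic difference is notation and the phrasing of the divisibility step.
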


\begin{proof}
 Let us suppose the opposite fact, i.e., that the polynomial    $$X^k- b = \prod_{\ell = 0}^{k-1}
\Bigl(X-e^{\tfrac{2\pi i\ell}{k}}\sqrt[k]{b}\Bigr)$$ is reducible.
One can write $ X^k- b = f(X)g(X)$, where $f(X)$ and $g(X)$ are
monic polynomials belonging to $\mathbb{Z}[X]$, the polynomial
$f(X)$ is irreducible and its degree $m$ satisfies $1\leq m<k$.
  Let $f(X) =
X^m+f_{m-1}X^{m-1}+\cdots + f_1X+f_0$.  All $m$ zeros of $f$ are
zeros of $X^k- b$ as well, i.e., of the form $ \sqrt[k]{b}$ times a
complex unit. The product of zeros of $f(X)$ is equal to
$(-1)^mf_0$, so  we have
$$|f_0|=\Bigl( \sqrt[k]{b}\Bigr)^m = b^{\frac{m}{k}}
=b^{\frac{m'}{k'}} \,$$
 where $\tfrac{m}{k} =  \tfrac{m'}{k'}$
and $m'$ and $k'$ are coprime.  Let $|f_0| = p_1^{\alpha_1}\cdots
p_r^{\alpha_r}$ be the decomposition into product of distinct
primes $p_1, \ldots, p_r$. Then
$$ b^{m'} = p_1^{k'\alpha_1}\cdots p_r^{k'\alpha_r}$$
and thus $m'$ divides $k'\alpha_j$ for all $j =1,2,\ldots, r$.
Since $k'$ and $m'$ are coprime,  $m'$ divides $\alpha_j$ and
therefore $\alpha_j = m'\alpha'_j$. We can write
$$ b =\Bigl(p_1^{\alpha'_1}\cdots p_r^{\alpha'_r}\Bigr)^{k'}=: c^{k'}.$$
As $1>\tfrac{m}{k} =  \tfrac{m'}{k'}$ the number $k'\geq 2$ and
$k'$ divides $k$ -- a contradiction with the minimal form of $\beta$.
\end{proof}

\begin{corollary}
Let $\beta=\sqrt[k]{b}$, $b$ in $\mathbb{N}$, $b\ge 2$ and $k \ge
1$ integer, written in  the minimal form. Parallel addition is
possible on any alphabet (of contiguous integers) of cardinality
$b+1$ containing $0$, and this cardinality is the smallest
possible.
\end{corollary}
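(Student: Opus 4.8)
The statement splits naturally into a sufficiency part (parallel addition \emph{is} possible on every such alphabet) and a minimality part (no smaller alphabet works), and my plan is to obtain each from a result already established above.

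For sufficiency I would simply invoke Proposition~\ref{root-pos}. That proposition already asserts that for $\beta=\sqrt[k]{b}$ with $|b|\ge 2$, \emph{any} alphabet of contiguous integers containing $0$ of cardinality $b+1$ admits parallel addition. Our present hypotheses ($b\in\mathbb{N}$, $b\ge 2$) are a special case of this, so nothing new is required: the explicit algorithm \textsl{GDE}($\sqrt[k]{b}$), together with Proposition~\ref{conversion} and Corollary~\ref{PossibleShifts}, already does the work of producing a parallel converter on every shifted alphabet of this size.

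For minimality I would combine Lemma~\ref{minimal_form_positive} with Theorem~\ref{zdola}. The entire purpose of assuming that $\beta=\sqrt[k]{b}$ is written in minimal form is that, by Lemma~\ref{minimal_form_positive}, the polynomial $X^k-b$ is then genuinely the \emph{minimal} polynomial $f(X)$ of $\beta$, not merely an annihilating one. Once this is secured, I would compute $f(1)=1-b$, so that $|f(1)|=b-1$. Since $\beta=\sqrt[k]{b}$ is a positive real number greater than $1$ and is at the same time an algebraic integer, the sharper second clause of Theorem~\ref{zdola} applies and yields $\#\mathcal{A}\ge |f(1)|+2=(b-1)+2=b+1$. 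Combined with the sufficiency part, this pins down $b+1$ as exactly the smallest attainable cardinality.

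The one place where care is genuinely needed --- and the only real obstacle --- is the step identifying $|f(1)|=b-1$. The lower bound of Theorem~\ref{zdola} is stated in terms of the \emph{true} minimal polynomial, so if $X^k-b$ happened to be reducible we would have no right to read off $|f(1)|$ from it. This is precisely why the minimal-form hypothesis is indispensable and why Lemma~\ref{minimal_form_positive} must be cited first; after that invocation, everything else is a direct and routine application of previously proved statements.
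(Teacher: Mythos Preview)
Your proposal is correct and follows essentially the same route as the paper: sufficiency via Proposition~\ref{root-pos}, and minimality via Lemma~\ref{minimal_form_positive} to identify $X^k-b$ as the minimal polynomial, followed by the second clause of Theorem~\ref{zdola} with $|f(1)|+2=(b-1)+2=b+1$. The paper's proof is terser but uses exactly the same ingredients.
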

\begin{proof}
Since $f(X)=X^k-b$ is the minimal polynomial of $\beta$, the lower
bound of Theorem~\ref{zdola} is equal to $|f(1)|+2=b+1$.
\end{proof}

\bigskip

We now present several cases of complex bases of the form
$\beta=\sqrt[k]{-b}$, $b$ in $\mathbb{N}$, $b\ge 2$.

The complex base $\beta = -1+\imath$ satisfies $\beta^4=-4$. Its
minimal polynomial is $f(X)=X^2+2X+2$, and the lower bound on the
cardinality of alphabet allowing parallel addition (from
Theorem~\ref{zdola}) is $|f(1)|=5$. It has been proved in
\cite{Fr} by indirect methods that parallel addition on alphabet
$\mathcal{A} = \{-2,\ldots,2\}$ is possible; and, due to
Theorem~\ref{zdola}, this alphabet is minimal.

\begin{corollary}
In base $\beta=-1+\imath$, parallel addition is possible on any
alphabet of cardinality $5$ containing $0$, and this cardinality
is the smallest possible.
\end{corollary}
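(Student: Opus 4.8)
The plan is to read off both halves of the statement from results already in hand, treating feasibility and minimality separately. Nothing new needs to be built: the feasibility half will come from the $k$-th-root machinery of this section, and the minimality half from the minimal-polynomial lower bound of Theorem~\ref{zdola}.

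For the feasibility half I would start from the algebraic identity $(-1+\imath)^2=-2\imath$, hence $\beta^4=(-2\imath)^2=-4$, so that $\beta=\sqrt[4]{-4}$. This puts $\beta$ squarely in the family treated by Proposition~\ref{root-pos}, in the form $\beta=\sqrt[k]{-b}$ with $k=4$ and $b=4$; equivalently, the auxiliary base $\gamma=\beta^4=-4$ is the negative integer covered by Theorem~\ref{neg-par0}, and the root structure lifts that result to $\beta$. Either way one concludes that parallel addition is possible on every alphabet of contiguous integers containing $0$ of cardinality $b+1=5$. If an explicit witness is wanted, I would simply exhibit Algorithm~\textsl{GDE}($\sqrt[4]{-4}$), observe that every digit $h\in\{0,\dots,4\}$ is fixed by it, and transport the result to each shifted alphabet $\mathcal{A}_{-d}$ through Corollary~\ref{PossibleShifts}; but Proposition~\ref{root-pos} already bundles this.

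For the minimality half I would pin down the genuine minimal polynomial of $\beta$. Since $-1+\imath$ is a non-real algebraic number of degree $2$, its minimal polynomial is $f(X)=X^2+2X+2$, which one checks directly from $(-1+\imath)^2+2(-1+\imath)+2=0$. As $\beta$ is an algebraic integer with $|\beta|>1$, Theorem~\ref{zdola} applies and gives $\#\mathcal{A}\ge|f(1)|$; evaluating $f(1)=1+2+2=5$ yields $\#\mathcal{A}\ge5$. (For the few cardinality-$5$ alphabets of the statement that contain $0$ but not $1$, one first passes to the opposite alphabet via Proposition~\ref{opposite}, which does contain $1$, before applying the bound.) Together with the feasibility half this shows $5$ is attained and cannot be lowered.

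The one spot that demands attention --- and which I would flag as a subtlety rather than a true obstacle --- is that the polynomial $X^4+4$ witnessing $\beta=\sqrt[4]{-4}$ is \emph{not} minimal: it factors as $(X^2+2X+2)(X^2-2X+2)$. Consequently the upper bound $b+1$ is read off from the reducible quartic while the lower bound $|f(1)|$ is computed from the quadratic minimal polynomial, and one must verify that the two numbers coincide. They do, since $|f(1)|=5=b+1$, and it is exactly this numerical coincidence that makes the lower and upper bounds meet and closes the corollary.
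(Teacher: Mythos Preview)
Your proposal is correct and follows essentially the same route as the paper: feasibility via $\beta^4=-4$ and Proposition~\ref{root-pos}, minimality via the minimal polynomial $f(X)=X^2+2X+2$ and Theorem~\ref{zdola} giving $|f(1)|=5$. Your added remarks about the reducibility of $X^4+4$ and the handling of alphabets not containing $1$ via Proposition~\ref{opposite} are welcome elaborations that the paper leaves implicit.
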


\begin{remark}\label{kblock}
With a more general concept of parallelism ($k$-block $p$-local
function, see~\cite{Kornerup}), there is a result by
Herreros~\cite{Herreros} saying that addition in this base is
realizable on $\{-1,0,1\}$ by a $4$-block $p$-local function.
\end{remark}

\bigskip
The complex base $\beta=2\imath$ has $X^2+4$ for minimal
polynomial, so the lower bound given by Theorem~\ref{zdola}, equal
to $5$, is attained.

\begin{corollary}
In base $\beta=2\imath$, parallel addition is possible on any
alphabet of cardinality $5$ containing $0$, and this cardinality
is the smallest possible.
\end{corollary}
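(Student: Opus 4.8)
The plan is to read this corollary as a direct specialization of the general root-of-an-integer result of Proposition~\ref{root-pos}, supplemented by the lower bound of Theorem~\ref{zdola}. The essential observation is that $\beta=2\imath$ satisfies $\beta^2=-4$, so that $\beta=\sqrt[k]{b}$ with $k=2$ and $b=-4$; this places us squarely in the setting of Section~\ref{negative} after passing to $\gamma:=\beta^2=-4$.

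For the sufficiency direction (parallel addition on every alphabet of contiguous integers containing $0$ of cardinality $5$), I would invoke Proposition~\ref{root-pos} with $k=2$ and $b=-4$. Since $\gamma=\beta^2=-4$ is a negative integer base, Theorem~\ref{neg-par0} furnishes parallel addition on any alphabet of cardinality $4+1=5$ in base $\gamma$. The transfer from base $\gamma$ to base $\beta$ is the mechanism underlying Proposition~\ref{root-pos}: writing $x=\sum_j x_j\beta^j=\sum_{i=0}^{k-1}\beta^i\sum_\ell x_{k\ell+i}\gamma^\ell$ decouples addition in base $\beta$ into $k=2$ independent additions in base $\gamma$, since $\beta^k=-b$ forces each carry to propagate in steps of $k$, keeping the residue classes of exponents modulo $k$ separate. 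Hence a $p$-local algorithm in base $\gamma$ lifts to a local algorithm in base $\beta$ acting on positions spaced $k$ apart, exactly as Algorithm~\textsl{GDE}($\sqrt[k]{-b}$) consults the neighbor $z_{j-k}$ rather than $z_{j-1}$. Thus every cardinality-$5$ alphabet of contiguous integers containing $0$ admits parallel addition in base $2\imath$.

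For the necessity direction (minimality of $5$), the minimal polynomial of $2\imath$ is $f(X)=X^2+4$, whence $|f(1)|=|1+4|=5$. Theorem~\ref{zdola} then gives $\#\mathcal{A}\geq|f(1)|=5$ for every alphabet allowing parallel addition in this base. Combining the two directions pins the minimal cardinality at $5$ and shows that every admissible alphabet of that size achieves it.

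There is no genuine obstacle here, as both halves are immediate consequences of results proved earlier. The only point demanding a moment of care is the parameter bookkeeping: one must read the ``$b+1$'' of Proposition~\ref{root-pos} as $|b|+1$ when $b<0$, and confirm that the relevant base after taking $k$-th powers is the \emph{negative} integer $\gamma=-4$ treated in Section~\ref{negative} rather than a positive one. A secondary check, needed only for the lower bound, is that one uses the genuine minimal polynomial $X^2+4$ of degree $2$ (not the reducible $X^2+4$ viewed as $X^k-b$), so that Lemma~\ref{minimal_form_positive} plays no role in this case.
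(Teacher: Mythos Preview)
Your proposal is correct and follows essentially the same approach as the paper: invoke Proposition~\ref{root-pos} (via the negative-integer-base case $\gamma=\beta^2=-4$) for sufficiency, and Theorem~\ref{zdola} with the minimal polynomial $f(X)=X^2+4$ for the lower bound $|f(1)|=5$. Your remark that the ``$b+1$'' in Proposition~\ref{root-pos} must be read as $|b|+1$ when $b<0$ is well taken; the paper's statement is indeed slightly imprecise there, though the intent is clear from its proof.
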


\bigskip
Similarly the complex base $\beta=\imath \sqrt{2}$ has $X^2+2$ for
minimal polynomial, so the lower bound given by
Theorem~\ref{zdola}, equal to $3$, is attained.

\begin{corollary}
In base $\beta=\imath \sqrt{2}$, parallel addition is possible on
any alphabet of minimal cardinality $3$ containing $0$.
\end{corollary}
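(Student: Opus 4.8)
The plan is to recognize $\beta=\imath\sqrt2$ as a radical of a negative integer and then to quote the results already established for such bases, exactly as in the two preceding corollaries. First I would compute $\beta^2=(\imath\sqrt2)^2=-2$, which exhibits $\beta$ in the form $\sqrt[k]{-b}$ with $k=2$ and $b=2$. This places the base squarely within the family treated earlier in this subsection, so that no new construction is required.

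Second, for the feasibility half of the statement I would invoke Proposition~\ref{root-pos}: since $\gamma=\beta^2=-2$ is a negative integer, the negative-integer-base result Theorem~\ref{neg-par0} applied to $\gamma$ shows that parallel addition is feasible on any alphabet of contiguous integers containing $0$ of cardinality $b+1=3$. Explicitly, Algorithm~\textsl{GDE}($\sqrt[k]{-b}$) with $k=2$, $b=2$ carries out the greatest-digit elimination from $\{0,1,2,3\}$ to $\{0,1,2\}$ (reaching $k=2$ positions back, reflecting the interleaving of base-$\beta$ strings into base-$\gamma$ strings), and Corollary~\ref{PossibleShifts} then transports parallel addition to every alphabet of cardinality $3$, the letters $d$ and $b-d$ being fixed by the conversion just as in the negative-integer-base case.

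Finally, for minimality I would observe that $X^2+2$ has no real root, hence is irreducible over $\mathbb{Q}$ and is the minimal polynomial $f$ of the algebraic integer $\beta$. Thus $|f(1)|=|1+2|=3$, and Theorem~\ref{zdola} yields the lower bound $\#\mathcal{A}\geq|f(1)|=3$; the sharper bound $|f(1)|+2$ does not apply, since $\beta$ is not a positive real number. Combining this with the construction above shows that $3$ is exactly the minimal cardinality, and that parallel addition is possible on any alphabet of cardinality $3$ containing $0$.

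I expect essentially no serious obstacle here, as the statement is a direct specialization of machinery already in place; the only point demanding care is the bookkeeping of the $\sqrt[k]{-b}$ convention — one must check that $b=2$ gives cardinality $b+1=3$ rather than an absolute-value artifact, and confirm that $X^2+2$ (rather than a proper factor of it) is genuinely the minimal polynomial, so that Theorem~\ref{zdola} legitimately applies.
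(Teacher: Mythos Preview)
Your proposal is correct and follows essentially the same approach as the paper: recognize $\beta^2=-2$ so that Proposition~\ref{root-pos} (via the negative-integer-base results) gives parallel addition on every alphabet of size $3$, and then observe that $X^2+2$ is the minimal polynomial so Theorem~\ref{zdola} gives the matching lower bound $|f(1)|=3$. The paper's own justification is the single sentence preceding the corollary, and your write-up simply makes those two ingredients explicit.
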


\section{Quadratic Pisot units bases}
\subsection{Base $\beta$ root of $X^2 = a X -1$}\label{Sec_Minus}

Among the quadratic Pisot units, we firstly take as base  $\beta$
the greater zero of the polynomial $f(X)=X^2 - aX + 1$ with $a
\geq 3$. Here, the canonical alphabet of the numeration system
related to this base by means of the R\'{e}nyi expansion (greedy
algorithm) is the set $\mathcal{C} = \{ 0,  \ldots, a-1 \}$ of
cardinality $\#\mathcal{C} = a$.

The numeration system given by this base $\beta$, alphabet
$\mathcal{C}$, and the R\'{e}nyi expansions is restricted only to
representations $x = \sum_j x_j \beta^j$, where not only the
digits must be from the alphabet $\mathcal{C}$, but also the
representations must avoid any string of the form
$(a-1)(a-2)^n(a-1)$ for any $n \in \mathbb{N}$. With this
admissibility condition, the numeration system has no redundancy.
In order to enable parallel addition, we always have to introduce
some level of redundancy into the numeration system. In this case,
we prove that it is sufficient to stay in the same alphabet
$\mathcal{A} := \mathcal{C} = \{ 0,  \ldots, a-1 \}$, we only need
to cancel the restricting condition given by the R\'{e}nyi
expansion, so that all the strings on $\mathcal{C}$ are allowed.

The lower bound on the cardinality of alphabet for parallel
addition  given by Theorem~\ref{zdola} for this base is equal to
$|f(1)|+2 = a$, which is just equal to the cardinality of
$\mathcal{C}$. We show below that the canonical alphabet
$\mathcal{C} =\{0,\ldots, a-1\}$ already allows parallel addition.
At the same time, the cardinality of this alphabet $\mathcal{C}$
is equal to $\lceil \beta \rceil = a$, and thus this example
demonstrates that also the lower bound given by
Theorem~\ref{zdola2} cannot be further improved in general.


According to Proposition~\ref{conversion}, we know that, for
parallel addition on the alphabet $\mathcal{A} = \{ 0,  \ldots,
a-1 \}$, it is enough to show that parallel conversion (greatest
digit elimination) from $\{0, \ldots, a \}$ to $\mathcal{A}$ is
possible. To perform conversion from $\mathcal{A}+ \mathcal{A}$ to
$\mathcal{A}$, we then use several times greatest digit
elimination (GDE). However, the repetition of GDE may increase the
width $p$ of the sliding window in the resulting $p$-local
function. To illustrate this phenomenon,  we provide below the
complete algorithm for parallel addition, which uses GDE just once
and then in Remark \ref{discuss} we compare the value of the width
$p$ for both  approaches.

\vskip0.3cm \hrule \vskip0.2cm

\noindent {\bf Algorithm~A}: Base $\beta$ satisfying
$\beta^2=a\beta-1$,  with $a \geq 3$, parallel conversion from
$\{0,\ldots, 2a-2\}$ to $\{0, \ldots, a\}$.

\vskip0.2cm \hrule \vskip0.2cm

\noindent{\sl Input}: a finite sequence of digits $(z_j)$   of $\{0,\ldots, 2a-2\}$, with $z = \sum z_j\beta^j$.\\
{\sl Output}: a finite sequence of digits $(z_j)$ of $\{0,
\ldots,a\}$, with $z = \sum z_j\beta^j$.

\vskip0.2cm

\noindent\texttt{for each $j$ in parallel do}\\
1. \hspace*{0.5cm} \texttt{case}
    $\left\{\begin{array}{l}
        z_j \geq a\  \\
        z_j = a-1\  \hbox{ \texttt{and}} \ z_{j+1} \geq a \ \hbox{ \texttt{and}}\ z_{j-1}\geq a \ \\
    \end{array} \right\} $\ \texttt{then} $q_j:=1$\\[2mm]
   \hspace*{1cm} \texttt{else} \qquad \qquad \qquad \qquad \qquad \qquad \qquad \qquad \qquad \qquad \qquad \, $q_j:=0$\\
2. \hspace*{0.5cm} $z_j := z_j - a q_j + q_{j+1} + q_{j-1}$

\vskip0.2cm \hrule \vskip0.2cm

\begin{proof}
For correctness of Algorithm~A, we have to show that the value
$z_j^{new}= z_j- a q_j + q_{j+1} + q_{j-1}$ belongs to the
alphabet $\{0,1, \ldots, a\}$ for each $j$. Let us denote $w_j:=
z_j - a q_j$, i.e., $z_j^{new} = w_j + q_{j+1} + q_{j-1}$.
\begin{itemize}
    \item If $z_j \in \{ 0,\ldots, a-2\} \cup \{a, \ldots, 2a-2\}$,  then $w_j \in \{ 0,\ldots a-2\}$,
    and therefore $z_j^{new} = w_j + q_{j+1} + q_{j-1} \in \{ 0,\ldots a\}$.
    \item When $z_j =a-1$ and both its neighbors $z_{j\pm 1}  \geq a$,
    then $w_j=-1$ and $q_{j+1} = q_{j-1} = 1$. Thus $z_j^{new} = 1$.
    \item If $z_j = a-1$, and $z_{j-1} < a$ or $z_{j+1} < a$, then
    $w_j = a-1$ and $q_{j+1}$ or $q_{j-1}=0$. Now $z_j^{new} \in \{a-1, a\}$.
\end{itemize}
The output value $z$ equals the input value $z$ thanks to the fact
that $\beta^{j+2} - a \beta^{j+1} + \beta^j = 0$ for any $j \in
\mathbb{Z}$. Besides, it is to be noted that $z_j = 0$ implies
$q_j = 0$, and therefore, the algorithm cannot assign a string of
non-zeros to a string of zeros.
\end{proof}

We then realize the greatest digit elimination in parallel. Let us
denote by $\beta^-$ the root larger than $1$ of the equation
$X^2=aX-1$, $a \ge 3$.

\vskip0.2cm \hrule \vskip0.2cm

\noindent {\bf Algorithm~\textsl{GDE}($\beta^-$)}: Base
$\beta=\beta^-$ satisfying $\beta^2=a\beta-1$, with $a \geq 3$,
parallel conversion (greatest digit elimination) from $\{
0,\ldots, a\}$ to $\{0, \ldots, a-1\} = \mathcal{A}$.

\vskip0.2cm \hrule \vskip0.2cm

\noindent{\sl Input}: a finite sequence of digits $(z_j)$ of  $\{0,\ldots, a \}$, with $ z= \sum z_j\beta^j$.\\
{\sl Output}: a finite sequence of digits $(z_j)$ of $\{0,\ldots,
a-1\}$, with $z = \sum z_j\beta^j$.

\vskip0.2cm

\noindent\texttt{for each $j$ in parallel do}\\
1. \hspace*{0.5cm} \texttt{case}
    $\left\{\begin{array}{l}
        z_j = a\  \\
        z_j =a-1\  \hbox{ \texttt{and}} \ \bigl( z_{j+1} \geq  a-1 \ \hbox{ \texttt{or}}\ z_{j-1}\geq a -1\Bigr) \ \\
        z_j =a-2\  \hbox{ \texttt{and}} \ z_{j+1} = a \ \hbox{ \texttt{and}} \ z_{j-1} = a\ \\
        z_j =a-2\  \hbox{ \texttt{and}} \ z_{j+1} = a \ \hbox{ \texttt{and}} \ z_{j-1} = a-1 \ \hbox{ \texttt{and}} \ z_{j-2} \geq a-1 \\
        z_j =a-2\  \hbox{ \texttt{and}} \ z_{j-1} = a \ \hbox{ \texttt{and}} \ z_{j+1}=a-1 \ \hbox{ \texttt{and}} \ z_{j+2} \geq a-1 \\
        z_j =a-2\  \hbox{ \texttt{and}} \ z_{j \pm 1} = a-1 \ \hbox{ \texttt{and}} \ z_{j \pm 2} \geq a-1 \ \\
    \end{array} \right\} $\\ \hspace*{1cm} \texttt{then} $q_j:=1$\\[1mm]
   \hspace*{1cm} \texttt{else} $q_j:=0$\\[2mm]
2. \hspace*{0.5cm} $z_j := z_j - a q_j + q_{j+1} + q_{j-1}$

\vskip0.2cm \hrule \vskip0.2cm

\begin{proof}
Let us denote   again $w_j := z_j- a q_j$, i.e., $z_j^{new} =
w_j + q_{j+1} + q_{j-1}$.
\begin{itemize}
    \item If $z_j \in \{ 0,\ldots, a-3\} \cup \{a\}$,  then $w_j \in \{0,\ldots a-3\}$,
     and therefore $z_j^{new} = w_j + q_{j+1} + q_{j-1} \in \{ 0,\ldots a-1\} = \mathcal{A}$.
    \item When $z_j = a-1$, and $z_{j - 1} \geq a-1$ or $z_{j + 1} \geq a-1$,
    then $w_j=-1$ and $q_{j+1} + q_{j-1} \in \{1,2\}$. Thus $z_j^{new}\in \{0,1\} \subset \mathcal{A}$.
    \item When $z_j = a-1$ and both its neighbors $z_{j \pm 1} < a-1$,
    then $w_j = a-1$ and $q_{j+1} = q_{j-1} = 0$. Now $z_j^{new} = a-1 \in \mathcal{A}$.
    \item If $z_j = a-2$ and $q_j =1$, then necessarily $q_{j \pm 1} = 1$.
    Since  $w_j=-2$, we get $z_j^{new}=0 \in \mathcal{A}$.
    \item If $z_j = a-2$ and $q_j =0$, then necessarily $q_{j-1}$ or $ q_{j+1}$
    equal $0$, and therefore $q_{j+1} + q_{j-1} \in \{0,1\}$. As $w_j=a-2$,
     the resulting $z_j^{new}\in \{a-2, a-1\} \subset \mathcal{A}$.
\end{itemize}
Again, the equation $\beta^{j+2} - a \beta^{j+1} + \beta^j = 0$
for  any $j \in \mathbb{Z}$ ensures that the output value $z$
equals the input value $z$. For $z_j = 0$ we always have $q_j =
0$, so the algorithm cannot assign a string of non-zeros to a
string of zeros.
\end{proof}

Now we can proceed by summarizing the algorithm for parallel
addition:

\vskip0.3cm \hrule \vskip0.2cm

\noindent {\bf Algorithm I}: Base $\beta$ satisfying
$\beta^2=a\beta-1$,  with $a \geq 3$, parallel addition on
alphabet $\mathcal{A} = \{0, \ldots, a-1\}$.

\vskip0.2cm \hrule \vskip0.2cm

\noindent{\sl Input}: two finite sequences of digits $(x_j)$ and
$(y_j)$
 of $\{0, \ldots, a-1 \}$, with $x = \sum x_j\beta^j$ and $y = \sum y_j\beta^j$.\\
{\sl Output}: a finite sequence of digits $(z_j)$ of $\{0,\ldots,
a-1\}$  such that $z = x+y = \sum z_j\beta^j$.

\vskip0.2cm

\noindent\texttt{for each $j$ in parallel do}\\
0. \hspace*{0.5cm} $v_j:=x_{j} +y_{j}$\\
1. \hspace*{0.5cm} use Algorithm~A with input $(v_j)$ and denote its output $(w_j)$\\
2. \hspace*{0.5cm} use Algorithm~\textsl{GDE}($\beta^-$) with
input $(w_j)$ and denote its output $(z_j)$

\vskip0.2cm \hrule \vskip0.2cm

\smallskip

\begin{theorem}\label{b-pos}
Let $\beta >1$ be a root of $X^2=aX-1$, with $a \geq 3$, $a \in
\N$, and let $\mathcal{A}$ be the canonical alphabet $\mathcal{A}
= \{0,\ldots,a-1\}$ associated with this base $\beta$. Addition in
${\rm Fin}_{\mathcal{A}}(\beta)$ is a $p$-local function with
$p=11$. The alphabet $\mathcal{A}$ is the smallest one for
parallel addition.
\end{theorem}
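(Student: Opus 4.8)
The statement bundles two independent claims: that addition on the canonical alphabet $\mathcal{A}=\{0,\ldots,a-1\}$ is $11$-local, and that no smaller alphabet of contiguous integers containing $0$ and $1$ works. The minimality claim I would dispatch first and immediately. Since $\beta>1$ is a real positive algebraic integer with minimal polynomial $f(X)=X^2-aX+1$, I compute $f(1)=1-a+1=2-a$, so $|f(1)|=a-2$ using $a\geq 3$. The second part of Theorem~\ref{zdola} then forces $\#\mathcal{A}\geq |f(1)|+2=a$; as the canonical alphabet has cardinality exactly $a$, it is the smallest possible, and nothing further is required here.

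For the locality claim, the plan is to read Algorithm~I as a composition. The digit-wise sum $v_j=x_j+y_j$ merely forms the input sequence, which lands in $\mathcal{A}+\mathcal{A}=\{0,\ldots,2a-2\}$; the conversion from $\mathcal{A}+\mathcal{A}$ to $\mathcal{A}$ is then Algorithm~A followed by Algorithm~\textsl{GDE}($\beta^-$). That each of these two conversions is correct — outputs lie in the stated alphabet, the represented value is preserved thanks to $\beta^2-a\beta+1=0$, and a block of zeros is sent to a block of zeros — has already been established in the two proofs preceding the theorem, so I would simply invoke those facts; the composition therefore realizes parallel addition on $\mathcal{A}$.

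The only genuine work is the bookkeeping of the window width, where I would be careful about the paper's convention that indices decrease from left to right, so anticipation $t$ reaches toward higher indices and memory $r$ toward lower ones. In both algorithms the carry $q_j$ is computed from a window of the input digits, and Step~2 forms $z_j^{new}=z_j-aq_j+q_{j+1}+q_{j-1}$, so $z_j^{new}$ depends on $q_{j-1},q_j,q_{j+1}$ and hence on a window one step wider on each side than the window defining a single $q_j$. For Algorithm~A each $q_j$ reads $(z_{j-1},z_j,z_{j+1})$, so $z_j^{new}$ reads $(z_{j-2},\ldots,z_{j+2})$: it is $(2,2)$-local, i.e.\ $5$-local. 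For Algorithm~\textsl{GDE}($\beta^-$) I would inspect the case list to confirm that no condition reaches beyond $z_{j\pm 2}$, whence each $q_j$ reads $(z_{j-2},\ldots,z_{j+2})$ and $z_j^{new}$ reads $(z_{j-3},\ldots,z_{j+3})$: it is $(3,3)$-local, i.e.\ $7$-local.

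Finally I would compose, using the standard fact that a $(t_1,r_1)$-local map followed by a $(t_2,r_2)$-local map is $(t_1+t_2,\,r_1+r_2)$-local: the anticipations and memories add, giving anticipation $2+3=5$, memory $2+3=5$, and hence $p=5+5+1=11$. The main obstacle, such as it is, is purely the combinatorial verification of the \textsl{GDE} case list to be sure its reach is exactly $z_{j\pm 2}$ and no more — once that is pinned down, the width count is forced and the value $p=11$ follows.
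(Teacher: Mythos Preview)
Your proposal is correct and follows essentially the same approach as the paper: the minimality is obtained from Theorem~\ref{zdola} via $|f(1)|+2=a$, and the locality $p=11$ is obtained by identifying Algorithm~A as $(2,2)$-local, Algorithm~\textsl{GDE}($\beta^-$) as $(3,3)$-local, and composing to get a $(5,5)$-local function. The paper's proof block actually records only the locality bookkeeping (the minimality having been noted in the surrounding text), but your inclusion of both parts is entirely in line with the paper's argument.
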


\begin{proof}
In Algorithm~A, the output digit $z_j^{new}$ depends on input
digits  $(z_{j+2}, \ldots, z_{j-2})$, so it is a $(2,2)$-local
function. In Algorithm~\textsl{GDE}($\beta^-$) the output digit
$z_j^{new}$ depends on input digits $(z_{j+3}, \ldots, z_{j-3})$,
and thus it is a $(3,3)$-local function. Algorithm~I is a
composition of Algorithms~A and \textsl{GDE}($\beta^-$), so the
resulting function is a composition of the two local functions,
$(2,2)$-local and $(3,3)$-local. Overall, the addition in base
$\beta$, fulfilling $\beta^2 = a\beta - 1$ for $a \geq 3$, as
performed by Algorithm~I, is a $(5,5)$-local function, i.e.,
$11$-local.
\end{proof}

\begin{remark}\label{discuss}
According to Proposition~\ref{conversion}, we need only
Algorithm~\textsl{GDE}($\beta^-$) for performing parallel addition
on $\mathcal{A}=\{0,\ldots, a-1\}$  in the base $\beta^2 = a\beta
-1$, with $a \geq 3$. In order to obtain the sum $x+y$, we would
apply Algorithm~\textsl{GDE}($\beta^-$)
 repeatedly $(a-1)$-times. The function performing parallel addition
 in this way would then be $(3a-3,3a-3)$-local. On the other hand,
 Algorithm~I, exploiting firstly Algorithm~A and then only once the
 Algorithm~\textsl{GDE}($\beta^-$), reduces the size of the sliding
  window, i.e., the parameters of the local function are only $(5,5)$.
\end{remark}


Now we are going to show that parallel addition for base  $\beta^2
= a\beta - 1$, with $a \geq 3$, $a \in \N$, is feasible also on
any alphabet of contiguous integers of cardinality $a$ containing
$\{-1,0,1\}$, of the form $\mathcal{A}_{-d} = \{-d, \ldots,  0,
\ldots, a-1-d\}$, for $1 \le d \le a-2$.

Let us observe that Algorithm~\textsl{GDE}($\beta^-$) applied to
the bi-infinite sequence $u=\lexp{\omega}h\bullet h^\omega$ gives
the bi-infinite sequence $\varphi(u) = \lexp{\omega}h\bullet
h^\omega = u$ for any $h\in\{ 0,\ldots, a-2\}$, and thus for any
$d \in \{1,\ldots,d-2\}$, both letters $d$ and $a-1-d$ are fixed
by $\varphi$. Corollary \ref{PossibleShifts} therefore implies
that the alphabet $\mathcal{A}_{-d} =\{-d, \ldots, a-1-d\}$ allows
parallelization of addition for any such $d \in \{1, \ldots,
d-2\}$. This fact, together with Theorem~\ref{zdola},
Proposition~\ref{opposite}, and Theorem~\ref{b-pos} enable us to
conclude this section with the following theorem.

\begin{theorem}
Let $\beta$ satisfy $\beta^2 = a\beta -1$, with $a \in \mathbb{N},
a \geq 3$. Parallel addition is possible on any alphabet of
contiguous integers containing $0$ of cardinality $a$, and this
cardinality is minimal.
\end{theorem}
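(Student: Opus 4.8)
The plan is to treat minimality and feasibility separately. Minimality is immediate: the minimal polynomial of $\beta$ is $f(X)=X^2-aX+1$, so $f(1)=2-a$ and $|f(1)|=a-2$. Since $\beta>1$ is a positive real algebraic integer, the refined bound of Theorem~\ref{zdola} applies and yields $\#\mathcal{A}\geq |f(1)|+2=a$, so no alphabet of cardinality below $a$ can support parallel addition.

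For feasibility, I would write an arbitrary alphabet of contiguous integers containing $0$ of cardinality $a$ in the normalized form $\mathcal{A}_{-d}=\{-d,\ldots,0,\ldots,a-1-d\}$ with $0\leq d\leq a-1$, and dispose of the three ranges of $d$ using results already proved. The case $d=0$ is exactly Theorem~\ref{b-pos}, which gives parallel addition on the canonical alphabet $\mathcal{A}_0=\{0,\ldots,a-1\}$ via Algorithm~I. For the interior range $1\leq d\leq a-2$, I would invoke Corollary~\ref{PossibleShifts} with $K=a$ and $\varphi$ the conversion from $\mathcal{A}_0\cup\{a\}$ to $\mathcal{A}_0$ realized by Algorithm~\textsl{GDE}($\beta^-$). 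Its hypothesis requires that both letters $d$ and $a-1-d$ be fixed by $\varphi$; this follows from the observation preceding the theorem, namely that Algorithm~\textsl{GDE}($\beta^-$) fixes the constant string $\lexp{\omega}h\bullet h^\omega$ for every $h\in\{0,\ldots,a-2\}$, and both $d$ and $a-1-d$ lie in $\{0,\ldots,a-2\}$ precisely when $1\leq d\leq a-2$.

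The remaining extremal case $d=a-1$ produces the opposite alphabet $\mathcal{A}_{-(a-1)}=-\mathcal{A}_0=\{-a+1,\ldots,0\}$, which is not reachable by the fixed-letter argument. Here I would instead apply Proposition~\ref{opposite} to the parallel conversion from $\mathcal{A}_0+\mathcal{A}_0$ to $\mathcal{A}_0$ guaranteed by Theorem~\ref{b-pos}: this yields a parallel conversion from $(-\mathcal{A}_0)+(-\mathcal{A}_0)$ to $-\mathcal{A}_0$, which is precisely parallel addition on $\mathcal{A}_{-(a-1)}$. Together the three ranges exhaust $0\leq d\leq a-1$, so parallel addition is possible on every such alphabet.

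The one point needing care is the fixed-letter hypothesis of Corollary~\ref{PossibleShifts}. I expect the main obstacle to be verifying that the branching in Step~1 of Algorithm~\textsl{GDE}($\beta^-$) assigns $q_j=0$ to each constant input $h^\omega$ with $h\leq a-2$ (so that $h$ is fixed), while for $h=a-1$ the first branch fires on both neighbors and the letter is genuinely moved to $1$. This boundary behaviour is exactly what confines the Corollary~\ref{PossibleShifts} argument to $1\leq d\leq a-2$ and forces the separate symmetric treatment of $d=a-1$ via Proposition~\ref{opposite}.
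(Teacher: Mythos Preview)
Your proposal is correct and follows essentially the same route as the paper: minimality from the $|f(1)|+2$ bound of Theorem~\ref{zdola}, the canonical alphabet $\mathcal{A}_0$ via Theorem~\ref{b-pos}, the interior shifts $1\le d\le a-2$ via the fixed letters of Algorithm~\textsl{GDE}($\beta^-$) and Corollary~\ref{PossibleShifts}, and the opposite alphabet $d=a-1$ via Proposition~\ref{opposite}. Your diagnosis of the boundary behaviour (that $h=a-1$ is sent to $1$ rather than fixed, forcing the separate treatment of that case) is exactly right.
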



\subsection{Base $\beta$ root of $X^2 =a X +1 $}\label{Sec_Plus}

Let us now study the numeration systems with base a quadratic
Pisot unit with  minimal polynomial  $f(X)=X^2 - aX -1$, with $a
\geq 1$. The canonical alphabet of the numeration system related
with this base by means of the R\'{e}nyi expansion (greedy
algorithm) is $\mathcal{C} = \{ 0, \ldots, a \}$, of cardinality
$\#\mathcal{C} = a+1$. The numeration system given by this type of
quadratic base $\beta$, alphabet $\mathcal{C}$, and the R\'{e}nyi
expansions is restricted only to such representations $x = \sum_j
x_j \beta^j$, where the digits are from the alphabet
$\mathcal{C}$, but the representations must avoid any string of
the form $a1$. This admissibility condition makes the numeration
system non-redundant.

It is known, for bases $\beta$ satisfying $\beta^2 = a\beta + 1$
with $a \geq 1$,  that the set of real numbers with finite greedy
expansion $\langle x\rangle_\beta$ is closed under addition and
subtraction~\cite{BFGK}. Therefore,
$$\{ x\geq 0\ | \ \langle x\rangle_\beta \ \hbox {is finite}\ \} =
Fin_{\mathcal{A}}(\beta) \quad \hbox{for any} \ \mathcal{A}
\subset \mathbb{N}, \, \mathcal{A} \supset \mathcal{C}$$ and
$$\{ x \in \mathbb{R}\ | \ \langle |x|\rangle_\beta \ \hbox {is finite}\ \} =
Fin_{\mathcal{A}}(\beta) \quad \hbox{for any} \ \mathcal{A}
\subset \mathbb{Z}, \, \mathcal{A} \supset \mathcal{C}
\cup\{-1\}.$$

In order to obtain an algorithm for parallel addition, we must
have some  redundancy in the numeration system. As shown in
Section~\ref{Sec_Minus}, for the cases of base $\beta$ satisfying
$\beta^2 = a \beta - 1$, it was sufficient to drop the one
admissibility condition (given by the R\'{e}nyi expansion), and
parallelization was already possible (without adding any more
elements into the alphabet $\mathcal{C}$). The situation is not
that simple for the bases satisfying $\beta^2 = a \beta +1$.
Nevertheless, addition in these two families of quadratic units is
connected.

\begin{proposition}\label{Prop_From_Plus_To_Minus}
Let $\beta>1$ be a zero of the polynomial $X^2-aX-1$ with $a \geq
1$, and let $\gamma>1$ be a zero of the polynomial
$X^2-(a^2+2)X+1$. If there exists an alphabet $\mathcal{A}$ and a
$p$-local function performing  in ${\rm Fin}_\mathcal{A}(\gamma)$
addition in parallel, then there exists a $(2p-1)$-local function
performing in  ${\rm Fin}_\mathcal{A}(\beta)$  addition in
parallel.
\end{proposition}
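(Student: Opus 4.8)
The plan is to exploit the algebraic identity $\gamma=\beta^2$ and to reduce addition in base $\beta$ to two independent copies of the given parallel addition in base $\gamma$, one acting on the even-indexed digits and one on the odd-indexed digits. First I would verify that $\gamma=\beta^2$. Since $\beta$ and its conjugate $\beta'$ are the roots of $X^2-aX-1$, we have $\beta+\beta'=a$ and $\beta\beta'=-1$, hence $\beta^2+(\beta')^2=(\beta+\beta')^2-2\beta\beta'=a^2+2$ and $\beta^2(\beta')^2=1$. Thus $\beta^2$ and $(\beta')^2=\beta^{-2}$ are exactly the two roots of $X^2-(a^2+2)X+1$, and since $\beta>1$ the larger root is $\gamma=\beta^2$.

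Next I would set up the parity decomposition. For any $x=\sum_j x_j\beta^j$ in ${\rm Fin}_{\mathcal{A}}(\beta)$, grouping powers by parity gives $x=\sum_k x_{2k}\gamma^k+\beta\sum_k x_{2k+1}\gamma^k$, so both $X_e:=\sum_k x_{2k}\gamma^k$ and $X_o:=\sum_k x_{2k+1}\gamma^k$ lie in ${\rm Fin}_{\mathcal{A}}(\gamma)$. Writing $s_j:=x_j+y_j\in\mathcal{A}+\mathcal{A}$ for the digitwise sum of two summands, I obtain $x+y=\sum_k s_{2k}\gamma^k+\beta\sum_k s_{2k+1}\gamma^k$; that is, the even-indexed digits of $x+y$ form a $(\gamma,\mathcal{A}+\mathcal{A})$-representation of $X_e+Y_e$, and the odd-indexed digits a representation of $X_o+Y_o$.

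I would then apply the given $p$-local addition in base $\gamma$, realized by some $\Phi:(\mathcal{A}+\mathcal{A})^p\to\mathcal{A}$ with memory $r$, anticipation $t$, and $p=r+t+1$, separately to the even subsequence $(s_{2k})_k$ and to the odd subsequence $(s_{2k+1})_k$. Because the base-$\gamma$ conversion preserves both the numerical value and the finiteness of the support, the two resulting digit sequences over $\mathcal{A}$ recombine, via $z=\sum_k z_{2k}\gamma^k+\beta\sum_k z_{2k+1}\gamma^k$, into a $(\beta,\mathcal{A})$-representation of $x+y$; this yields precisely the value preservation and finiteness required by Definition~\ref{digitsetconv}.

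Finally I would count the locality. Using the paper's decreasing-index convention, the even-part output at $\gamma$-position $k$ is $\Phi(s_{2k+2t}\cdots s_{2k-2r})$, and likewise for the odd part, so uniformly in the $\beta$-indexing the output digit is $z_j=\Phi(s_{j+2t},s_{j+2t-2},\ldots,s_{j-2r})$, a function of the $s$-digits of matching parity lying in the window $[\,j-2r,\;j+2t\,]$. This window has width $2t+2r+1=2(p-1)+1=2p-1$, so the combined map is $(2t,2r)$-local, i.e.\ $(2p-1)$-local; concretely one defines $\Psi:(\mathcal{A}+\mathcal{A})^{2p-1}\to\mathcal{A}$ by applying $\Phi$ to the $p$ entries of matching parity, and $\Psi(0^{2p-1})=\Phi(0^p)=0$ guarantees that the support stays finite. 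The only point demanding care is this index bookkeeping: verifying that stretching a base-$\gamma$ window by the factor two coming from $\gamma=\beta^2$ gives width exactly $2p-1$ rather than $2p$. No genuine difficulty arises, however, precisely because the two parities never interact.
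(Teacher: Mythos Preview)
Your proof is correct and follows essentially the same approach as the paper: the key observation is that $\gamma=\beta^2$, after which one interleaves two independent copies of the base-$\gamma$ parallel addition along the even and odd $\beta$-indices. The paper's own proof consists of exactly this observation together with a citation to Theorem~1 of \cite{Fr}, whose content (for $k=2$) is precisely the parity-decomposition argument you have written out in full, including the locality count $2(r+t)+1=2p-1$.
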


\begin{proof}
It is enough to realize that $\gamma=\beta^2$, and to apply
Theorem~1 from \cite{Fr}.
\end{proof}

\begin{remark}\label{upper_bound}
According to the previous Section~\ref{Sec_Minus}, we know that
addition in base $\gamma$, the zero of the polynomial $X^2
-(a^2+2)X +1$, can be performed in parallel on alphabet $\{-d,
\ldots, a^2+1-d\}$ for any $d \in \{ 0, \ldots, a^2\}$. Therefore,
we immediately obtain an upper bound  $a^2+2$ on the cardinality
of the alphabet allowing parallel addition in base $\beta$, the
zero of the polynomial $X^2 -aX -1$.
\end{remark}

In general, the upper bound given in Remark~\ref{upper_bound} is
too rough. But for $a=1$, i.e., for the base the golden
ratio, it gives the precise value of cardinality of the minimal
alphabet for parallel addition in this base, namely the
cardinality $\#\mathcal{A} = 3$.

\begin{corollary}\label{Golden_ratio}
Let $\beta = \frac{1+\sqrt{5}}{2}$ be the golden ratio, zero of
$X^2-X-1$. Addition in this base $\beta$ can be performed in
parallel on alphabet $\mathcal{A} = \{ 0,1,2\}$, and also on
alphabet $\mathcal{A} = \{-1,0,1\}$.  Both these alphabets are
minimal.
\end{corollary}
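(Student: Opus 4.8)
The plan is to sandwich the minimal cardinality between a lower bound of $3$ obtained from Theorem~\ref{zdola} and an upper bound of $3$ obtained by transferring the algorithms of Section~\ref{Sec_Minus} through Proposition~\ref{Prop_From_Plus_To_Minus}.

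First I would settle minimality, i.e.\ the lower bound. The golden ratio $\beta$ is a positive real algebraic integer whose minimal polynomial is $f(X)=X^2-X-1$, so $f(1)=-1$ and $|f(1)|=1$. Since $\beta>1$, the second part of Theorem~\ref{zdola} applies and yields $\#\mathcal{A}\geq |f(1)|+2 = 3$. Thus no alphabet of contiguous integers of cardinality smaller than $3$ can support parallel addition in this base, which is precisely the minimality assertion; it then remains only to realize the value $3$.

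Next I would exhibit the two alphabets of cardinality $3$. Here I use that $\beta$ belongs to the family $X^2-aX-1$ with $a=1$, so its square $\gamma=\beta^2$ is a zero of $X^2-(a^2+2)X+1 = X^2-3X+1$, which is exactly the quadratic Pisot unit treated in Section~\ref{Sec_Minus} with parameter $3$. By Remark~\ref{upper_bound}, parallel addition in base $\gamma$ is possible on every alphabet $\{-d,\ldots,a^2+1-d\}=\{-d,\ldots,2-d\}$ with $d\in\{0,\ldots,a^2\}=\{0,1\}$; in particular on $\{0,1,2\}$ (taking $d=0$) and on $\{-1,0,1\}$ (taking $d=1$). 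Since $\gamma=\beta^2$, the hypothesis of Proposition~\ref{Prop_From_Plus_To_Minus} is met, so each $p$-local addition algorithm on an alphabet $\mathcal{A}$ in base $\gamma$ induces a $(2p-1)$-local addition algorithm on the same alphabet $\mathcal{A}$ in base $\beta$. This transfers parallel addition to the golden-ratio base on both $\{0,1,2\}$ and $\{-1,0,1\}$.

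Combining the two bounds concludes the argument: cardinality $3$ is necessary by the lower bound and achieved by the two exhibited alphabets, hence minimal. I do not anticipate a genuine obstacle, as all the machinery has been built upstream; the only point needing care is the bookkeeping in Remark~\ref{upper_bound}, namely checking that $a^2+1=2$ and $a^2=1$ for $a=1$, so that the admissible shifts $d$ range over $\{0,1\}$ and cover exactly the two target alphabets. The pleasant feature making this corollary clean is that for $a=1$ the generally crude upper bound $a^2+2$ collapses to $3$ and thereby meets the lower bound $|f(1)|+2=3$.
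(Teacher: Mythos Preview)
Your proposal is correct and follows essentially the same approach as the paper: the corollary is presented immediately after Remark~\ref{upper_bound} precisely because, for $a=1$, the upper bound $a^2+2=3$ from Proposition~\ref{Prop_From_Plus_To_Minus} combined with Section~\ref{Sec_Minus} meets the lower bound $|f(1)|+2=3$ from Theorem~\ref{zdola}. The paper does not give a formal proof environment for this corollary, leaving it to follow from the surrounding discussion (and additionally citing \cite{Fr} for the non-sufficiency of $\{0,1\}$ and \cite{FrPeSv} for the case $\{-1,0,1\}$), so your explicit write-up simply fills in what the paper leaves implicit.
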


Let us mention that this result for the alphabet $\{-1, 0, 1\}$ is
already stated in \cite{FrPeSv}. Non-sufficiency of the alphabet
$\{0,1\}$ for
parallel addition is stated in \cite{Fr}.\\

In the sequel, we are going to consider only parameters $a \geq
2$. The lower bound on the cardinality of the alphabet of
contiguous integers allowing parallel addition, given by
Theorem~\ref{zdola} for bases $\beta$ being zeros of polynomials
$X^2-aX-1$, is equal to $|f(1)|+2=a+2$. We show that,
 in these cases, parallel addition is doable on any alphabet of contiguous
 integers containing $0$ of cardinality $a+2$.

For short, the positive zero of $X^2-aX-1$ is denoted by
$\beta^+$.
\vskip0.3cm \hrule \vskip0.2cm

\noindent {\bf Algorithm~\textsl{GDE}($\beta^+$)}: Base
$\beta=\beta^+$ satisfying $\beta^2=a\beta+1$, $a \ge 2$, $a\in
\mathbb{N}$, parallel conversion
 (greatest digit elimination) from $\{0,\ldots, a+2\}$ to $\mathcal{A} = \{0, \ldots, a+1\}$.

\vskip0.2cm \hrule \vskip0.2cm

\noindent{\sl Input}: a finite sequence of digits $(z_j)$ of
$\{0,\ldots, a+2\}$, with $z = \sum z_j\beta^j$.\\
{\sl Output}: a finite sequence of digits $(z_j)$ of $\{0, \ldots,
a +1\}$, with $z = \sum z_j\beta^j$.

\vskip0.2cm

\noindent\texttt{for each $j$ in parallel do}\\
1. \hspace*{0.5cm} \texttt{case}
    $\left\{\begin{array}{l}
        z_j = a+2\ \\
        z_j =a+1\ \hbox{ \texttt{and}} \ (z_{j+1} = 0 \ \hbox{ \texttt{or}} \ z_{j-1}\geq a+1 ) \ \\
        z_j =a\ \hbox{ \texttt{and}} \ z_{j+1} = 0 \ \hbox{ \texttt{and}} \ z_{j-1}\geq a+1 \ \\
    \end{array} \right\} $\ \texttt{then} \, $q_j:=1$\\
   \hspace*{1cm} \texttt{if} \qquad \ $z_j=0$ \texttt{and} $z_{j+1} \ge a+1$ \texttt{and} $z_{j-1}\leq  a$ \qquad \qquad \ \texttt{then} \, $q_j:=-1$\\[2mm]
   \hspace*{1cm} \texttt{else} \qquad \qquad \qquad \qquad \qquad \qquad \qquad \qquad \qquad \qquad \qquad \qquad \ \,\, $q_j:=0$\\
2. \hspace*{0.5cm} $z_j := z_j - a q_j - q_{j+1} + q_{j-1}$

\vskip0.2cm \hrule \vskip0.2cm

\begin{proof}
Let us denote again $w_j = z_j- a q_j$, and $z_j^{new} = w_j -
q_{j+1} + q_{j-1}$.
\begin{itemize}
    \item If $z_j=a+2$, then $w_j=2$. Since $q_{j+1} \geq 0$, we
     have $-q_{j+1}+q_{j-1} \in \{-2, \ldots, 1\}$, and consequently,
     $z_j^{new} \in \{0, \ldots, 3\} \subset \{0, \ldots, a+1\} = \mathcal{A}$, using the fact that $a \ge 2$.
    \item For $z_j=a+1$ and $z_{j+1}=0$, we get $w_j=1$. As $q_{j+1}=0$,
    then $z_j^{new} \in \{0, 1, 2\} \subset \mathcal{A}$.
    \item For $z_j=a+1$ and $z_{j-1} \geq a+1$, we obtain again $w_j=1$.
    Since $q_{j-1} \ge 0$, then $-q_{j+1}+q_{j-1} \in \{-1, \ldots, 2\}$,
     and consequently, $z_j^{new} \in \{0, \ldots, 3\} \subset \{0, \ldots, a+1\} = \mathcal{A}$, as $a \geq 2$.
    \item If $z_j=a+1$ and $z_{j+1} \geq 1$ and $z_{j-1} \leq a$,
    then $w_j=a+1$, $q_{j-1} \le 0$ and $q_{j+1} \geq 0$. Therefore, $z_j^{new} \in \{a-1, a, a+1\} \subset \mathcal{A}$.
    \item In the case of $z_j=a$ and $z_{j+1}=0$ and $z_{j-1} \geq a+1$,
    we obtain $w_j=0$. Since $q_{j+1} \le 0$ and $q_{j-1} \ge 0$, the
    resulting $z_j^{new} \in \{0, 1, 2\} \subset \mathcal{A}$.
    \item When $z_j=a$ and $z_{j+1} \geq 1$, then $w_j=a$. Since
    $q_{j+1}\ge 0$ and $q_{j-1} \ge 0$, we obtain $z_j^{new} \in \{a-1, a, a+1\} \subset \mathcal{A}$.
    \item When $z_j=a$ and $z_{j-1} \leq a$, then again $w_j=a$.
    This time, $q_{j-1}= 0$, so consequently, $z_j^{new} \in \{a-1, a, a+1\} \subset \mathcal{A}$.
    \item If $z_j=0$ and $z_{j+1} \geq a+1 $ and $z_{j-1} \leq a$,
    then $w_j=a$. Since $q_{j+1}\ge 0$ and $q_{j-1} \geq  0$, we obtain $z_j^{new} \in \{a-1, a, a+1\} \subset \mathcal{A}$.
    \item For $z_j=0$ and $z_{j+1} \leq a $, we get $w_j=0$, $q_{j+1} \leq 0$,
    and $q_{j-1} \ge 0$. Therefore, the resulting $z_j^{new} \in \{0, 1, 2\} \subset \mathcal{A}$.
    \item If $z_j=0$ and $z_{j-1} \ge a+1$, then $w_j=0$ and $q_{j-1}=1$.
    Consequently, $z_j^{new} \in \{0, 1, 2\} \subset \mathcal{A}$.
    \item For the cases when $z_j \in \{1, \ldots, a-1\}$, we have $q_j=0$
    and $q_{j-1} \geq 0$, so consequently, $z_j^{new} \in \{0, \ldots, a+1\} = \mathcal{A}$.
\end{itemize}
Note that, for $z_j=0$, we can only obtain $q_j \neq 0$ when its
neighbor $z_{j+1}$ is greater than zero. Therefore, it is ensured
that the algorithm cannot assign a string of non-zeros to a string
of zeros. The output value $z$ is equal to the input value $z$
thanks to the fact that the base $\beta$ satisfies the equation
$\beta^{j+2} = a \beta^{j+1} + \beta^j$ for any $j \in
\mathbb{Z}$. The output digit $z_j^{new}$ depends on input digits
$(z_{j+2}, \ldots, z_{j-2})$, so this conversion from $\{0,
\ldots, a+2\}$ to $\mathcal{A} = \{0, \ldots, a+1\}$ is a
$(2,2)$-local function.
\end{proof}

Using Proposition \ref{conversion}  we can conclude that addition
in ${\rm Fin}_{\mathcal{A}}(\beta)$ can be performed
 in parallel on the alphabet $\{0,1,\ldots,
 a+1\}$.

\bigskip

Let us now consider alphabet containing positive and negative
digits. For any
 $d \in \{1, \ldots, a\}$, denote
 $$\mathcal{A}_{-d} = \{-d, \ldots, 0, \ldots, a-d +1\}\,.$$

The previous Algorithm~\textsl{GDE}($\beta^+$) applied to the
infinite sequence $u=\lexp{\omega}h\bullet h^\omega$ gives the
infinite sequence $\varphi(u) = \lexp{\omega}h\bullet h^\omega =
u$ for any $h\in\{ 0,\ldots, a\}$. Thus, for any $d \in \{
1,2,\ldots, a\}$, both letters $d$  and $a+1-d$ are fixed by
$\varphi$. According to Corollary~\ref{PossibleShifts}, the
alphabet $\mathcal{A}_{-d}$ allows parallelism of addition.
Summarizing this reasoning, together with Algorithm
GDE($\beta^+$), Corollary~\ref{Golden_ratio}, and
Proposition~\ref{opposite}, we obtain the following result.
\begin{theorem}
Let $\beta$ satisfy $\beta^2 = a\beta +1$, with $a \geq 1, a\in
\mathbb{N}$. Parallel addition is possible on any alphabet of
contiguous integers containing $0$, such that its cardinality is
$a+2$. The cardinality $a+2$ is minimal.
\end{theorem}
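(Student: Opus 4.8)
The plan is to prove the two halves of the statement separately: the lower bound $a+2$ is essentially already in hand, so the real work is exhibiting a parallel addition on \emph{every} alphabet of contiguous integers of cardinality $a+2$ containing $0$, and for this I would route everything through the single algorithm \textsl{GDE}($\beta^+$) and the reduction machinery of Section~\ref{sec:conversion}. For minimality, the minimal polynomial is $f(X)=X^2-aX-1$, so $f(1)=1-a-1=-a$ and $|f(1)|=a$; since $\beta>1$ is a real algebraic integer, the sharpened bound of Theorem~\ref{zdola} gives $\#\mathcal{A}\geq |f(1)|+2=a+2$ for any admissible alphabet containing $0$ and $1$ (and the same bound for the remaining, non-positive alphabets follows by negating via Proposition~\ref{opposite}). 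Hence $a+2$ is optimal and it remains only to realize addition on each alphabet of that size.

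I would first dispose of the base case $a=1$, the golden ratio, where Corollary~\ref{Golden_ratio} already records parallel addition on both $\{0,1,2\}$ and $\{-1,0,1\}$, i.e.\ on all cardinality-$3$ alphabets containing $0$. So I assume $a\geq 2$ for the rest. The alphabets to cover are exactly $\mathcal{A}_{-d}=\{-d,\ldots,0,\ldots,a+1-d\}$ for $d=0,1,\ldots,a+1$, which are the $a+2$ contiguous alphabets of size $a+2$ containing $0$.

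The core step handles $d=0$ and the interior shifts at once. Algorithm~\textsl{GDE}($\beta^+$) is a $(2,2)$-local conversion from $\{0,\ldots,a+2\}$ onto $\mathcal{A}_0=\{0,\ldots,a+1\}$, so Proposition~\ref{conversion}(1) immediately gives parallel addition on $\mathcal{A}_0$, settling $d=0$. For the shifted alphabets I would use that this conversion fixes every constant word $\lexp{\omega}h\bullet h^\omega$ with $h\in\{0,\ldots,a\}$: therefore, for each $d\in\{1,\ldots,a\}$ both boundary digits $d$ and $(a+1)-d$ fall in $\{0,\ldots,a\}$ and are fixed, so Corollary~\ref{PossibleShifts} (with $K=a+2$) yields parallel addition on $\mathcal{A}_{-d}$.

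The one case that slips past this argument, and the point I expect to need care, is $d=a+1$, the fully non-positive alphabet $\{-(a+1),\ldots,0\}=-\mathcal{A}_0$: here Corollary~\ref{PossibleShifts} would require the digit $a+1$ to be fixed, but $a+1$ is precisely the digit the algorithm carries away rather than fixing, so the corollary does not apply. I would instead derive this alphabet from the $d=0$ case by symmetry: parallel addition on $\mathcal{A}_0$ is the conversion $\mathcal{A}_0+\mathcal{A}_0\to\mathcal{A}_0$, and Proposition~\ref{opposite} turns it into the conversion $(-\mathcal{A}_0)+(-\mathcal{A}_0)\to -\mathcal{A}_0$, which is exactly parallel addition on $-\mathcal{A}_0$. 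Assembling the three ranges $d=0$, $1\leq d\leq a$, and $d=a+1$ exhausts all $a+2$ alphabets, and together with the minimality bound this completes the proof.
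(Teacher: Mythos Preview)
Your proof is correct and follows essentially the same route as the paper: the case $a=1$ via Corollary~\ref{Golden_ratio}, the non-negative alphabet via Algorithm~\textsl{GDE}($\beta^+$) together with Proposition~\ref{conversion}, the interior shifts $1\le d\le a$ via the fixed letters $\{0,\ldots,a\}$ and Corollary~\ref{PossibleShifts}, the fully non-positive alphabet via Proposition~\ref{opposite}, and minimality via Theorem~\ref{zdola}. Your write-up is in fact more explicit than the paper's in singling out why $d=a+1$ escapes Corollary~\ref{PossibleShifts} (the digit $a+1$ is not fixed by the GDE) and must be handled by symmetry instead.
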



\section{Rational Bases}\label{rat}

Let us now consider the base $\beta=\pm a/b$, with $a$, $b$ being
co-prime positive integers fulfilling $a > b \ge 1$. When $b=1$,
we obtain the   case of positive integer base $\beta = a \in
\mathbb{N}$, $a \geq 2$, or the case of negative integer base
$\beta = -a \in \mathbb{N}$, $a \geq 2$ with the minimal
cardinality of alphabet for parallel addition being equal to
$a+1$, see Sections~\ref{positive} and \ref{negative}.
 For $b \ge 2$, the base $\beta$ is an algebraic number which is
not an algebraic integer, so Theorem~\ref{zdola} cannot be applied
here to establish a lower bound on the cardinality of alphabet for
parallel addition. Theorem~\ref{zdola2} can be used for $\beta=
a/b$, however it is not very useful here either; the lower bound
given there is
   equal to $\lceil  a/b \rceil$, which is  too low for parallel addition, as is shown below.

In general, an alphabet $\mathcal{A}$ allows parallel addition
only if the numbers with finite representation are closed under
addition, in particular, any non-negative integer must  have a
finite representation. This requirement already forces the
alphabet to be big enough. By a modification of the Euclidean
division algorithm, any non-negative integer can be given a unique
finite expansion in base $\beta=a/b$, and any integer can be given
a unique finite expansion in base $\beta=-a/b$, on the alphabet
$\mathcal{C}=\{0,\ldots,a-1\}$, see~\cite{FrougnyKlouda} and
\cite{AkiFroSaka}.

As we shall see, even this alphabet is to small. For both positive
base $\beta= a/b$ and  negative base $\beta= -a/b$, the
cardinality of alphabet  actually needed for parallel addition  is
at least $a+b$. In the alphabet $\mathcal{A}=\{0,1,\ldots,
a+b-1\}$, we can perform parallel addition in the base $\beta=
a/b$ and $\beta= -a/b$ as well.

But surprisingly,  these two types of bases differ substantially
if we consider alphabets containing $\{-1,0,1\}$.


\subsection{Positive Rational Base}\label{rat-pos}


\begin{proposition}\label{rat-par}
Parallel addition in base $\beta=a/b$, with $a$ and $b$ co-prime
positive integers such that $a > b \ge 1$, is possible on
$\mathcal{A} = \{0,\ldots,a+b-1\}$.
\end{proposition}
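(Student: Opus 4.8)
The plan is to invoke Proposition~\ref{conversion} to reduce the claim to a single greatest digit elimination. Since the alphabet $\mathcal{A}=\{0,\ldots,a+b-1\}$ has minimum $m=0$, Point~1 of Proposition~\ref{conversion} tells us that parallel addition on ${\rm Fin}_{\mathcal{A}}(\beta)$ is equivalent to the existence of a parallel digit set conversion (GDE) from $\mathcal{A}\cup\{M+1\}=\{0,\ldots,a+b\}$ to $\mathcal{A}=\{0,\ldots,a+b-1\}$. So it suffices to exhibit a local function realizing this GDE.

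To build it, I would exploit the defining relation $b\beta=a$, which gives $a\beta^j=b\beta^{j+1}$ for every $j$. This is exactly the rewriting rule behind a carry: removing $a$ from the coefficient of $\beta^j$ and adding $b$ to the coefficient of $\beta^{j+1}$ leaves the represented value unchanged. Concretely, for an input digit string $(z_j)$ with $z_j\in\{0,\ldots,a+b\}$, I would set the carry
$$q_j:=\begin{cases}1 & \text{if } z_j\geq a,\\ 0 & \text{otherwise},\end{cases}$$
and define the output digit by $z_j^{new}:=z_j-a q_j+b q_{j-1}$. Value preservation is then the routine index shift $b\sum_j q_{j-1}\beta^j=\sum_i q_i\,b\beta^{i+1}=\sum_i q_i\,a\beta^i=a\sum_j q_j\beta^j$, so the two carry contributions cancel. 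Since $q_{j-1}$ depends only on $z_{j-1}$ and $q_j$ only on $z_j$, the digit $z_j^{new}$ is a function of the window $(z_j,z_{j-1})$; this makes the conversion a $(0,1)$-local function ($p=2$). Moreover $z_j=0$ forces $q_j=0$, so a block of zeros is sent to zeros and finite support is preserved.

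The only real verification is that the output lands in $\mathcal{A}$, and the point there is the slack $a-1\geq b$ coming from $a>b\geq 1$. Writing $w_j:=z_j-a q_j$, if $z_j\geq a$ then $q_j=1$ and $w_j=z_j-a\in\{0,\ldots,b\}$ (using $z_j\leq a+b$), while if $z_j<a$ then $q_j=0$ and $w_j=z_j\in\{0,\ldots,a-1\}$; in both cases $w_j\in\{0,\ldots,a-1\}$ because $b\leq a-1$. Adding the incoming carry, $z_j^{new}=w_j+b q_{j-1}\in\{0,\ldots,a-1+b\}=\{0,\ldots,a+b-1\}=\mathcal{A}$, as desired. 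I expect this range check to be the crux of the argument, whereas choosing the correct carry direction (the carry generated at position $j$ feeds position $j+1$, so $z_j^{new}$ receives $+b q_{j-1}$) is the one place where sign and index bookkeeping must be handled carefully.
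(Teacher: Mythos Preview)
Your proof is correct and follows essentially the same approach as the paper: the same GDE algorithm (set $q_j=1$ iff $z_j\geq a$, then $z_j^{new}=z_j-aq_j+bq_{j-1}$), the same range check via $w_j\in\{0,\ldots,a-1\}$, and the same appeal to Proposition~\ref{conversion}. Your write-up is in fact slightly more explicit about locality (the conversion is $(0,1)$-local) and about why $b\leq a-1$ is the key inequality.
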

\begin{proof}
We give a parallel algorithm Algorithm~\textsl{GDE}($a/b$):
$\{0,\ldots, a+b \} \rightarrow \{0,\ldots, a+b -1\}$ for greatest
digit elimination.
\vskip0.3cm \hrule \vskip0.2cm

\noindent {\bf Algorithm~\textsl{GDE}($a/b$)}: Base $\beta= a/b$,
with $a > b \geq 1$, $a$ and $b$ co-prime positive integers,
parallel conversion (greatest digit elimination) from $\{0,\ldots,
a+b \}$ to $\{0,\ldots, a+b -1\}$.

\vskip0.2cm \hrule \vskip0.2cm

\noindent{\sl Input}: a finite sequence of digits $(z_j)$ of $\{0,\ldots, a+b \}$, with $z= \sum z_j\beta^j$.\\
{\sl Output}: a finite sequence of digits $(z_j)$ of $\{0,\ldots,
a+b -1\}$, with $z= \sum z_j\beta^j$.

\vskip0.2cm

\noindent\texttt{for each $j$ in parallel do}\\
1. \hspace*{0.5cm} \texttt{if} $ a \le z_j \le a+b$\ \qquad \texttt{then} $q_j:=1$\\[2mm]
   \hspace*{1cm} \texttt{else}   \qquad \qquad \qquad  \qquad \qquad $q_j:=0$\\
2. \hspace*{0.5cm} $z_j := z_j - a q_j + b q_{j-1}$

\vskip0.2cm \hrule \vskip0.2cm

Denoting $w_j:=z_j-aq_j$, we clearly obtain $0 \le w_j \le a-1$.
Thus, after Step 2 of the algorithm, $z_j^{new} = w_j + b q_{j-1}$
belongs to  $\{0,\ldots, a+b-1\}$. This algorithm assigns $q_j
\neq 0$ only in the cases of $z_j \neq 0$, so it cannot produce a
string of non-zeros from a string of zeros. The output value $z$
equals the input value $z$ thanks to the fact that $b \beta^{j+1}
- a \beta^j = 0$ for
 any $j \in \mathbb{Z}$. So the algorithm is correct. \\
Thus the result follows from Algorithm~\textsl{GDE}($a/b$) and
Proposition~\ref{conversion}.
\end{proof}

\begin{proposition}
In base $\beta=a/b$, with $a$ and $b$ co-prime positive integers
such that $a > b \ge 1$, parallel addition is possible on any
alphabet of cardinality $a+b$ $\mathcal{A}_{-d}=\{-d,
\ldots,0,\ldots, a+b-d-1 \}$ with $ b \leq d \leq a-1$.
\end{proposition}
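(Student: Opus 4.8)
The plan is to reduce the claim to Corollary~\ref{PossibleShifts}, using the greatest digit elimination already constructed in the proof of Proposition~\ref{rat-par}. Put $K = a+b$, so that $\mathcal{A}_0 = \{0, \ldots, a+b-1\}$ and $\mathcal{A}_0 \cup \{K\} = \{0, \ldots, a+b\}$, and let $\varphi$ be the $p$-local function realized by Algorithm~\textsl{GDE}($a/b$), which converts $\mathcal{A}_0 \cup \{K\}$ into $\mathcal{A}_0$ in parallel. Corollary~\ref{PossibleShifts} then guarantees parallel addition on $\mathcal{A}_{-d} = \{-d, \ldots, 0, \ldots, a+b-1-d\}$ as soon as both letters $d$ and $K-1-d = a+b-1-d$ are fixed by $\varphi$.

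Thus the only thing to determine is which letters are fixed by $\varphi$, i.e. for which $h$ one has $\Phi(h^p) = h$. I would feed Algorithm~\textsl{GDE}($a/b$) the constant bi-infinite input $u = \lexp{\omega}h \bullet h^\omega$. In Step~1 the carry $q_j$ equals $1$ exactly when $a \leq h \leq a+b$, and $0$ otherwise; this value is independent of the index $j$, so Step~2 outputs the constant digit $h - a q_j + b q_j = h + (b-a)q_j$ at every position. For $0 \leq h \leq a-1$ we get $q_j = 0$ and output $h$, so each such $h$ is fixed; for $a \leq h \leq a+b-1$ we get $q_j = 1$ and output $h + (b-a) \neq h$ since $a > b$, so these letters are not fixed. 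Hence the set of fixed letters is precisely $\{0, \ldots, a-1\}$.

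Finally I would note that the hypothesis $b \leq d \leq a-1$ is exactly the condition placing both $d$ and $a+b-1-d$ inside $\{0, \ldots, a-1\}$: the inequality $a+b-1-d \leq a-1$ is equivalent to $d \geq b$, the bound $d \leq a-1$ is imposed directly, and $d \geq 0$ together with $a+b-1-d \geq 0$ hold automatically in this range. Therefore, for every $d$ with $b \leq d \leq a-1$, Corollary~\ref{PossibleShifts} applies and delivers parallel addition on $\mathcal{A}_{-d}$. There is no real obstacle beyond this bookkeeping; the entire content is the elementary fixed-letter computation, and the role of the assumption $a > b$ is precisely to keep the letters $a, \ldots, a+b-1$ from being fixed, which is what forces the lower endpoint $d \geq b$.
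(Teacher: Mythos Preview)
Your argument is correct and follows exactly the same route as the paper: feed the constant word $\lexp{\omega}h\bullet h^\omega$ into Algorithm~\textsl{GDE}($a/b$), observe that the fixed letters are precisely $\{0,\ldots,a-1\}$, and then invoke Corollary~\ref{PossibleShifts}. The only difference is that you spell out the fixed-letter computation and the equivalence $\{d,\,a+b-1-d\}\subset\{0,\ldots,a-1\}\iff b\le d\le a-1$ in more detail than the paper does.
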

\begin{proof}
Algorithm~\textsl{GDE}($a/b$)  applied to the bi-infinite sequence
$u=\lexp{\omega}h\bullet h^\omega$ gives the bi-infinite sequence
$\varphi(u) = \lexp{\omega}h\bullet h^\omega$ for any $h\in\{
0,\ldots, a-1\}$. Thus for any $d \in \{ b,b+1,\ldots, a-1\}$,
both letters $d$  and $a+b-1-d$ are fixed by $\varphi$. According
to Corollary~\ref{PossibleShifts}, the alphabet $\mathcal{A}_{-d}$
allows parallelism of addition.
\end{proof}

So the question is now: what happens for alphabets
$\mathcal{A}_{-d}$ when $d \ge a$ or $d \le b-1$?
First recall a well known fact.\\
\textbf{Fact 1}. Let $\lexp{\omega}0c_k \cdots c_0 \bullet c_{-1}
\cdots c_{-\ell}0^\omega $ and $\lexp{\omega}0d_k \cdots d_0
\bullet d_{-1} \cdots d_{-\ell}0^\omega  $, $k, \ell \ge 0$, be
two representations in base $\beta=a/b$ of the same number in
$\Z[\beta]$. Then the polynomial $(c_k-d_k)X^k + \cdots +
(c_0-d_0) + \cdots + (c_{-\ell}-d_{-\ell})X^{-\ell}$ is a multiple
of $bX-a$. Thus there exist $s_{k-1}, s_{k-2}, \ldots, s_0,
s_{-1},\ldots, s_{-\ell} \in \Z$ such that
\begin{equation}\label{fact1}
c_k-d_k=bs_{k-1}\,,
\end{equation}
\begin{equation}\label{fact2}
c_j-d_j=-as_j+bs_{j-1} \quad \hbox{for any} \ k-1 \ge j \ge
-\ell+1\,,
\end{equation}
\begin{equation}\label{fact3}
c_{-\ell}-d_{-\ell}=-as_{-\ell}\,.
\end{equation}

\begin{lemma}\label{non-local2}
Let $\mathcal{D}=\{m,\ldots,0,\ldots,M\}$ with $m \le -1$ and
$M\ge 1$, be an alphabet. If $M<b$ then the greatest digit
elimination from $\mathcal{D} \cup \{M+1\}$ to $\mathcal{D}$ is
not a local function; if $m >-b$, then the smallest digit
elimination from $ \{m-1\}\cup\mathcal{D}$ to $\mathcal{D}$ is not
a local function either.
\end{lemma}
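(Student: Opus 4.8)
The plan is to reduce both assertions to a single arithmetic fact and to prove that fact by an infinite descent resting on Fact~1. First I would dispose of the smallest-digit statement by reflection symmetry: by Proposition~\ref{opposite}, a parallel smallest digit elimination from $\{m-1\}\cup\mathcal{D}$ to $\mathcal{D}$ exists if and only if the negated conversion, from $(-\mathcal{D})\cup\{-m+1\}$ to $-\mathcal{D}$, is computable in parallel; the latter is a \emph{greatest} digit elimination for the alphabet $-\mathcal{D}=\{-M,\dots,-m\}$, whose largest letter $-m$ is $<b$ precisely when $m>-b$. Thus it suffices to treat greatest digit elimination under the hypothesis $M<b$.

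Second, I would test a hypothetical local GDE on a single letter. If it were $(t,r)$-local through $\Phi$, I apply the induced $\varphi$ to $u=\lexp{\omega}0(M+1)\bullet0^\omega$. Because a local conversion sends finite words to finite words we have $\Phi(0^p)=0$, so $v=\varphi(u)$ is supported on the fixed window $[-t,r]$ and, by Definition~\ref{digitsetconv}, is a $(\beta,\mathcal{D})$-representation of the number $M+1$. Hence it is enough to prove the purely arithmetic statement: \emph{if $M<b$, then $M+1$ has no finite $(\beta,\mathcal{D})$-representation at all} (so a fortiori none can be produced by a local, or indeed any, conversion).

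Third, the descent. Suppose $M+1=\sum_j v_j\beta^j$ with all $v_j\in\mathcal{D}$, and compare it through Fact~1 with the one-digit representation $c_0=M+1$, $c_{j\neq0}=0$ of the same value: the top relation \eqref{fact1} forces $v_k\equiv0\pmod b$ for the leading digit, which, being $\le M<b$, must therefore be negative, while the bottom relation \eqref{fact3} forces the lowest digit to be $\equiv0\pmod a$. The core is the case of non-negative support, where every tail value $V_s=\sum_{i\ge0}v_{s+i}\beta^i$ is an integer: indeed $V_s$ is an integer represented on non-negative positions, so reducing $V_s b^{k-s}=\sum_i v_{s+i}a^ib^{k-s-i}$ modulo $a$ gives $v_s\equiv V_s\pmod a$, whence $V_{s+1}=\tfrac{b}{a}(V_s-v_s)$ is again an integer (using $b\beta=a$). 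Since $V_0=M+1>M\ge v_0$, we have $a\mid(V_0-v_0)$ with $V_0-v_0\ge1$, so in fact $V_0-v_0\ge a$ and $V_1\ge b>M$; inductively $V_s>M$ for every $s$, contradicting the terminal value $V_k=v_k\le M$.

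The main obstacle is the contribution of negative positions, which are genuinely available once $m\le-a$ --- exactly the alphabets $\mathcal{A}_{-d}$ with $d\ge a$ to which the lemma is applied --- since there the forced congruence $v_{-\ell}\equiv0\pmod a$ has a nonzero solution in $\mathcal{D}$. After clearing denominators in the integer identity $\sum_{i=0}^n\tilde e_i a^i b^{n-i}=(M+1)a^\ell b^k$ the relevant value is no longer an integer, so the clean descent does not apply verbatim; the plan is to peel the low digits one by one using the mod-$a$ relations coming from \eqref{fact2} until the remaining tail is an integer exceeding $M$, at which point the previous paragraph applies. Making this simultaneous two-ended control precise --- the congruence mod $b$ at the top preventing a single carry from returning an overflow into $\{m,\dots,M\}$, and the congruence mod $a$ at the bottom preventing the descent from terminating --- is where the hypotheses $M<b$ and $\gcd(a,b)=1$ are indispensable, and is the delicate part of the argument.
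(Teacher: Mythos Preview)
Your reduction of the $m>-b$ case to the $M<b$ case via Proposition~\ref{opposite}, and the observation that a local GDE would in particular furnish a finite $(\beta,\mathcal{D})$-representation of $M+1$, are both sound and match the paper's framing. The descent argument you give for representations supported on non-negative positions is correct.

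The gap is that you do not complete the argument when the representation has digits at negative positions, and this is exactly the case that matters for the application (alphabets $\mathcal{A}_{-d}$ with $d\ge a$). You acknowledge this yourself: the ``two-ended control'' is described only as a plan, not carried out. Your tail-value descent does not extend as written, because with $\ell\ge1$ the quantity $V_0=\sum_{j\ge0}v_j\beta^j$ need not equal $M+1$ (it differs from it by $\sum_{j<0}v_j\beta^j$, which is not an integer), so the anchor $V_0=M+1$ that drives your induction is lost.

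The paper's argument avoids the case split entirely and is much shorter. Work directly with the integers $s_j$ supplied by Fact~1 for the pair of representations $(M+1)$ and $d_k\cdots d_0\bullet d_{-1}\cdots d_{-\ell}$. From the top, $d_k=-bs_{k-1}\le M<b$ forces $s_{k-1}\ge0$; then for each $j$ from $k-1$ down to $1$, $d_j=as_j-bs_{j-1}\le M<b$ together with $s_j\ge0$ gives $-bs_{j-1}<b$, hence $s_{j-1}\ge0$. At position $0$, $d_0=M+1+as_0-bs_{-1}\le M$ with $s_0\ge0$ forces $s_{-1}\ge1$. From the bottom, $d_{-\ell}=as_{-\ell}\le M<b\le a$ forces $s_{-\ell}\le0$; then for each $j$ from $-\ell+1$ up to $-1$, $d_j=as_j-bs_{j-1}\le M<b$ with $s_{j-1}\le0$ gives $as_j<b\le a$, hence $s_j\le0$. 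The two chains collide at $s_{-1}$, giving $1\le s_{-1}\le0$, a contradiction. This uses only $M<b$ (and $a\ge b$), handles all positions uniformly, and requires no separate treatment of the fractional part.
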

\begin{proof}
Suppose that $M<b$ and that the greatest digit elimination from
$\mathcal{D} \cup \{M+1\}$ to $\mathcal{D}$ is a $p$-local
function $\varphi$. Consider the digit $M+1$, and suppose that
$M+1$ has a representation on $\mathcal{D}$, of the form
$\lexp{\omega}0d_{k} \cdots d_0\bullet d_{-1}\cdots
d_{-\ell}0^\omega$, with $0 <k $, $0< \ell$ (the values $d_{k}=0$
and $d_{-\ell} =0$ are not excluded). By Fact 1, there exist
integers $s_j$ such that
\begin{itemize}
  \item $d_0= M+1+as_0-bs_{-1}$
  \item for $1 \le j \le k-1$, $d_j= as_j-bs_{j-1}$
    \item $d_k=-bs_{k-1}$
  \item for $1 \le j \le \ell-1$, $d_{-j}= as_{-j}-bs_{-j-1}$
  \item $d_{-\ell}= as_{-\ell}$.
\end{itemize}
Since $d_k=-bs_{k-1} \in \mathcal{D}$ and $M<b$, $s_{k-1} \ge 0$.
Then $d_{k-1}= as_{k-1}-bs_{k-2} \ge -bs_{k-2}\in \mathcal{D}$
implies that $s_{k-2} \ge 0$. Similarly, $s_{k-3} \ge 0$, \ldots,
$s_0 \ge 0$. Since $M\geq d_0= M+1+as_0-bs_{-1} \ge M+1 -bs_{-1}
$, we must have $1 -bs_{-1}\le 0$, hence $ s_{-1} \geq 1$. On the
other hand, $b> d_{-\ell}= as_{-\ell}\in \mathcal{D}$ implies that
$ s_{-\ell} \le 0$.  Then $b> d_{-\ell+1}= as_{-\ell+1}-bs_{-\ell}
\ge as_{-\ell+1}$ implies
$s_{-\ell+1} \le 0$. Similarly, $s_{-\ell+2} \le 0$, \ldots, $s_{-1} \le 0$, a contradiction.\\
The case $m <-b$ is analogous.
\end{proof}

\begin{corollary}
In base $\beta=a/b$, with $a$ and $b$ co-prime positive integers
such that $a > b \ge 1$, parallel addition is not possible on
alphabets of positive and negative digits not containing $\{-b,
\ldots,0, \ldots,b\}$.
\end{corollary}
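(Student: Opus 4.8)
The plan is to reduce parallel addition on such an alphabet to its two digit-elimination subproblems and then invoke the non-locality established in Lemma~\ref{non-local2}. Write the alphabet as $\mathcal{A}=\{m,\ldots,0,\ldots,M\}$. The phrase ``alphabet of positive and negative digits'' means $m\le -1$ and $M\ge 1$; since $\mathcal{A}$ is contiguous and contains $0$, it automatically contains $\{-1,0,1\}$, so the hypothesis of Proposition~\ref{conversion}, Point~\eqref{both sides}, is met. That proposition tells us that parallel addition on ${\rm Fin}_{\mathcal{A}}(\beta)$ holds if, and only if, \emph{both} the greatest digit elimination from $\mathcal{A}\cup\{M+1\}$ to $\mathcal{A}$ and the smallest digit elimination from $\{m-1\}\cup\mathcal{A}$ to $\mathcal{A}$ are computable in parallel, i.e. are local. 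So it suffices to break one of these two conversions.

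Next I would observe that the hypothesis ``$\mathcal{A}$ does not contain $\{-b,\ldots,0,\ldots,b\}$'' is equivalent to $M<b$ or $m>-b$, and split into these (not necessarily exclusive) cases. If $M<b$, then Lemma~\ref{non-local2} asserts that the greatest digit elimination from $\mathcal{A}\cup\{M+1\}$ to $\mathcal{A}$ is not a local function; combined with the ``only if'' direction of Proposition~\ref{conversion}, Point~\eqref{both sides}, this forbids parallel addition. Symmetrically, if $m>-b$, then Lemma~\ref{non-local2} shows the smallest digit elimination from $\{m-1\}\cup\mathcal{A}$ to $\mathcal{A}$ is not local, and again Proposition~\ref{conversion}, Point~\eqref{both sides}, rules out parallel addition. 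Since every alphabet of the excluded type falls into at least one of these two cases, the corollary follows.

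I expect essentially no obstacle here: once Lemma~\ref{non-local2} and Proposition~\ref{conversion} are available, the corollary is a purely logical combination of the equivalence (addition $\Leftrightarrow$ two digit eliminations) with the non-locality of one elimination. The genuine technical weight has already been spent in Lemma~\ref{non-local2}, whose sign-propagation argument on the coefficients $s_j$ coming from Fact~1 forces a contradiction exactly when the alphabet fails to reach $b$ (or $-b$). The only point worth checking carefully is the book-keeping that ties ``positive and negative digits, contiguous, containing $0$'' to the inclusion $\{-1,0,1\}\subset\mathcal{A}$ required by Proposition~\ref{conversion}, Point~\eqref{both sides}; this is immediate but must be stated so that the proposition may legitimately be applied.
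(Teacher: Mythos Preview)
Your proposal is correct and follows exactly the route the paper intends: the corollary is stated immediately after Lemma~\ref{non-local2} without a separate proof, and your argument makes explicit the two ingredients implicitly invoked there, namely the equivalence in Proposition~\ref{conversion}, Point~\ref{both sides}, and the non-locality of the relevant digit elimination from Lemma~\ref{non-local2}. Your care in verifying that $\{-1,0,1\}\subset\mathcal{A}$ and in translating ``does not contain $\{-b,\ldots,b\}$'' into $M<b$ or $m>-b$ is precisely the bookkeeping the paper takes for granted.
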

Note that in~\cite{FrPeSv} we have given an alphabet of the form
$\{-d, \ldots,0, \ldots,d\}$ on which parallel addition in base
$a/b$ is possible, with $d=\lceil \frac{a-1}{2}\rceil  +b$.

\bigskip

\begin{proposition}\label{non-loc}
Let $a$ and $b$ be co-prime positive integers such that $a > b \ge
1$.  The minimal alphabet of contiguous non-negative integers
containing $0$ allowing parallel addition in base $\beta=a/b$ is
$\mathcal{A} = \{0,\ldots,a+b-1\}$.
\end{proposition}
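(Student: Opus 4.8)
The plan is to prove minimality by reducing to a non-locality statement about greatest digit elimination and then exhibiting an unbounded, forced carry propagation along a block of the letter $a-1$.

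First, the upper bound is already available: by Proposition~\ref{rat-par}, parallel addition is possible on $\{0,\ldots,a+b-1\}$. For the lower bound I would show that no alphabet $\{0,\ldots,M\}$ with $M\le a+b-2$ works. Since $\mathrm{Fin}_{\{0,\ldots,M\}}(\beta)$ must be closed under addition, it must contain every non-negative integer; as the modified Euclidean algorithm needs the full digit set $\{0,\ldots,a-1\}$, closure already fails when $M\le a-2$, so it remains to treat $a-1\le M\le a+b-2$. By Proposition~\ref{conversion}(1), parallel addition on $\{0,\ldots,M\}$ is equivalent to the greatest digit elimination $\varphi$ from $\{0,\ldots,M+1\}$ to $\{0,\ldots,M\}$ being a local function, so it suffices to prove that this $\varphi$ cannot be $p$-local for any $p$.

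Assume $\varphi$ is $p$-local with memory $r$ and anticipation $t$, $p=r+t+1$, realized by $\Phi$. I would compare two inputs that coincide on every window of length $p$ lying in a long middle block, yet force different output digits there. Take $u$ to be the digit $a+b-1$ at position $0$ followed by the block $(a-1)^n$ at positions $1,\ldots,n$ (zeros elsewhere), and $u'$ the same block $(a-1)^n$ but with $0$ at position $0$. For interior positions $j$ with $r+1\le j\le n-t$ the sliding window sees only the letter $a-1$, so $\Phi$ assigns one and the same constant digit $c$ to the interior of $\varphi(u)$ and of $\varphi(u')$.

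The heart of the argument is to compute $c$ from each side using the carry relations of Fact~1, equations \eqref{fact1}--\eqref{fact3}: any two $\beta$-representations of the same value differ by an integer carry sequence $(s_j)$ with $c_j-d_j=-as_j+bs_{j-1}$. For the input $u$, the leading digit $a+b-1$ forces $s_0=0$ (taking $s_0\ge 1$ would recreate a digit $a+b-1>M$ at position $0$, and $s_0\le-1$ a negative digit); propagating upward, $s_{j-1}=0$ likewise forces $s_j=0$ on the whole block, pinning every interior digit to $b-1$, so $c=b-1$. For $u'$, the same block keeps the carry non-negative: inside it the carry can be neither incremented (that would create the digit $2a-1>M$) nor pushed below $0$, so any \emph{constant} reachable interior digit equals $(a-1)+(a-b)s$ with $s\ge 0$, whence $c\ge a-1$. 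Combining the two evaluations gives $b-1\ge a-1$, i.e.\ $b\ge a$, contradicting $a>b$. Hence $\varphi$ is not local, $\{0,\ldots,M\}$ admits no parallel addition, and $\{0,\ldots,a+b-1\}$ is minimal.

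The step I expect to be the main obstacle is exactly this forcing: because the alphabet is redundant, a priori the local rule could assign the same middle digit to both inputs by compensating elsewhere, so one must rule out every such reconciliation, not merely the ``natural'' representation. This is where the one-sided boundedness of the carry recurrence on a block of letters $a-1$ is indispensable — it is what simultaneously pins the interior digit of $u$ to $b-1$ and that of $u'$ to a value $\ge a-1$, making the two forced values incompatible.
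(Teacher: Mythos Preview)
Your overall strategy---compare the outputs of a putative $p$-local GDE on two inputs that agree on a long block of the letter $a-1$---matches the paper's, but the heart of your argument is wrong. You claim that for the input $u$ (digit $a+b-1$ at position $0$, then $(a-1)^n$) the interior output digit is \emph{forced} to be $b-1$, via ``$s_0=0$ is forced''. It is not. In the relation $d_0=(a+b-1)+as_0-bs_{-1}$ from Fact~1, the term $bs_{-1}$ can absorb the excess: take $s_{-1}=1$ and $s_j=0$ for all $j\neq-1$. Then $d_{-1}=a$, $d_0=d_1=\cdots=d_n=a-1$, and every digit lies in $\{0,\ldots,a+b-2\}$. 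So $u$ admits a representation with interior digit $a-1$, exactly the value you obtain for $u'$. A local rule could perfectly well output $a-1$ on the interior for \emph{both} inputs, and your contradiction evaporates. (There is also an internal inconsistency: if the carries $s_j$ really were all $0$, the interior digit would be $a-1$, not $b-1$.)

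The paper repairs this by changing the second string: instead of a single digit $a+b-1$ below the block, it appends the tail $(a+b-1)(a+b-2)^q$ with $q$ chosen so that $(a/b)^q\ge(a+b)/(a-b)$. Along that tail the inequality $(a+b-2)-d_j=-as_j+bs_{j-1}\ge 0$ forces $s_{j-1}\ge\frac{a}{b}s_j$, so the carry grows geometrically to at least $(a/b)^q$; meanwhile, from the zero region below, the carry is bounded by $(a+b)/(a-b)$. This quantitative growth, not a local pinning at a single position, is what produces the contradiction. Your string lacks this mechanism; a single digit $a+b-1$ cannot force anything, precisely because one extra carry step to position $-1$ neutralises it.
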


\begin{proof}  Let
us suppose that this statement is not valid, it means that there
exists a $p$-local function $\varphi: \mathcal{A}^\mathbb{Z} \to
\mathcal{B}^\mathbb{Z}$ which performs conversion in base $\beta$
from $\mathcal{A}$ to $\mathcal{B}$, where $\mathcal{B} =
\{0,\ldots,a+b-2\}$.

Let us fix  $ n \in \mathbb{N}$ and $q\in \mathbb{N}$ such that
$n>p$ and $\left(\frac{a}{b}\right)^q\geq \frac{a+b}{a-b}$.
 Then the image of $\lexp{\omega}0(a-1)^n\bullet
0^\omega$ by $\varphi$ can be written in the form
\begin{equation}\label{image1}\varphi( \lexp{\omega}0(a-1)^n\bullet 0^\omega) =
\lexp{\omega}0w_{h}w_{h-1}w_0\bullet w_{-1}w_{-2}\ldots w_{-\ell}
0^\omega\,,\end{equation}
 where  $w_h> 0$,  $w_{-\ell} \geq 0$ and  $ \ell
\geq 1$.

Consider now the string $\lexp{\omega}0(a-1)^n\bullet
(a+b-1)(a+b-2)^q0^\omega$.  Since $\varphi$ is a $p$-local
function and $n>p$, the image of this string coincides on the
positions  $j\geq p$ with the image of the string
$\lexp{\omega}0(a-1)^n\bullet 0^\omega$. Therefore we can write
\begin{equation}\label{image2}\varphi( \lexp{\omega}0(a-1)^n\bullet
(a+b-1)(a+b-2)^q0^\omega) = \lexp{\omega}0v_{h}v_{h-1}v_0\bullet
v_{-1}v_{-2}\ldots v_{-m} 0^\omega\,,
\end{equation}
where  $w_h= v_h> 0$, $v_{-m} \geq 0$  and $m > q+1$.

We will discuss the value of the index $h$ in the above
equalities.

\begin{description}

\item[Case $h\geq n$] \quad At first we focus on the equality
\eqref{image1} and apply  Fact 1 to the string
$\lexp{\omega}0(a-1)^n\bullet 0^\omega$ in the role of
$\lexp{\omega}0c_k \cdots c_0 \bullet c_{-1} \cdots
c_{-\ell}0^\omega $    and to the string
$\lexp{\omega}0w_{h}w_{h-1}w_0\bullet w_{-1}w_{-2}\cdots w_{-\ell}
0^\omega$ in the role of $\lexp{\omega}0d_k \cdots d_0 \bullet
d_{-1} \cdots d_{-\ell}0^\omega$ with  $k=h$.  As both strings
belong to  $\mathcal{B}^\mathbb{Z}$,  we obtain $0> -w_{h} =
bs_{h-1}$ which gives $s_{h-1}\leq -1$. By the same reason,  we
have  for all $j$ such that
  $ -\ell+1\leq j \leq h-1$ the inequality $$ a-1\geq  c_j-d_j = -as_j +bs_{j-1}$$  which gives
  the following
  implication
$$ s_j \leq -1 \quad \Longrightarrow \quad s_{j-1} \leq -1\,,\quad \hbox{for}\ \ -\ell+1\leq j \leq h-1\,.$$
In particular, $s_{-\ell} \leq -1$. Together  with \eqref{fact3},
 we have
$$ 0 \geq - d_{-\ell} = -as_{-\ell} \geq a \quad \hbox{- a contradiction.}
$$

\item[Case $h\leq n-1$] \quad Now we focus on the equality
\eqref{image2} and apply  Fact 1 to the string
$\lexp{\omega}0(a-1)^n\bullet (a+b-1)(a+b-2)^q0^\omega$ in the
role  of $\lexp{\omega}0c_k \cdots c_0 \bullet c_{-1} \cdots
c_{-\ell}0^\omega $    and to the string
$\lexp{\omega}0v_{h}v_{h-1}v_0\bullet v_{-1}v_{-2}\cdots v_{-m}
0^\omega$ in the role of $\lexp{\omega}0d_k \cdots d_0 \bullet
d_{-1} \cdots d_{-\ell}0^\omega$ with  $k=n-1$ and $\ell =m >
q+1$.  For the index $j=n-1$, Equality \eqref{fact1}  implies $
-b+1 \leq a-1 - d_{n-1} = bs_{n-2}$ and thus $s_{n-2} \geq 0$. For
indices  $j$, where  $n-2\geq j\geq 0$, Equality \eqref{fact2}
gives
$$ -b+1 \leq a-1-d_{j} = -a s_j + bs_{j-1} \leq a-1\,.$$
From the above  inequality, we can  derive the implication
$$ s_j \geq 0 \quad \Longrightarrow \quad s_{j-1} \geq 0\,,\quad \hbox{for}\ \ 0\leq j \leq n-2\,.$$
In particular, $s_{-1} \geq 0$. For the index $j=-1$,  Equality
\eqref{fact2} gives $$1\leq a+b-1-d_{-1} = -as_{-1}+bs_{-2}\quad
\hbox{ and thus } \ \ s_{-2}\geq 1\,.$$ For indices $-2\geq j \geq
-q-1$, we obtain
 $$0\leq a+b-2-d_j = -as_j+bs_{j-1} \quad
\Longrightarrow \quad  s_{j-1}\geq \tfrac{a}{b}s_{j}\,.$$
 In
particular,
 \begin{equation}\label{big}s_{-q-2} \geq \left(\tfrac{a}{b}\right)^qs_{-2}\geq
\left(\tfrac{a}{b}\right)^q\,.
\end{equation}
On the other hand, for the index $-\ell < -q-1$, Equality
\eqref{fact3} sounds  $-as_{-\ell} = -d_{-\ell} \geq -a-b+2$, and
thus $s_{-\ell}\leq 1$. For indices $j$ with $-\ell +1 \leq j\leq
-q-2$, one can deduce
$$ -a-b< -d_j = -as_j+bs_{j-1}\leq 0 \qquad  \Longrightarrow  \qquad s_j <1+ \tfrac{b}{a}
+\tfrac{b}{a}s_{j-1}\,.$$ The last  inequality  enables us to show
by induction  that \begin{equation}\label{small} s_j <
\tfrac{a+b}{a-b} \quad \hbox{for all} \   j \ \hbox{satisfying} \
\ -\ell\leq j \leq -q-2\,.\end{equation} Indeed, $s_{-\ell} \leq 1
< \tfrac{a+b}{a-b}$ and for all $j =-\ell+1, -\ell+2, \ldots,
-q-2$, we have
$$ s_j < 1+ \tfrac{b}{a} +\tfrac{b}{a}s_{j-1} <  1+
\tfrac{b}{a} +\tfrac{b}{a} \tfrac{a+b}{a-b} =
\tfrac{a+b}{a-b}\,.$$ Combining \eqref{small} for $j= -q-2$  and
\eqref{big}, we get
$$
\tfrac{a+b}{a-b} > s_{-q-2} \geq \left(\tfrac{a}{b}\right)^q \; ,
\quad \hbox{ a contradiction with the choice of $q$}.$$
\end{description}
Both  discussed cases lead  to contradiction, therefore a
$p$-local function $\varphi$ converting in base $\beta$  from the
alphabet  $\mathcal{A}$ to  $\mathcal{B}$ cannot exist.

\end{proof}

\begin{proposition}\label{non-local1}
In base $\beta=a/b$, with $a$ and $b$ co-prime positive integers
such that $a > b \ge 1$, parallel addition is not possible on any
alphabet $\{-d, \ldots,0,\ldots, a+b-d-2 \}$ for $ 1 \leq d \leq
a+b-3$, of cardinality $a+b-1$.
\end{proposition}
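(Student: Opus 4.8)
The plan is to use Proposition~\ref{conversion}, Point~\ref{both sides}. Since $1\le d\le a+b-3$ forces $\{-1,0,1\}\subset\mathcal{A}_{-d}$, parallel addition on $\mathcal{A}_{-d}=\{-d,\ldots,0,\ldots,a+b-d-2\}$ is equivalent to the simultaneous parallelizability of the greatest digit elimination (GDE) from $\{-d,\ldots,a+b-d-1\}$ to $\mathcal{A}_{-d}$ and of the smallest digit elimination (SDE) from $\{-d-1,\ldots,a+b-d-2\}$ to $\mathcal{A}_{-d}$. Hence it suffices to show that, for every admissible $d$, at least one of these two conversions fails to be local. I would split the range of $d$ into three overlapping parts.

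For the two extreme parts the bounds of Lemma~\ref{non-local2} apply immediately. When $1\le d\le b-1$ the minimal digit satisfies $m=-d>-b$, so the SDE is not local; when $a-1\le d\le a+b-3$ the maximal digit satisfies $M=a+b-d-2<b$, so the GDE is not local. (One of these can also be read off from the other via the opposite-alphabet symmetry of Proposition~\ref{opposite}.) In either situation Proposition~\ref{conversion} rules out parallel addition.

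The central part $b\le d\le a-2$ (non-empty only when $a\ge b+2$) is the real work, since there $\{-b,\ldots,b\}\subset\mathcal{A}_{-d}$ and Lemma~\ref{non-local2} is silent. Here I would re-run the probe-word argument of Proposition~\ref{non-loc}, now applied to the GDE from $\{-d,\ldots,a+b-d-1\}$ to $\mathcal{A}_{-d}$: assuming it is $p$-local, I compare the images of $\lexp{\omega}0(a-1-d)^n\bullet 0^\omega$ and of $\lexp{\omega}0(a-1-d)^n\bullet(a+b-d-1)(a+b-d-2)^q0^\omega$, which agree on all positions $\ge p$ once $n>p$. The repeated digit is deliberately taken to be $a-1-d$ (positive because $d\le a-2$, and lying in $\mathcal{A}_{-d}$): this keeps the crucial gap $\max(\text{input digit})-\min(\text{target digit})=(a-1-d)-(-d)=a-1$ exactly as in the non-negative case, so that Fact~1 yields the same sign-propagation recursions for the auxiliary integers $s_j$. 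The two cases $h\ge n$ and $h\le n-1$ for the top position $h$ of the converted word are then treated as in Proposition~\ref{non-loc}: in the case $h\le n-1$ the geometric blow-up $s_{-q-2}\ge(a/b)^q$ clashes with the uniform bound $s_j<\tfrac{a+b}{a-b}$ exactly as before, the shift by $d$ leaving these estimates intact; in the case $h\ge n$ the sign-propagation recursion drives $|s_{-\ell}|\ge 1$, whence the bottom relation $d_{-\ell}=as_{-\ell}$ gives $|d_{-\ell}|\ge a$, contradicting $\mathcal{A}_{-d}\subset(-a,a)$ (which holds precisely because $b\le d\le a-2$).

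The one genuinely new difficulty, and the step I expect to be the main obstacle, is the \emph{start} of the recursion in the case $h\ge n$. Over the one-sided alphabet of Proposition~\ref{non-loc} the leading digit $w_h$ of a positive number is automatically positive, immediately giving $s_{h-1}\le-1$; over the two-sided $\mathcal{A}_{-d}$ a positive number may perfectly well have a negative leading digit, so the sign of $w_h$ is a priori undetermined. The way around this is the divisibility relation $-w_h=bs_{h-1}$ coming from Fact~1, which forces $b\mid w_h$ and hence $|w_h|\ge b$: if $w_h>0$ then $s_{h-1}\le-1$ and the downward recursion $s_j\le-1\Rightarrow s_{j-1}\le-1$ proceeds as above, while if $w_h<0$ then $s_{h-1}\ge1$ and the symmetric recursion $s_j\ge1\Rightarrow s_{j-1}\ge1$ (valid exactly because $d\ge b$) propagates to the bottom and produces the same contradiction. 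Verifying that both sign branches close, and checking the boundary indices of Fact~1 for the shifted digits, is where the care is needed; once that is done, the three ranges together exhaust $1\le d\le a+b-3$ and the proposition follows.
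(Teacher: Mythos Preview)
Your proposal is correct and rests on the same core idea as the paper---the probe-word argument of Proposition~\ref{non-loc} with all digits shifted by $-d$---but you are considerably more explicit. The paper's own proof is a two-line sketch: it notes that parallel addition on $\mathcal{A}_{-d}$ would make the conversion from $\{-d-1,\ldots,a+b-1-d\}$ to $\mathcal{A}_{-d}$ a $p$-local function, and then simply declares the contradiction ``analogous to Proposition~\ref{non-loc}'' for the shifted probe words $\lexp{\omega}0(a-1-d)^n\bullet 0^\omega$ and $\lexp{\omega}0(a-1-d)^n\bullet(a+b-1-d)(a+b-2-d)^q0^\omega$, with no case split on $d$ and no discussion of the sign of the leading output digit. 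Your three-range decomposition, the appeal to Lemma~\ref{non-local2} at the extremes, and the two sign branches for $w_h$ in the case $h\ge n$ of the central range are all refinements you supply; they are exactly what is needed to make the ``analogous'' claim watertight over a two-sided target alphabet, whereas the paper leaves these checks to the reader. What you gain is a self-contained argument; what the paper gains is brevity.
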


\begin{proof}
If parallel addition was possible on $\{-d, \ldots,0,\ldots,
a+b-2-d\}$, then, by Proposition~\ref{conversion}, the conversion
$\varphi$ from $\{-d-1, \ldots, 0, \ldots, a+b-1-d\}$ to $\{-d,
\ldots, 0, \ldots, a+b-2-d\}$ would be a $p$-local function for
some $p$. The proof is then analogous to that of
Proposition~\ref{non-loc}, by considering the words
$\lexp{\omega}0(a-1-d)^n\bullet(a+b-1-d)(a+b-2-d)^q0^\omega$ and
$\lexp{\omega}0(a-1-d)^n\bullet 0^\omega$.
\end{proof}

Summarizing the results for both the cases of alphabets, either
with non-negative digits only, or with positive as well as
negative digits, we have proved that:

\begin{theorem}
Let $\beta = a/b$ be the base, with $a$ and $b$ co-prime positive integers such that $a > b \geq 1$, and let $\A$ be an alphabet (of contiguous integers containing~$0$) of the minimal cardinality allowing parallel addition in base $\beta$.\\
Then, $\A$ has cardinality $a+b$, and $\A$ has the form
\begin{itemize}
    \item $\A = \{0, \ldots, a+b-1\}$, or $\A = \{-a-b+1, \ldots, 0\}$, or
    \item $\A = \{-d, \ldots, 0, \ldots, a+b-1-d\}$ containing a subset $\{-b, \ldots, 0, \ldots, b\}$.
\end{itemize}
\end{theorem}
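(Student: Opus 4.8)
The plan is to assemble the results of this section into the stated characterization, treating separately the \emph{size} of a minimal alphabet and its possible \emph{shapes}. Throughout I would use that every alphabet of contiguous integers containing $0$ of a given cardinality $c$ equals $\{-d,\ldots,0,\ldots,c-1-d\}$ for a unique $d$ with $0\le d\le c-1$; such an alphabet is non-negative ($d=0$), non-positive ($d=c-1$), or mixed (otherwise), and in the mixed case $\{-1,0,1\}\subset\A$.

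First I would establish achievability of cardinality $a+b$ together with sufficiency of all three listed shapes. Proposition~\ref{rat-par} gives parallel addition on $\{0,\ldots,a+b-1\}$, so the minimal cardinality is at most $a+b$; Proposition~\ref{opposite} transfers this to the opposite alphabet $\{-a-b+1,\ldots,0\}$. For the mixed shapes, the proposition of this section treating $\mathcal{A}_{-d}=\{-d,\ldots,0,\ldots,a+b-1-d\}$ gives parallel addition whenever $b\le d\le a-1$, i.e. whenever $\A\supseteq\{-b,\ldots,0,\ldots,b\}$. This proves sufficiency of all the listed forms.

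Next I would prove that no alphabet of cardinality at most $a+b-1$ admits parallel addition, which yields the matching lower bound $a+b$. For a non-negative alphabet this is Proposition~\ref{non-loc}, and the non-positive case follows from it via Proposition~\ref{opposite}. For a mixed alphabet of cardinality exactly $a+b-1$ (necessarily $\{-d,\ldots,0,\ldots,a+b-2-d\}$ with $1\le d\le a+b-3$) non-existence is Proposition~\ref{non-local1}. To cover mixed alphabets of cardinality strictly below $a+b-1$ I would invoke monotonicity: since such an alphabet $\A$ contains $\{-1,0,1\}$, Point~1 of the corollary following Proposition~\ref{conversion} shows that parallel addition on $\A$ gives parallel conversion from \emph{any} alphabet into $\A$; choosing a mixed extension $\A'\supset\A$ of cardinality $a+b-1$, the conversion from $\A'+\A'$ into $\A\subset\A'$ is then a parallel addition on $\A'$, contradicting Proposition~\ref{non-local1}. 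Hence every working alphabet has at least $a+b$ digits.

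Combining the two halves, the minimal cardinality is exactly $a+b$, so a minimal alphabet is some $\{-d,\ldots,0,\ldots,a+b-1-d\}$; the cases $d=0$ and $d=a+b-1$ give the first listed form, while any \emph{mixed} minimal alphabet must, by the corollary forbidding mixed alphabets that omit $\{-b,\ldots,0,\ldots,b\}$, satisfy $b\le d\le a-1$, i.e. be of the second listed form. I expect the genuinely hard content to lie not in this bookkeeping but in the mixed lower bound: the non-existence statements for mixed alphabets rest on the delicate divisibility analysis of competing $\beta$-representations (Fact~1) carried out in Propositions~\ref{non-loc} and~\ref{non-local1}, and the monotonicity reduction above is the conceptual step that upgrades the single forbidden cardinality $a+b-1$ to all smaller mixed cardinalities.
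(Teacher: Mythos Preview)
Your assembly is correct and follows the paper's intent: the theorem is stated there as a summary of Propositions~\ref{rat-par}, \ref{non-loc}, \ref{non-local1}, the proposition on the alphabets $\mathcal{A}_{-d}$ with $b\le d\le a-1$, and the corollary after Lemma~\ref{non-local2}, together with Proposition~\ref{opposite}. Your monotonicity step (using Point~1 of the corollary after Proposition~\ref{conversion} to lift a hypothetical small mixed alphabet to one of size $a+b-1$ and then invoke Proposition~\ref{non-local1}) is a genuine addition: the paper's listed results explicitly exclude non-negative alphabets of size $<a+b$, mixed alphabets omitting $\{-b,\ldots,b\}$, and mixed alphabets of size exactly $a+b-1$, but do not directly treat mixed alphabets of size $<a+b-1$ that happen to contain $\{-b,\ldots,b\}$; your reduction closes that case cleanly.
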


When $b=1$, we find back the classical case of positive integer
base, see Section~\ref{positive}.


\subsection{Negative Rational Base}\label{rat-neg}


\begin{proposition}\label{rat-par-neg}
Parallel addition in base $\beta=-a/b$, with $a$ and $b$  co-prime
positive integers such that $a > b \ge 1$, is possible on any
alphabet (of contiguous integers) of the form $\mathcal{A}_{-d} =
\{ -d, \ldots, 0, \ldots, a+b-1-d \}$ with cardinality
$\#\mathcal{A}_{-d} = a+b$, where $d \in \{ 0, \ldots, a+b-1 \}$.
\end{proposition}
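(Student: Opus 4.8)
The plan is to follow exactly the scheme used for the negative integer base in Theorem~\ref{neg-par0}: first treat the non-negative alphabet $\mathcal{A}_0=\{0,\dots,a+b-1\}$ by exhibiting a parallel greatest digit elimination, then propagate the result to every shift $\mathcal{A}_{-d}$ by means of Corollary~\ref{PossibleShifts}. The arithmetic backbone is the identity $b\beta=-a$, equivalently
\[
a\beta^{j}+b\beta^{j+1}=0\qquad\text{for every }j\in\mathbb{Z},
\]
which tells us that a carry $q$ created at position $j$ may subtract $a$ from the digit $z_j$ provided it simultaneously subtracts $b$ from the digit $z_{j+1}$, without altering the represented value. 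Thus every value-preserving local rewriting will take the form $z_j^{\mathrm{new}}=z_j-a\,q_j-b\,q_{j-1}$, where $q_j$ is a carry computed from a bounded window around position~$j$.

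First I would write \textsl{GDE}$(-a/b)$, a $p$-local conversion from $\{0,\dots,a+b\}$ to $\mathcal{A}_0$, using carries $q_j\in\{-1,0,1\}$. The baseline rule sets $q_j=1$ on the surplus digit $z_j=a+b$ (so that $w_j:=z_j-aq_j=b$). Because the incoming contribution $-b\,q_{j-1}$ is now negative when $q_{j-1}=1$, a small digit hit by such a carry would underflow; this is repaired by a negative carry $q_j=-1$ (net effect $z_j\mapsto z_j+a-b$, which lies in $\{1,\dots,a+b-1\}$ since $a>b$), triggered precisely when $z_j$ is small and its lower neighbour $z_{j-1}$ is large. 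Symmetrically, a digit near the top that receives $+b$ (from $q_{j-1}=-1$) is brought back by $q_j=1$ (net $z_j\mapsto z_j-a+b$, again in range because $2b-1\le a+b-1$). The neighbour conditions must be chosen so that, on a constant input $\lexp{\omega}h\bullet h^\omega$ with $0\le h\le a+b-1$, no rule fires: for $h<a$ no digit reaches the carrying threshold $a$, while for $a\le h\le a+b-1$ the positive rule is blocked by requiring the lower neighbour to be small (it equals $h\ge a$) and the negative rule never fires because $z_j=h$ is not small. One then checks, case by case over the possible window contents, that $z_j^{\mathrm{new}}\in\{0,\dots,a+b-1\}$ for every combination of $q_j,q_{j-1}$ that can occur; value invariance is the displayed identity, and the property that $q_j\neq0$ forces a non-zero digit inside the window guarantees that a block of zeros is never turned into non-zeros. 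By Proposition~\ref{conversion} this already yields parallel addition on $\mathcal{A}_0$.

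Finally I would lift the result to arbitrary shifts. The design above is arranged so that \emph{every} letter $h\in\{0,\dots,a+b-1\}$ is fixed by the conversion function $\varphi$, i.e.\ $\Phi(h^p)=h$; hence for each $d\in\{0,\dots,a+b-1\}$ both letters $d$ and $K-1-d=a+b-1-d$ (with $K=a+b$) are fixed, and Corollary~\ref{PossibleShifts} delivers parallel addition on $\mathcal{A}_{-d}=\{-d,\dots,0,\dots,a+b-1-d\}$ for the full range of $d$ --- in contrast with the positive base $a/b$, where the extreme shifts were impossible. The \textbf{main obstacle} is the construction and verification of \textsl{GDE}$(-a/b)$ itself: because the own-digit coefficient $a$ and the neighbour coefficient $b$ differ (with $a>b\ge1$), the carry thresholds and the neighbour conditions have to be balanced so that an incoming $\mp b$ is always absorbable \emph{within a fixed finite window}, and simultaneously so that all $a+b$ constant sequences remain fixed --- this last requirement being exactly what makes Corollary~\ref{PossibleShifts} applicable for every $d$, and thus what upgrades mere parallel addition on one alphabet to parallel addition on all alphabets of cardinality $a+b$.
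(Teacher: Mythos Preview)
Your proposal is correct and follows essentially the same route as the paper: construct a \textsl{GDE}$(-a/b)$ conversion from $\{0,\dots,a+b\}$ to $\{0,\dots,a+b-1\}$ with carries $q_j\in\{-1,0,1\}$ governed by the rule $z_j^{\mathrm{new}}=z_j-aq_j-bq_{j-1}$, arrange the thresholds so that every constant sequence $\lexp{\omega}h\bullet h^\omega$ with $0\le h\le a+b-1$ is fixed, and then invoke Corollary~\ref{PossibleShifts} to reach all shifts $\mathcal{A}_{-d}$. The paper's explicit thresholds are $q_j=1$ when $z_j=a+b$ or when $a\le z_j\le a+b-1$ and $z_{j-1}\le b-1$, and $q_j=-1$ when $z_j\le b-1$ and $z_{j-1}\ge a$, which is exactly the balance you describe qualitatively.
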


\begin{proof}
Firstly, we show that parallel addition is possible on the
alphabet $\mathcal{A}_{0} = \{ 0, \ldots, a+b-1 \}$, by providing
an algorithm for the greatest digit elimination
\textsl{GDE}($-a/b$): $\{ 0, \ldots, a+b \} \rightarrow \{ 0,
\ldots, a+b -1\}$:

\vskip0.3cm \hrule \vskip0.2cm

\noindent {\bf Algorithm~\textsl{GDE}($-a/b$)}:  Base $\beta =
-a/b$, with $a > b \geq 1$, $a$ and $b$ co-prime positive
integers, parallel conversion (greatest digit elimination) from
$\{ 0, \ldots, a+b \}$ to $\{ 0, \ldots, a+b -1\}$.

\vskip0.2cm \hrule \vskip0.2cm

\noindent{\sl Input}: a finite sequence of  digits $(z_j)$ of $\{ 0, \ldots, a+b \}$, with $z = \sum z_j \beta^j$.\\
{\sl Output}: a finite sequence of digits $(z_j)$ of $\{ 0,
\ldots, a+b -1\}$, with $z = \sum z_j \beta^j$.

\vskip0.2cm

\noindent\texttt{for each $j$ in  parallel do}\\
1. \hspace*{0.5cm} \texttt{case}
    $\left\{\begin{array}{l}
        z_j = a+b \ \\
        a \le z_j \le a+b-1 \ \hbox{ \texttt{and}} \ 0 \le z_{j-1} \le b-1   \\
    \end{array} \right\} $\ \texttt{then} \, $q_j:=1$\\
   \hspace*{1cm} \texttt{if} \quad \ \,\,\, $0 \le z_j \le b-1  $ \texttt{and} \ $a \le z_{j-1} \le a+b $ \qquad \quad \ \texttt{then} \, $q_j:=-1$\\[2mm]
   \hspace*{1cm} \texttt{else} \qquad \qquad \qquad \qquad \qquad \qquad \qquad \qquad \qquad \qquad \qquad \quad $q_j:=0$\\
2. \hspace*{0.5cm} $z_j := z_j - a q_j - b q_{j-1}$

\vskip0.2cm \hrule \vskip0.2cm


Using our usual notion of $w_j = z_j - a q_j$, and $z_j^{new} =
w_j - b q_{j-1}$, we describe the various cases that can occur
during the course of this algorithm:
\begin{itemize}
    \item If $z_j = a+b$, we obtain $w_j = b$, and then $z_j^{new} \in b - b \cdot \{ -1, 0, 1 \} = \{ 0, b, 2b \} \subset \{ 0, \ldots, a+b-1 \} = \mathcal{A}_0$.
    \item For $z_j \in \{ a, \ldots, a+b-1 \} $ and $z_{j-1} \in \{ 0, \ldots, b-1 \}$, we have $q_j = 1$, and consequently $w_j \in \{ 0, \ldots, b-1 \}$. As $q_{j-1} \in \{ -1, 0 \}$, we finally get $z_j^{new} \in \{ 0, \ldots, b-1 \} - b \cdot \{ -1, 0 \} = \{ 0, \ldots, 2b-1 \} \subset \{ 0, \ldots, a+b-2 \} \subset \mathcal{A}_0$.
    \item For $z_j \in \{ a, \ldots, a+b-1 \} $ and $z_{j-1} \in \{ b, \ldots, a+b \}$, we have $q_j = 0$, so we keep $w_j \in \{ a, \ldots, a+b-1 \}$. Now $q_{j-1} \in \{ 0, 1 \}$, and thus $z_j^{new} \in \{ a, \ldots, a+b-1 \} - b \cdot \{ 0, 1 \} = \{ a-b, \ldots, a+b-1 \} \subset \{ 1, \ldots, a+b-1 \} \subset \mathcal{A}_0$.
    \item In the case of $z_j \in \{ b, \ldots, a-1 \}$, simply $q_j = 0$, $w_j \in \{ b, \ldots, a-1 \}$, and the resulting $z_j^{new} \in \{ b, \ldots, a-1 \} - b \cdot \{ -1, 0, 1 \} \subset \{ 0, \ldots, a+b-1 \} = \mathcal{A}_0$.
    \item When $z_j \in \{ 0, \ldots, b-1 \}$ and $z_{j-1} \in \{ 0, \ldots, a-1 \}$, we have $q_j = 0$, so we keep $w_j \in \{ 0, \ldots, b-1 \}$, and $q_{j-1} \in \{ -1, 0 \}$. Therefore, we obtain $z_j^{new} \in \{ 0, \ldots, b-1 \} - b \cdot \{ -1, 0 \} = \{ 0, \ldots, 2b-1 \} \subset \{ 0, \ldots, a+b-2 \} \subset \mathcal{A}_0$.
    \item Lastly, when $z_j \in \{ 0, \ldots, b-1 \}$ and $z_{j-1} \in \{ a, \ldots, a+b \}$, by means of $q_j = -1$ we get $w_j \in \{ a, \ldots, a+b-1 \}$. As $q_{j-1} \in \{ 0, 1 \}$, then $z_j^{new} \in \{a, \ldots, a+b-1\} - b \cdot \{ 0, 1 \} = \{ a-b, \ldots, a+b-1 \} \subset \mathcal{A}_0$.
\end{itemize}

Again, we must not forget to mention that the digit $z_j = 0$ is
transformed by this algorithm onto another digit (by means of $q_j
\neq 0$) only if its neighbour $z_{j-1}$ is a non-zero, namely
from the set $\{ a, \ldots, a+b \}$; thereby, it is ensured that
the algorithm cannot assign a string of non-zeros to a string of
zeros. The output value $z$ is equal to the input value $z$, since
the base $\beta$ fulfils the equality $b \beta^{j+1} + a \beta^j =
0$ for any $j \in \mathbb{Z}$. Thus, we see that this algorithm is
correct for the greatest digit elimination from the alphabet
$\mathcal{A}_0 \cup \{ a+b \}$ into $\mathcal{A}_0 = \{ 0, \ldots, a+b-1 \}$.\\

Now let us point out that all the elements $d \in \{ 0, \ldots,
a+b-1 \}$ are fixed by the $p$-local function $\varphi$ given by
this algorithm, in the sense that $\varphi(^{\omega}d \bullet
d^{\omega}) = (^{\omega}d \bullet d^{\omega})$. This fact,
together with Corollary~\ref{PossibleShifts}, implies that
parallel addition in the negative rational base $\beta = -a/b$ is
possible on any alphabet of the form $\mathcal{A}_{-d} = \{ -d,
\ldots, 0, \ldots, a+b-1-d \}$, with cardinality $\#
\mathcal{A}_{-d} = a+b$.
\end{proof}

Also here in the negative case $\beta=-a/b$, for $b=1$ we find
back the classical case of (negative) integer base, see
Section~\ref{negative}.

\bigskip

Since we do not have any lower bound for this base, we must find
one directly.

\begin{proposition}\label{rat-par-negMIN}  Let $\mathcal{A} =
\{m, \ldots, 0, \ldots,  M\}$ with $m\leq 0\leq M$ be an alphabet
of contiguous integers which enables parallel addition in base
$\beta=-a/b$, with $a$ and $b$ co-prime positive integers, $a > b
\ge 1$. Then $\#\mathcal{A}\geq a+b$.
\end{proposition}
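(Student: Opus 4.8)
The plan is to mirror the proof of the first part of Theorem~\ref{zdola}, with the integer $|f(1)|$ replaced by $a+b$. The key observation is that, although $\beta=-a/b$ is not an algebraic integer, Claim~\ref{divides1} was established under the sole hypotheses that $\beta$ is an algebraic number and that the digit set lies in $\mathbb{Z}[\beta]$, so it applies verbatim here. Thus, letting $\Phi$ denote the function realizing the $p$-local conversion from $\mathcal{A}+\mathcal{A}$ to $\mathcal{A}$ that exists by hypothesis, I would first record that $\Phi(x^p)-x\in(\beta-1)\mathbb{Z}[\beta]$ for every $x\in\mathcal{A}+\mathcal{A}$.

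The heart of the argument, and the step where the value $a+b$ actually appears, is a computation inside the ring $\mathbb{Z}[\beta]$. First I would note that $\mathbb{Z}[\beta]=\mathbb{Z}[1/b]$: the inclusion $\subseteq$ is clear, while the reverse follows from a B\'ezout identity $ua+vb=1$, which gives $1/b=-u\beta+v\in\mathbb{Z}[\beta]$. Since $\beta-1=-(a+b)/b$ and $-1/b$ is a unit of $\mathbb{Z}[1/b]$, we obtain $(\beta-1)\mathbb{Z}[\beta]=(a+b)\mathbb{Z}[1/b]$. The crucial point is then the lemma $(a+b)\mathbb{Z}[1/b]\cap\mathbb{Z}=(a+b)\mathbb{Z}$: indeed, if an integer $n$ equals $(a+b)r/b^k$ with $r\in\mathbb{Z}$, then $(a+b)\mid nb^k$, and because $\gcd(a+b,b)=\gcd(a,b)=1$ this forces $(a+b)\mid n$. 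As $\Phi(x^p)-x$ is an integer lying in $(\beta-1)\mathbb{Z}[\beta]$, I conclude the congruence $\Phi(x^p)\equiv x \pmod{a+b}$ for all $x\in\mathcal{A}+\mathcal{A}$, which is the exact analogue of Claim~\ref{divides2} for this non-integer base.

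To finish, I would note that the hypothesis and the conclusion are invariant under $\mathcal{A}\mapsto-\mathcal{A}$ by Proposition~\ref{opposite}, so we may assume $M=\max\mathcal{A}\ge 1$; then $M+1\in\mathcal{A}+\mathcal{A}$ while $M+1\notin\mathcal{A}$. Applying the congruence to $x=M+1$, the digit $\Phi((M+1)^p)$ lies in $\mathcal{A}=\{m,\ldots,M\}$, is therefore at most $M$ and at least $m$, and is congruent to $M+1$ modulo $a+b$. Since it is strictly smaller than $M+1$, the difference $M+1-\Phi((M+1)^p)$ is a positive multiple of $a+b$, so $\Phi((M+1)^p)\le M+1-(a+b)$; combining this with $\Phi((M+1)^p)\ge m$ gives $\#\mathcal{A}=M-m+1\ge a+b$, as claimed.

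The main obstacle is precisely the ring-theoretic lemma $(\beta-1)\mathbb{Z}[\beta]\cap\mathbb{Z}=(a+b)\mathbb{Z}$: it is here that the coprimality $\gcd(a+b,b)=1$ must be exploited to pin down the modulus as $a+b$ rather than merely a divisor of it. Once this is in place, the remainder is a routine transcription of the integer-base argument.
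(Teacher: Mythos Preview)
Your proposal is correct and follows essentially the same approach as the paper: both apply Claim~\ref{divides1} to $x=M+1$ (after reducing to $M\ge 1$), extract from $\Phi(x^p)-x\in(\beta-1)\mathbb{Z}[\beta]$ the divisibility $(a+b)\mid \Phi(x^p)-x$ via $\gcd(a+b,b)=1$, and conclude by the size estimate. The only difference is packaging: the paper clears denominators by multiplying the relation $y-x=(\beta-1)\sum_k c_k\beta^k$ through by $-b^{n+1}$, whereas you phrase the same step as the ideal identity $(\beta-1)\mathbb{Z}[\beta]\cap\mathbb{Z}=(a+b)\mathbb{Z}$ after noting $\mathbb{Z}[\beta]=\mathbb{Z}[1/b]$.
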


\begin{proof}  Without loss of generality, we may assume  that  $1\leq M$. Let $\varphi$ be a $p$-local function realizing parallel
conversion  from $\mathcal{A}\cup\{M+1\}$ into $\mathcal{A}$ using
the mapping $\Phi: (\mathcal{A}\cup\{M+1\})^p \to \mathcal{A}$.
Put  $x=M+1$ and $y=\Phi(x^p)$.   According to Claim
\ref{divides1}, we have
\begin{equation}\label{multipl}
y-x= \left(-\tfrac{a}{b}-1\right) \sum_{k=0}^n c_k
\bigl(-\tfrac{a}{b}\bigr)^k\quad \text{for some }\ n \in
\mathbb{N}\ \text{and } \ \ c_k \in \mathbb{Z}.
\end{equation}
Multiplying the previous equation by $-b^{n+1}$ one gets
$$(x-y)b^{n+1} = (a+b)\sum_{k=0}^n c_k(-a)^kb^{n-k}, $$
and consequently, the number $a+b$ divides $(x-y)b^{n+1}$. As $a$
and $b$ are co-prime, necessarily  $a+b$ divides $x-y$. Since
$x-y>0$, there  exists $k \in \mathbb{N}$ such that $x-y =
k(a+b)\geq a+b$. But simultaneously,  $x-y\leq M+1-m = \#
\mathcal{A}$. Putting these two inequalities together, we obtain
$a+b\leq \#\mathcal{A}$.
\end{proof}
We can summarize this section into the following theorem.

\begin{theorem}
In base $\beta=-a/b$, with $a$ and $b$ co-prime positive integers,
$a > b \ge 1$, parallel addition can be performed in  any alphabet
$\mathcal{A}$ of contiguous integers containing $0$ with
cardinality  $\# \mathcal{A} =a+b$. This cardinality cannot be
reduced.
\end{theorem}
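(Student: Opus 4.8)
The plan is to assemble the statement directly from the two propositions that immediately precede it, since both the achievability and the optimality have already been established. The only preliminary observation I need is a classification: any alphabet $\mathcal{A}$ of contiguous integers containing $0$ with $\#\mathcal{A} = a+b$ is necessarily of the form $\mathcal{A}_{-d} = \{-d, \ldots, 0, \ldots, a+b-1-d\}$ for a unique $d$ with $0 \le d \le a+b-1$. Indeed, writing $\mathcal{A} = \{m, \ldots, M\}$ with $m \le 0 \le M$ and $M - m + 1 = a+b$, one simply sets $d = -m$, which automatically lies in the required range.

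With this normal form in hand, the positive direction is immediate. First I would invoke Proposition~\ref{rat-par-neg}, which asserts that parallel addition in base $\beta = -a/b$ is performable on \emph{every} alphabet $\mathcal{A}_{-d}$ with $d \in \{0, \ldots, a+b-1\}$. Combined with the classification above, this shows that parallel addition is possible on any alphabet of contiguous integers containing $0$ of cardinality $a+b$, as claimed.

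For the minimality, I would apply Proposition~\ref{rat-par-negMIN}, which states that any alphabet of contiguous integers containing $0$ enabling parallel addition in base $-a/b$ must satisfy $\#\mathcal{A} \geq a+b$. Hence no alphabet of smaller cardinality can work, and the value $a+b$ cannot be reduced.

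At this level there is no genuine obstacle remaining: the entire difficulty has been absorbed into the two cited propositions. The substantive work lives there instead---on the constructive side, in Algorithm~\textsl{GDE}($-a/b$) together with the fixed-letter shifting argument supplied by Corollary~\ref{PossibleShifts}, which transports parallel addition from $\mathcal{A}_0$ to every shifted alphabet $\mathcal{A}_{-d}$; and on the lower-bound side, in the divisibility computation of Proposition~\ref{rat-par-negMIN}, where one uses Claim~\ref{divides1} to show that $a+b$ divides $x - y$ and thereby forces $\#\mathcal{A} \geq a+b$. Accordingly, the present theorem is purely a synthesis, and its proof consists of these two citations preceded by the normal-form observation.
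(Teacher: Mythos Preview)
Your proposal is correct and matches the paper's approach: the paper presents this theorem explicitly as a summary of the section, with no separate proof, relying entirely on Proposition~\ref{rat-par-neg} for achievability and Proposition~\ref{rat-par-negMIN} for minimality. Your normal-form observation that every such alphabet is some $\mathcal{A}_{-d}$ is the only glue needed, and the paper takes this as evident.
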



\section{Conclusions and comments}


Here is a summary of the numeration systems studied in this paper.
We have considered only alphabets of contiguous integers
containing $0$.

\bigskip

\begin{center}
\begin{tabular}{|l |l |p{6cm}|}
\hline
  Base & Canonical alphabet &  Minimal alphabet for parallel addition\\
\hline \hline
 $b \ge 2$ integer  &$\{0, \ldots,b-1\}$  & All alphabets of size $b+1$ \\
 \hline
 $-b$, $b \ge 2$ integer  &$\{0, \ldots,b-1\}$  & All alphabets of size $b+1$  \\

 \hline
 $\sqrt[k]{b}$, $b \ge 2$ integer  &  & All alphabets of size $b+1$ \\

 \hline
  $-1 + \imath$ & $\{0, 1\}$ & All alphabets of size $5$\\
  \hline
  $2 \imath$ & $\{0, \ldots,3\}$ & All alphabets of size $5$\\
  \hline
  $\imath \sqrt{2} $ & $\{0, 1\}$ & All alphabets of size $3$\\
  \hline
   $\beta^2=a \beta-1$ & $\{0, \ldots,a-1\}$ & All alphabets of size $a$\\
   \hline
  $\beta^2=a \beta+1$ & $\{0, \ldots,a\}$ & All alphabets of size $a+2$  \\
  \hline
   $a/b$ & $\{0, \ldots,a-1\}$ &  $\{0,\ldots, a+b -1\}$, $\{-a-b+1,\ldots, 0\}$, and
  all alphabets of size $a+b$ containing $\{-b, \ldots,0, \ldots,b\}$ \\
  \hline
   $-a/b$ &  $\{0, \ldots,a-1\}$ &   All alphabets of size $a+b$ \\
\hline
\end{tabular}
\end{center}

\bigskip

\medskip

Generalization of these results to other bases remains an open
problem. The cases of the Tribonacci numeration system with basis
satisfying the equation $X^3=X^2+X+1$, or quadratic bases
satisfying the equation $X^2=aX \pm b$, $b \ge 2$, are not so
straightforward. The reason is that we have only two tools so far,
namely Theorem~\ref{zdola} and Theorem~\ref{zdola2}, which provide
us with lower bounds on the cardinality of the alphabet. For the
bases listed in the summary table, the bounds given in these
theorems were attained, the only exception being the rational
bases $\beta = \pm a/b$, for which we had to refine our methods
specifically in order to prove minimality of the alphabets. These
examples show that, for attacking the question of minimality of
alphabet for other bases, we need to find stronger versions of
Theorem~\ref{zdola} and Theorem~\ref{zdola2}.

The positive rational base $\beta = a/b$ is exceptional among our
results by another property as well. Contrary to the other bases,
not all alphabets of contiguous integers (containing~$0$) with
sufficiently large cardinality allow parallel addition.

As mentioned in Remark \ref{kblock}, for alphabets which are too
small to allow parallel addition in a given base, one can consider
a more general concept of the so-called $k$-block $p$-local
functions. It means that, instead of a base $\beta$ and an
alphabet $\mathcal{A}$, we consider addition in base $\beta^k$ and
on the alphabet $\mathcal{A}_k=\{\sum_{j=0}^{k-1}
a_j\beta^j\,\vert \, a_j \in \mathcal{A}\}$. Our interest in
addition in  base $\beta$ can be extended to the question: What is
the minimal size of an alphabet $\mathcal{A}$ and the minimal size
$k$ of the blocks such that addition can be performed by a
$k$-block $p$-local function. This question was not tackled here
at all. In \cite{Kornerup}, the precise definition of $k$-block
$p$-local function and a relation between $\mathcal{A}$ and $k$
can be found.


\section*{Acknowledgements}


We acknowledge financial support by the Czech Science Foundation
grant GA\v{C}R 201/09/0584, and by the grants MSM6840770039 and
LC06002 of the Ministry of Education, Youth, and Sports of the
Czech Republic.



\end{document}